\documentclass{sn-jnl}
\usepackage{algorithm}%
\usepackage{algorithmicx}%
\usepackage{algpseudocode}%
\usepackage{amsmath, amsfonts, amssymb, amsthm, bm, stmaryrd,  mathtools}
\usepackage{enumitem}
\usepackage[title]{appendix}%
\usepackage{booktabs}%
\usepackage{comment}
\usepackage{graphicx} 
\usepackage{listings}%
\usepackage{multirow}%
\usepackage{mathrsfs}%
\usepackage{manyfoot}%
\usepackage{textcomp}%
\usepackage{tikz}
\usepackage{tikz-cd}
\usepackage{times}
\usepackage{xcolor}%
\usepackage{xspace}

\tikzset{commutative diagrams/.cd}
\tikzcdset{row sep/tiny=0.12 em}

\usetikzlibrary{intersections}

\newlength\myheight

        \usetikzlibrary{calc}

\theoremstyle{thmstyleone}%

\newtheorem{Def}{Definition}[section]
\newenvironment{definition}{\begin{Def} \rm}{\end{Def}}
\newtheorem{lemma}[Def]{Lemma}
\newtheorem{proposition}[Def]{Proposition}
\newtheorem{corollary}[Def]{Corollary}
\newtheorem{theorem}[Def]{Theorem}
\newtheorem{example}[Def]{Example}
\newtheorem{remark}[Def]{Remark}

\newtheorem{Construction}[Def]{Construction}
\newenvironment{construction}{\begin{Construction} \rm}{\end{Construction}}

\newtheorem{claim}{Claim}[Def]

\newcommand{\set}[2]{\{#1\;|\;#2\}}

\newcommand{\conjun}{\mathrel{\wedge}}

\newcommand{\Cat}[1]{\ensuremath{\mathbf{#1}}}

\newcommand{\sai}[1]{[\,#1\,]}
\newcommand{\EndoHom}[1]{\mathcal{E}{\kern-.5ex}\textit{n}{\kern-.2ex}\textit{d}{\kern-.2ex}\textit{o}(#1)}

\newcommand{\sqdot}{\begin{tikzpicture}[baseline={(current bounding box.south)}, scale=0.3]
    \draw[line width=0.5pt] (-0.3, -0.3) rectangle (0.3, 0.3);
    \fill (0, 0) circle (1.5pt);
\end{tikzpicture}}

\newcommand{\sqtilde}{\begin{tikzpicture}[baseline={(current bounding box.south)}, scale=0.3] 
    \draw[line width=0.5pt] (-0.35, -0.35) rectangle (0.35, 0.35);
    
    \node[inner sep=0pt, outer sep=0pt] at (0,0) {$\thicksim$}; 
\end{tikzpicture}}

\begin{document}

\title[On $\mathscr{T}$-based orthomodular dynamic algebras]{On $\mathscr{T}$-based orthomodular dynamic algebras}


\author*[1]{\fnm{Jan} \sur{Paseka}}\email{paseka@math.muni.cz}
\equalcont{These authors contributed equally to this work.}

\author[1,2]{\fnm{Juanda Kelana} \sur{Putra}}\email{juandakelanaputra@walisongo.ac.id}
\equalcont{These authors contributed equally to this work.}

\author[1]{\fnm{Richard} \sur{Smolka}}\email{394121@mail.muni.cz}
\equalcont{These authors contributed equally to this work.}

\affil*[1]{\orgdiv{Department of Mathematics and Statistics}, \orgname{Masaryk University}, \orgaddress{\street{Kotl\'a\v rsk\'a 2}, \city{611\,37 Brno}, 
\country{Czech Republic}}}

\affil[2]{\orgdiv{Department of Mathematics}, \orgname{%
Walisongo State Islamic  University}, 
\country{Indonesia}}



\abstract{This paper establishes a categorical equivalence between the category
$\mathbb{COL}$ of complete orthomodular lattices and the category 
$\mathscr{T}\mathbb{ODA}$ of $\mathscr{T}$-based orthomodular dynamic algebras.
Complete orthomodular lattices serve as the static algebraic foundation for
quantum logic, modeling the testable properties of quantum systems. In
contrast, $\mathscr{T}$-based orthomodular dynamic algebras, which are
specialized unital involutive quantales, formalize the composition and
quantum-logical properties of quantum actions.

This result refines prior connections between orthomodular lattices and dynamic algebras, provides a constructive bridge between static and dynamic quantum logic perspectives, and extends naturally to Hilbert lattices and broader quantum-theoretic structures.}

\keywords{Orthomodular dynamic algebra, complete orthomodular lattice, quantale, quantale module}



\maketitle

\section{Introduction}\label{sec1}

Quantum mechanics fundamentally challenges classical
intuitions about logic and measurement. Unlike classical systems, which are
governed by Boolean logic, quantum systems adhere to non-Boolean structures.
This discrepancy motivates the development of mathematical frameworks capable
of capturing both quantum propositions and their dynamics.

Two complementary perspectives have emerged in this endeavor. The first, rooted in the work of Birkhoff and von Neumann, employs complete orthomodular lattices to formalize the testable properties of quantum systems. These lattices provide a static, algebraic foundation for quantum logic, encoding the structure of propositions and their orthogonality relations. The second perspective, developed more recently through dynamic epistemic logic and quantum dynamic algebras, emphasizes the operational and transformational aspects of quantum theory---how quantum actions compose, interact, and modify the state of a system.

The dynamic approach to quantum logic was pioneered by 
\cite{BaltagSmets2005}. They introduced quantum dynamic logic to reason about measurements, operations, and information flow, extending classical dynamic logic to the non‑Boolean quantum setting. Building on this foundation, 
\cite{KRSZ} developed the theory of orthomodular dynamic algebras, providing an algebraic counterpart to Baltag and Smets' logical framework and establishing initial connections to orthomodular lattices.

This paper establishes a precise categorical equivalence between these two viewpoints. Specifically, we demonstrate that the category of complete orthomodular lattices is equivalent to the category of $\mathscr{T}$-based orthomodular dynamic algebras, a specialized class of unital involutive quantales designed to formalize quantum actions. The equivalence shows that the static lattice-theoretic view and the dynamic, quantale-based view are categorically equivalent, providing a direct bridge between these perspectives.

Our approach builds upon and refines the work of 
\cite{KRSZ}, which established initial links between orthomodular lattices and dynamic algebras. We extend their results by introducing a more flexible framework based on involutive submonoids of the Foulis quantale of linear maps. For a complete orthomodular lattice $\mathcal{M} = (M, \leq, (-)^\perp)$, we construct an involutive submonoid $L_{\mathcal{M}}$ satisfying 
\[
\{\pi_m \mid m \in M\} \subseteq L_{\mathcal{M}} \subseteq \mathbf{Lin}(M),
\]
where $\pi_m$ denotes the Sasaki projection onto $m$ and 
$\mathbf{Lin}(\mathcal M)$ is the Foulis quantale of linear endomaps on 
$\mathcal{M}$. This intermediate choice between Sasaki projections and all linear maps provides fine-grained control over the resulting dynamic algebra structure while maintaining full compatibility with the orthomodular lattice operations.

The free unital involutive quantale $\mathscr{P}(L_{\mathcal{M}})$ of all subsets of $L_{\mathcal{M}}$, equipped with setwise composition and an appropriately defined orthocomplement operator ${\sim}$, yields a $\mathscr{T}$-based orthomodular dynamic algebra whose ``test set'' 
is naturally identified with the elements of the original lattice $\mathcal{M}$. This construction is functorial, and we establish that the resulting functors between the categories $\mathbb{COL}$ and $\mathscr{T}\mathbb{ODA}$ form an equivalence via natural isomorphisms in both directions. In doing so, we clarify how dynamics emerge from orthomodular lattices and strengthen the connection with unital involutive quantales, providing a comprehensive algebraic foundation for the dynamic quantum logic initiated by Baltag and Smets.

The significance of this equivalence extends beyond pure category theory. Complete orthomodular lattices encompass several important classes of quantum-logical structures, including Hilbert lattices---the lattices of closed subspaces of Hilbert spaces that arise directly from the mathematical formalism of quantum mechanics. Our equivalence therefore provides a robust categorical link between quantales and a broad spectrum of quantum-theoretic frameworks, from abstract quantum logic to concrete operator algebras. This connection facilitates the transfer of results, techniques, and intuitions between these traditionally distinct areas of study, and demonstrates that the dynamic perspective on quantum logic advocated by Baltag and Smets is not merely a convenient formalism but is categorically equivalent to the classical static perspective.

Methodologically, our contribution lies in providing an explicit, constructive proof of this equivalence that emphasizes conceptual clarity and computational feasibility. By working directly within the Foulis quantale $\mathbf{Lin}(\mathcal{M})$ and carefully controlling the generator scheme $L_{\mathcal{M}}$, we obtain a streamlined demonstration of minimality and a transparent interaction between the orthocomplement operator ${\sim}$ and Sasaki projections. The approach simplifies prior arguments and facilitates computational implementation and extensions to broader structures.

The paper is organized as follows. Section~\ref{sec:preliminaries} reviews the necessary background on quantales, involutive structures, Foulis quantales, and complete orthomodular lattices, establishing notation and recalling essential results. Section~\ref{igedyal} introduces involutive generalized dynamic algebras and develops their fundamental properties, including the crucial module structure that connects dynamics to the test set. Section~\ref{sectionTODA} defines the category $\mathscr{T}\mathbb{ODA}$ of $\mathscr{T}$-based orthomodular dynamic algebras and establishes their key structural characteristics. Subsequently, 
we provide a concrete instantiation of the abstract functor $\mathscr{T}$, demonstrating how our framework subsumes and generalizes prior results in the literature. Section~\ref{sec:construction-gamma} constructs the functor $\Gamma: \mathbb{COL} \to \mathscr{T}\mathbb{ODA}$ that transforms orthomodular lattices into dynamic algebras. Section~\ref{sec:functor-psi} presents the reverse functor $\Psi: \mathscr{T}\mathbb{ODA} \to \mathbb{COL}$ and proves the natural isomorphisms that complete the categorical equivalence. 

Throughout, we assume familiarity with basic category theory, lattice theory, and the fundamentals of quantum logic. Readers seeking additional background on quantales and orthomodular lattices are directed to the comprehensive treatments by \cite{KP} and \cite{Kalmbach83}, respectively.

\section{Preliminaries}\label{sec:preliminaries}

The initial phase of this work entails the standardization of notation and a review of the foundational structures. 

\subsection{Quantales and related structures}

We begin by reviewing the concepts of involutive semigroups,
quantales, and related structures. Quantales provide a dynamic perspective for
analyzing complex systems, enabling us to model the evolution of a system and
the structure of quantum actions that govern these changes.

\medskip 
\begin{definition}[Semigroup involution]
Let $(S,\odot)$ be a semigroup. An \emph{involution} on $S$ is a map ${}^{*}:S\to S$ satisfying the following properties for all $x,y\in S$:
\begin{itemize}
    \item $(x^{*})^{*} = x$,
    \item $(x \odot y)^{*} = y^{*} \odot x^{*}$.
\end{itemize}

A semigroup equipped with an involution is called an \emph{involutive semigroup}.
\end{definition}

\medskip
\begin{definition}
A \emph{quantale} is a tuple $\mathcal{Q}=(Q,\bigsqcup,\odot)$, where $Q$ 
is a complete join-semilattice and $\odot$ is a binary operation
on $Q$ satisfying the following properties for all $a,b,c\in Q$ and 
$S\subseteq Q$: 
\begin{enumerate}[label=(\textbf{Q\arabic*}),leftmargin=1.57905cm] \item \emph{Associativity:} $a\odot(b\odot c)
= (a\odot b)\odot c$, 
\item \emph{Left distributivity:} $a\odot(\bigsqcup S) =
\bigsqcup_{s\in S}(a\odot s)$, 
\item \emph{Right distributivity:} $(\bigsqcup
S)\odot a = \bigsqcup_{s\in S}(s\odot a)$. 
\end{enumerate}

A quantale $\mathcal{Q}$ is called {\em unital} if there exists an element $e\in Q$ such that for every $a\in Q$ the equalities $a\odot e=a$ and $e\odot a=a$ hold. We say that $e$ is a {\em unit} of $\mathcal{Q}$. A {\em quantale congruence } on a 
quantale $\mathcal{Q}$ is an equivalence on $Q$ that preserves 
arbitrary joins and multiplication.

By an {\em involutive quantale} we mean a quantale 
$\mathcal{Q}$ equipped with a semigroup involution \(^{*}\) on $Q$ satisfying
\[
\Bigl(\bigsqcup_{i\in I} a_i\Bigr)^{*} = \bigsqcup_{i\in I} a_i^{*}
\]
for all \(a_i\in Q, i \in I\), where $I$ is an arbitrary index set \emph{(possibly empty)}. In particular, we adopt the standard convention that the least element and the greatest element of the complete join-semilattice $Q$ are defined as the empty join and the join of all elements, respectively:
\[\bigsqcup_{i\in \varnothing} a_i \;=\; \bigsqcup \varnothing \;=:\; 0\quad
\text{ and }\quad \bigsqcup Q \;=:\; 1.
\]

Hence, the above definition yields

\[
0^{*}=0\quad
\text{ and }\quad 1^{*}=1.
\]

Moreover, if an involutive quantale is unital with unit $e$ then 
$e=e^{*}$.

A {\em homomorphism of unital involutive quantales} is a 
map $h\colon Q_1 \to Q_2$ between unital involutive quantales 
that preserves arbitrary joins, multiplication, unit and involution.
\end{definition}
\medskip
\begin{remark}
Although we assume that $(Q,\bigsqcup)$ is a complete join-semilattice, this
already determines a complete lattice order by $a\sqsubseteq b \iff a\bigsqcup b=b$.
With respect to this order, arbitrary meets exist and are given by
\[
\bigsqcap S \;=\; \bigsqcup\{\,x\in Q \mid \forall s\in S,\ x\sqsubseteq s\,\}.
\]
We nevertheless emphasize the join-semilattice structure since axioms
{\rm\textbf{(Q2)}--\textbf{(Q3)}} are exactly the distributivity of $\odot$ over
arbitrary joins in each argument.
\end{remark}

\medskip

\begin{definition}\label{keymodule} Given a unital quantale 
$\mathcal{Q}=(Q, \bigsqcup,\odot, e)$, a {\em left unital ${\mathcal{Q}}$-module} 
		is a triple $\mathcal{A}=(A,\bigvee,\bullet)$ such that $(A,\bigvee)$ is a complete $\bigvee$-semilattice and an {\em action} $\bullet\colon Q\times A\longrightarrow A$ is a map satisfying:
        \medskip 
	\begin{enumerate}[label=(\textbf{A\arabic*}),leftmargin=1.57905cm]
		\item $v \bullet (\bigvee S)=\bigvee_{s\in S}(v \bullet s)$ for every $S\subseteq A$ and every $v \in Q$.
		\item $(\bigsqcup T)\bullet a=\bigvee_{t\in T}(t\bullet a)$ for every $T\subseteq Q$ and every $a \in A$.
		\item $u\bullet(v\bullet a)=(u\odot v)\bullet a$ for every $u,v \in Q$ and every $a\in A$.
		\item $e \bullet a=a$ for all $a\in A$ (unitality).\\
	\end{enumerate}
Clearly, every unital quantale 
$\mathcal{Q}$ is a left ${\mathcal{Q}}$-module.
    A {\em homomorphism of left ${\mathcal{Q}}$-modules} is a 
map $g\colon A_1 \to A_2$ between ${\mathcal{Q}}$-modules $\mathcal{A}_1$ and 
$\mathcal{A}_2$
that preserves arbitrary joins and action.
		\end{definition}
Right quantale modules are defined analogously:
\medskip

\begin{definition}\label{keymodule-right}
Given a unital quantale $\mathcal{Q}=(Q,\bigsqcup,\odot,e)$, a {\em right unital ${\mathcal{Q}}$-module}
is a triple $\mathcal{A}=(A,\bigvee,\bullet)$ such that $(A,\bigvee)$ is a complete $\bigvee$-semilattice and an {\em action}
\[
\bullet \colon A\times Q\longrightarrow A
\]
is a map satisfying:
\medskip
\begin{enumerate}[label=(\textbf{A\arabic*}),leftmargin=1.57905cm]
\item $(\bigvee S)\bullet v=\bigvee_{s\in S}(s\bullet v)$ for every $S\subseteq A$ and every $v\in Q$.
\item $a\bullet(\bigsqcup T)=\bigvee_{t\in T}(a\bullet t)$ for every $T\subseteq Q$ and every $a\in A$.
\item $(a\bullet u)\bullet v=a\bullet(u\odot v)$ for every $u,v\in Q$ and every $a\in A$.
\item $a\bullet e=a$ for all $a\in A$ (unitality).\\
\end{enumerate}
Clearly, every unital quantale $\mathcal{Q}$ is a right ${\mathcal{Q}}$-module under its multiplication.
A {\em homomorphism of right ${\mathcal{Q}}$-modules} is a map $g\colon A_1\to A_2$
between right ${\mathcal{Q}}$-modules $\mathcal{A}_1$ and $\mathcal{A}_2$
that preserves arbitrary joins and the action, i.e.
\[
g\Big(\bigvee S\Big)=\bigvee_{s\in S}g(s)
\quad\text{and}\quad
g(a\bullet v)=g(a)\bullet v.
\]
\end{definition}

In the 1960s, David Foulis introduced a new mathematical structure, initially known as a ``Baer $\ast$-semigroup'' and later often termed a ``Foulis semigroup.'' This structure was fundamentally inspired by the properties of the multiplicative semigroup of bounded operators on a Hilbert space. Foulis quantales are exactly those unital involutive quantales that also possess the structural properties of Foulis semigroups.

\medskip

\begin{definition} \cite{BPS2025} \label{defFoulis}
    A {\em Foulis quantale} is a unital involutive quantale $\mathcal{Q}=(Q,\bigsqcup, \odot, ^*, e)$
together with an endomap $\sai{-} \colon Q\rightarrow Q$ satisfying 
the following conditions for all $s,x \in Q$:
\medskip
\begin{enumerate}[label=(\textbf{FQ\arabic*}),leftmargin=1.57905cm]
\item $\sai{s}$ is a self-adjoint idempotent, {i.e.},~$\sai{s} \odot \sai{s} = \sai{s} = \sai{s}^{*}$,

\item $0\, {=} \, \sai{e}$,

\item $s\odot x = 0$ if and only if there exists ${y\in Q} \text{ such that }{x = \sai{s}\odot y}$,

\end{enumerate}
\medskip
where $0=\bigsqcup \emptyset$. For an arbitrary $t\in Q$ put $t^{\perp}
  \,\smash{\stackrel{\textrm{def}}{=}}\, \sai{t^{*}} \in \sai{Q}$.
  Hence from~(\textbf{FQ1}) we get equations $t^{\perp} \cdot t^{\perp} =
  t^{\perp} = (t^{\perp})^{*}$. %
  We will call elements of $\sai{Q}$ {\em Sasaki projections}. 
  Since the unary operations $\sai{-} \colon Q\rightarrow Q$ 
  and ${}^{\perp}$ are interdefinable with $\sai{t}=(t^{*})^{\perp}$, we prefer to use the operation ${}^{\perp}$ as the key one.
  \end{definition}
\medskip
\begin{remark}
For a unital involutive quantale $\mathcal{Q}$ and $s,x \in Q$, we write $s\perp x$ if and only if $s^{*}\odot x=0$.
\end{remark}
  \medskip
Recall the following characterization of  Foulis quantales.
\medskip

\begin{proposition}\label{remFouldef} 
A unital involutive quantale $\mathcal{Q}=(Q,\bigsqcup, \odot, ^*, e)$ is a Foulis quantale with an endomap $\sai{-} \colon Q\rightarrow Q$ if and only if 
there is  an endomap $-\!^{\perp}\colon Q\to Q$ satisfying 
the following conditions for all $s,x \in Q$:
\begin{enumerate}[label={\rm (\textbf{O\arabic*})},leftmargin=1.57905cm]
    \item $s^{\perp}$ is a self-adjoint idempotent, i.e.,  $s^{\perp}\odot s^{\perp}\ =\ s^{\perp}\ =\ \big{(}s^{\perp}\big{)}^{*}$.
    \item $0\!=\!e^{\perp}$,
    \item $s \perp x$ if and only if there exists $y \in Q$ such that $x = s^{\perp}\odot y$.
\end{enumerate}
\end{proposition}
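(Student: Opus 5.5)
The plan is to translate directly between the two endomaps using the involution. Given a Foulis quantale with $\sai{-}$, I define $s^{\perp} := \sai{s^{*}}$. Conversely, given an endomap ${}^{\perp}$ satisfying (\textbf{O1})--(\textbf{O3}), I define $\sai{s} := (s^{*})^{\perp}$. Since $s^{**}=s$, these two assignments are mutually inverse, so the correspondence is a bijection between the two kinds of endomaps. It then suffices to check that each set of axioms transforms into the other.

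For the direction (\textbf{FQ}) $\Rightarrow$ (\textbf{O}):
\begin{itemize}
\item (\textbf{O1}) is (\textbf{FQ1}) applied to the element $s^{*}$.
\item For (\textbf{O2}), I use $e^{*}=e$, recalled in the definition of unital involutive quantales. This gives $e^{\perp}=\sai{e^{*}}=\sai{e}=0$ by (\textbf{FQ2}).
\item For (\textbf{O3}), note that $s\perp x$ means $s^{*}\odot x=0$. By (\textbf{FQ3}) applied to $s^{*}$, this holds iff $x=\sai{s^{*}}\odot y$ for some $y\in Q$, i.e.\ iff $x=s^{\perp}\odot y$.
\end{itemize}

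The converse direction is symmetric:
\begin{itemize}
\item (\textbf{FQ1}) is (\textbf{O1}) applied to $s^{*}$.
\item (\textbf{FQ2}) follows from $\sai{e}=(e^{*})^{\perp}=e^{\perp}=0$.
\item For (\textbf{FQ3}), I rewrite $s\odot x=0$ as $(s^{*})^{*}\odot x=0$, which says $s^{*}\perp x$. By (\textbf{O3}) applied to $s^{*}$, this is equivalent to $x=(s^{*})^{\perp}\odot y=\sai{s}\odot y$ for some $y$.
\end{itemize}

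There is no real obstacle here. The only points needing care are the consistent use of $x^{**}=x$ and $e^{*}=e$, and making sure that the relation $\perp$ is unpacked with the involution on the correct side. This matters because (\textbf{FQ3}) concerns $s\odot x$, whereas (\textbf{O3}) concerns $s^{*}\odot x$.
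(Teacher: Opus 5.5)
Your proposal is correct and matches the paper's proof: it likewise sets $s^{\perp}:=\sai{s^{*}}$ in one direction and $\sai{s}:=(s^{*})^{\perp}$ in the other, and checks each axiom by applying the corresponding one at $s^{*}$, using $e^{*}=e$ for (\textbf{O2}) and (\textbf{FQ2}). Your added remark that the two assignments are mutually inverse (since $s^{**}=s$) is a small bonus that the paper leaves implicit.
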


\begin{proof}
$(\Rightarrow)$ Suppose first that $\mathcal Q$ is a Foulis quantale with the map $\sai{-}: Q \to Q$ as in Definition~\ref{defFoulis}. Define the map ${-}^{\perp}: Q\to Q$ by setting
\[
s^{\perp} := \sai{s^{*}}.
\]
We verify \textup{(\textbf{O1})\,--\,(\textbf{O3})}.

\smallskip\noindent
\textup{(\textbf{O1}):} From the definition of a Foulis quantale we have
\[
s^{\perp}\odot s^{\perp}
=\sai{s^{*}}\odot\sai{s^{*}}
=\sai{s^{*}}
=s^{\perp},
\]
and similarly
\[
(s^{\perp})^{*}
=\big(\sai{s^{*}}\big)^{*}
=\sai{s^{*}}
=s^{\perp}.
\]
Hence $s^{\perp}$ is a self-adjoint idempotent.

\smallskip\noindent
\textup{(\textbf{O2}):} By the definition of a Foulis quantale we have $\sai{e}=0$. Thus
\[
e^{\perp}=\sai{e^{*}}=\sai{e}=0.
\]

\smallskip\noindent
\textup{(\textbf{O3}):} Condition (\textbf{FQ3})  in the definition of a Foulis quantale states
\[
s\odot x=0 \quad\iff\quad \exists y\colon x=\sai{s}\odot y.
\]
Replacing $s$ by $s^{*}$, this becomes
\[
s^{*}\odot x=0 \quad\iff\quad \exists y\colon x=\sai{s^{*}}\odot y.
\]
Using the definition $s^{\perp}=\sai{s^{*}}$ and the notation $s\perp x\iff s^{*}\odot x=0$, we get precisely
\[
s\perp x \quad\iff\quad \exists y\colon x=s^{\perp}\odot y.
\]

\medskip\noindent
$(\Leftarrow)$ Conversely, assume we have an involutive unital quantale $\mathcal Q$ together with an endomap ${-}^{\perp}:Q\to Q$ satisfying 
\textup{(\textbf{O1})\,--\,(\textbf{O3})} above. Define the map $\sai{-}: Q\to Q$ by
\[
\sai{s}:=(s^{*})^{\perp}.
\]
We verify (\textbf{FQ1})--(\textbf{FQ3})  from Definition~\ref{defFoulis}.

\smallskip\noindent
\textup{(\textbf{FQ1}):} By \textup{(\textbf{O1}):} we have
\[
\sai{s}\odot \sai{s}
=(s^{*})^{\perp}\odot (s^{*})^{\perp}
=(s^{*})^{\perp}
=\sai{s},
\qquad
(\sai{s})^{*}
=\big((s^{*})^{\perp}\big)^{*}
=(s^{*})^{\perp}
=\sai{s}.
\]
Thus $\sai{s}$ is a self-adjoint idempotent.

\smallskip\noindent
\textup{(\textbf{FQ2}):}  By \textup{(\textbf{O2}):} we get
\[
\sai{e}=(e^{*})^{\perp}=e^{\perp}=0.
\]

\smallskip\noindent
\textup{(\textbf{FQ3}):}  Finally, \textup{(\textbf{O3}):} says that 
\[
s\perp x\iff\exists y\colon x=s^{\perp}\odot y.
\]
Recalling $s\perp x\iff s^{*}\odot x=0$, replace $s$ by $s^{*}$ to obtain
\[
s\odot x=0\iff\exists y\colon x=(s^{*})^{\perp}\odot y=\sai{s}\odot y.
\]

Thus, the conditions (\textbf{FQ1})--(\textbf{FQ3})  are satisfied, completing the equivalence.
\end{proof}

Recall the following theorem. 

\medskip

\begin{theorem} \label{th4th5}{\rm\cite[Theorem 4, Theorem 5]{BPS2025}} Let 
$\mathcal Q$ be a Foulis quantale. Then the relation $\leq$ defined as $s\leq t$ if and only if $s=t\odot s$ for all $s, t\in {Q}$, satisfies 
$$\begin{array}{rcl}
r^{*}\odot t=0
& \Longleftrightarrow &
t=r^{\perp}\odot t \Longleftrightarrow t\leq r^{\perp},
\end{array}\eqno{(*)\phantom{**}}$$
$$\begin{array}{rclcl}
t\leq  r
& \Longrightarrow &
r^{\perp}\leq t^{\perp} &\text{and}&
k^{\perp\perp}=k,
\end{array}\eqno{(**)\phantom{*}}$$
$$\begin{array}{rcl}
t\leq  r^{\perp}
& \Longleftrightarrow &
r\leq t^{\perp} 
\end{array}\eqno{(***)}$$
for all $t, r\in Q$ and $k\in \sai{Q}$.

Moreover,  $\sai{Q}$ is a left $Q$-module   with 
order $\leq$ restricted to $\sai{Q}$, top element $e=e^{\perp\perp}=0^{\perp}$, 
join $\bigvee$ defined as 
$\bigvee S=\left(\bigsqcup S\right)^{\perp\perp}$ for all 
$S\subseteq \sai{Q}$ and 
action $\bullet$ defined as 
$u\bullet k=(u\odot k)^{\perp\perp}$ for all $u\in Q$ and $k\in \sai{Q}$.

$\sai{Q}$ is  also a right $\Cat{2}$-module where $\Cat{2}$ is the two element unital quantale with quantale module action $\circ$ defined as \[
\circ\colon \sai{Q}\times {\mathbf 2}\to \sai{Q},\quad x\circ b = \begin{cases}
    x&\text{if  } b=e,\\
    0&\text{if  } b=0.
\end{cases}
\]
\end{theorem}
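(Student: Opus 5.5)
The plan is to derive everything from the characterization in Proposition~\ref{remFouldef}, i.e.\ from (\textbf{O1})--(\textbf{O3}), using only the involution, distributivity over joins and the fact that $0=\bigsqcup\varnothing$ is absorbing ($0\odot x=x\odot 0=0$ by (\textbf{Q2})--(\textbf{Q3}) applied to the empty join).

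\emph{Step 1: the equivalences $(*)$.} By (\textbf{O3}), $r^{*}\odot t=0$ holds iff $t=r^{\perp}\odot y$ for some $y$. If $t=r^{\perp}\odot y$, then idempotence (\textbf{O1}) gives $r^{\perp}\odot t=r^{\perp}\odot r^{\perp}\odot y=t$. Conversely, $t=r^{\perp}\odot t$ supplies the witness $y=t$. The last equivalence in $(*)$ is just the definition of $\leq$.

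\emph{Step 2: $(***)$, the order on $\sai{Q}$, and $(**)$.}
\begin{itemize}
\item For $(***)$, apply $(*)$ and the involution: $t\leq r^{\perp}$ iff $r^{*}\odot t=0$ iff $t^{*}\odot r=(r^{*}\odot t)^{*}=0$ iff $r\leq t^{\perp}$.
\item On self-adjoint idempotents the relation $\leq$ is a partial order. Reflexivity holds by idempotence. For antisymmetry, $p=q\odot p$ and $q=p\odot q$ give $p=p^{*}=(q\odot p)^{*}=p\odot q=q$. Transitivity holds on all of $Q$, since $a=b\odot a$ and $b=c\odot b$ give $a=c\odot a$.
\item Reflexivity $r^{\perp}\leq r^{\perp}$ and $(***)$ give $r\leq r^{\perp\perp}$ for every $r$.
\item If $t\leq r$, then $t=r\odot t=r^{\perp\perp}\odot r\odot t=r^{\perp\perp}\odot t$, so $t\leq r^{\perp\perp}$, hence $r^{\perp}\leq t^{\perp}$ by $(***)$.
\item Antitonicity applied to $r\leq r^{\perp\perp}$ gives $r^{\perp\perp\perp}\leq r^{\perp}$. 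Together with $r^{\perp}\leq r^{\perp\perp\perp}$ and antisymmetry this yields $r^{\perp}=r^{\perp\perp\perp}$, i.e.\ $k^{\perp\perp}=k$ for $k\in\sai{Q}$.
\item For $y\in Q$ and $k\in\sai{Q}$ we then get $y\leq k$ iff $y^{\perp\perp}\leq k$. The implication from $y^{\perp\perp}\leq k$ uses $y\leq y^{\perp\perp}$. The converse applies antitonicity twice and uses $k^{\perp\perp}=k$.
\end{itemize}

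\emph{Step 3: $\sai{Q}$ is a complete lattice with top $e$.}
\begin{itemize}
\item Since $p=e\odot p$, $e$ is the top. From $0^{*}\odot e=0$ we get $e\leq 0^{\perp}$, so $0^{\perp}=e$. By (\textbf{O2}) also $e^{\perp\perp}=0^{\perp}=e$.
\item Let $S\subseteq\sai{Q}$ and $u=\bigsqcup S$. First, $u^{*}\odot u^{\perp}=0$ by $(*)$. Since $u^{*}=\bigsqcup_{s\in S}s^{*}$, distributivity and $0$ being least give $s^{*}\odot u^{\perp}=0$ for every $s\in S$. Hence $u^{\perp}\leq s^{\perp}$, i.e.\ $s\leq u^{\perp\perp}$ by $(***)$, so $u^{\perp\perp}$ is an upper bound.
\item For leastness, note that $k^{\perp}\odot k=(k^{*}\odot k^{\perp})^{*}=0$. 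If $s\leq k$ for all $s\in S$, then $k^{\perp}\odot s=k^{\perp}\odot k\odot s=0$, hence $k^{\perp}\odot u=0$. By $(*)$ this gives $u\leq k^{\perp\perp}=k$, and then $u^{\perp\perp}\leq k$ by Step~2.
\end{itemize}

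\emph{Step 4: the module axioms.} The key lemma, which I expect to be the main obstacle, is
\[
(x\odot y^{\perp\perp})^{\perp\perp}=(x\odot y)^{\perp\perp}\quad\text{for all }x,y\in Q .
\]
\begin{itemize}
\item By $(*)$, $a^{\perp}$ is the largest projection $p$ with $a^{*}\odot p=0$. So it suffices to show that, for projections $p$, $(x\odot y)^{*}\odot p=0$ iff $(x\odot y^{\perp\perp})^{*}\odot p=0$.
\item Taking adjoints, this is $p\odot x\odot y=0$ iff $p\odot x\odot y^{\perp\perp}=0$.
\item With $a=p\odot x$, the condition $a\odot y=0$ means $y\leq (a^{*})^{\perp}$ by $(*)$. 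By the last item of Step~2 this is equivalent to $y^{\perp\perp}\leq (a^{*})^{\perp}$, i.e.\ to $a\odot y^{\perp\perp}=0$.
\end{itemize}
A similar argument shows $(\bigsqcup_i x_i)^{\perp\perp}=\bigl(\bigsqcup_i x_i^{\perp\perp}\bigr)^{\perp\perp}$: both are the least projection above all $x_i$, by Step~3 and the last item of Step~2.

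With these two facts, (\textbf{A1})--(\textbf{A3}) reduce to distributivity and associativity of $\odot$ inside $Q$ followed by applying $(-)^{\perp\perp}$. For example, $u\bullet(v\bullet k)=(u\odot(v\odot k)^{\perp\perp})^{\perp\perp}=(u\odot v\odot k)^{\perp\perp}$. (\textbf{A4}) is $e\bullet k=k^{\perp\perp}=k$.

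Finally, the right $\Cat{2}$-module structure is checked directly: $x\circ e=x$, $x\circ 0=0$, and joins in either argument are preserved casewise (an empty join of elements of $\Cat{2}$ is $0$, sent to the bottom $0$ of $\sai{Q}$).
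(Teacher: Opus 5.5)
Your argument is correct and complete. Every item of the statement follows from (\textbf{O1})--(\textbf{O3}) exactly as you derive it: $(*)$, $(**)$, $(***)$, $k^{\perp\perp}=k$, $0^{\perp}=e=e^{\perp\perp}$, the least-upper-bound property of $(\bigsqcup S)^{\perp\perp}$, the module axioms, and the casewise $\Cat{2}$-action. For the last of these, associativity and unitality of the $\Cat{2}$-action are trivial, though you only mention the join laws.

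The paper gives no proof of this theorem; it is quoted from \cite[Theorems 4, 5]{BPS2025}. So there is nothing in the paper to compare your Steps 1--3 against.

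Your Step 4, however, follows the pattern the paper itself uses later.
\begin{itemize}
\item Your two key identities are parts 2 and 3 of Lemma~\ref{lem:perp-0-e}. The paper proves them by the same device of testing against projections $t$ via $(*)$.
\item For the product identity, the paper passes between $y$ and $y^{\perp\perp}$ using $y^{\perp}=y^{\perp\perp\perp}$ inside its chain of equivalences. You instead take adjoints and use $y\le k\iff y^{\perp\perp}\le k$.
\item For joins, the paper runs the same chain through $(*)$, where you use the ``least projection above all $x_i$'' formulation.
\item Your reduction of (\textbf{A1})--(\textbf{A4}) to these two identities is exactly how Theorem~\ref{thm:K-module} obtains the module structure on $\widetilde K$ from (\textbf{IDA2})--(\textbf{IDA3}).
\end{itemize}

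What your version adds is self-containment. In the paper, Lemma~\ref{lem:perp-0-e} is proved using the cited theorem. You instead obtain the theorem from the Foulis axioms alone, with those identities appearing as intermediate lemmas.
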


\medskip
\begin{remark}
\smallskip\noindent{\rm (1)} The bracket $\sai{-}$ is an annihilator/pseudocomplement-type operation for a Foulis quantale $\mathcal Q$: for $s \in Q$, we define $[s] := (s^{*})^{\perp}$. In particular (cf. Theorem~\ref{th4th5}), for all $t \in Q$,
\[s^{*} \odot t = 0 \iff t \le s^{\perp}.
\]
Hence, $s^{\perp}$ is the greatest element annihilated by $s^{*}$ with respect to the transitive relation $\le$ (though not necessarily with respect to the order $\sqsubseteq$ on $\mathcal Q$).

\smallskip
\noindent
{\rm (2)} The operation ${}^{\perp}$ is an orthocomplementation on the ``test''  part $\sai{Q}$ but not on $Q$. The operation 
$c := {}^{\perp\perp}$ is used throughout as the
canonical projection/closure onto tests; in particular, $c(x) = x$ for all $x \in \sai{Q}$, and 
$y\leq c(y)$ for all $y\in Q$
and the module action is defined by $u \bullet k = c(u \odot k)$ 
 for all $u\in Q$ and $k\in \sai{Q}$.
\end{remark}

  \medskip

Foulis quantales and orthomodular lattices are deeply interconnected, forming a natural correspondence in the study of quantum logic and fuzzy set theory. A complete orthomodular lattice can be used to construct a Foulis quantale, and conversely, any Foulis quantale yields a complete orthomodular lattice.

\subsection{The category $\mathbb{COL}$ of complete orthomodular lattices}

Both Boolean
algebras and ortholattices have an orthocomplementation. It is distributivity that
characterizes the Boolean algebras among the ortholattices (A Boolean algebra is a distributive
ortholattice). Ortholattices as weaker structure capture mathematical and logical relationships that traditional Boolean algebras cannot express.


\medskip

\begin{definition}
An \textit{ortholattice} is a tuple $\mathcal{M} = \left( {M, \le ,\mathop  {}^ \bot} \right)$ satisfying the following conditions:
\begin{enumerate}
  \item $\left( {M, \le} \right)$ is a bounded lattice with least element $0$ and greatest element $1$;
  \item  The map ${}^ \bot  :M \to M$ satisfies, for all $m,n\in M$,
    \begin{enumerate}
    \renewcommand{\labelenumi}{\Alph{enumi}.}
    
    \item  $m\wedge m^{\perp}=0$ and $m\vee m^{\perp}=1$,
    
    \item  $m \le n \Rightarrow {n^ \bot } \le {m^ \bot }$; 

    \item  $(m^{\perp})^{\perp}=m$,
\end{enumerate}
\end{enumerate}
An ortholattice $\mathcal{M}$ is said to be an \textit{orthomodular lattice} if for all $m,n \in M$ such that $m\le n$, it holds that $ n = m \vee \bigl(m^{\perp}\wedge n\bigr)$.
\end{definition}
\medskip

We now present the definition for the category of orthomodular lattices and the corresponding ortholattice isomorphisms.

\medskip

\begin{definition}
Given two orthomodular lattices ${\mathcal{M}_1} = \left( {{M_1},{ \le _1},{}^{{ \bot _1}} } \right)$ and ${\mathcal{M}_2} = \left( {{M_2},{ \le _2},{}^{{ \bot _2}} } \right)$, an \textit{ortholattice isomorphism} $g:{\mathcal{M}_1} \to {\mathcal{M}_2}$ is a function $g:{M_1} \to {M_2}$ that satisfies the following conditions:
\begin{enumerate}
\item $g$ is a bijection,
\item ${m} \le_1 {n} \Leftrightarrow g\left( {{m}} \right) \le_2 g\left( {{n}} \right)$,
\item $g\left( {{m^{{ \bot _1}}}} \right) = {\left( {g\left( m \right)} \right)^{^{{ \bot _2}}}}$
\end{enumerate}
for all ${m},{n} \in {M_1}$. If ${\mathcal{M}_1} ={\mathcal{M}_2}$, we say that $g$ is an \textit{ortholattice automorphism}.
We denote by $\mathbb{COL}$ the category of complete orthomodular lattices and ortholattice isomorphisms.
\end{definition}

\medskip

\begin{remark}
Let $g:{\mathcal{M}_1} \to {\mathcal{M}_2}$ be an ortholattice isomorphism. Then for all \(m,n\in M_1\), we have $g(m \vee n) = g(m)\vee g(n)$ and $ g(m\wedge n) = g(m) \wedge g(n) $.
\end{remark}
\medskip

Additionally, for an ortholattice $\mathcal{M} = (M, \le, {}^{\perp})$, we define two key operations for each element $m \in M$:
\begin{enumerate}
    \item \textit{Sasaki projection} (onto $m$) : This map, denoted ${\pi _m}\colon M \to M$, takes an element $n$ to $m \land (m^\perp \lor n)$,

    \item \textit{Sasaki hook} (from $m$) : This map, denoted ${\pi^m}\colon M \to M$, takes an element $n$ to $m^\perp \lor (m \land n)$.
\end{enumerate}
\medskip
These two maps are always order-preserving. A significant property is that the ortholattice $\mathcal{M}$ is orthomodular if and only if, for every $m \in M$, the Sasaki projection $\pi _m$ is the left order adjoint of the Sasaki hook $\pi^m$. This means that in an orthomodular lattice, Sasaki projections preserve arbitrary existing joins.
\medskip

\begin{definition}\label{linearmap1}
A function $f: X \to Y$ is defined as a \textit{linear map} (or \textit{adjoint map}) from an orthomodular lattice $\mathcal{X}$ to an orthomodular lattice $\mathcal{Y}$ if there exists a function $g: Y \to X$, denoted  $f^*$ and called the \textit{adjoint} of $f$ (uniqueness follows from \cite[Lemma 1]{BPS2025}), such that for all $x \in X$ and  $y \in Y$, the following equivalence holds:
$$f(x) \perp_Y y \quad \iff \quad x \perp_X g(y),$$
where $a \perp_{\mathcal M} b$ in an orthomodular lattice $\mathcal M=(M,\le_{\mathcal M},{}{}^{\perp})$ for $a,b\in M$ means $a \le_{\mathcal M} b^{\perp}$.

The collection of all linear maps from an orthomodular lattice $\mathcal{X}$ to an orthomodular lattice $\mathcal{Y}$ is denoted by $\textbf{Lin}(\mathcal{X},\mathcal{Y})$. We write $\textbf{Lin}(\mathcal{X})=\textbf{Lin}(\mathcal{X},\mathcal{X})$. Moreover, every ortholattice isomorphism 
$k\colon \mathcal{X}\to \mathcal{Y}$ has an adjoint $k^{-1}$. Note that for any orthomodular lattice $\mathcal X$, $\pi_x \in \textbf{Lin}(\mathcal X)$ such that 
$\pi_x=(\pi_x)^{*}$ for all $x \in X$, and 
every ortholattice automorphism is in $\textbf{Lin}(\mathcal{X})$.

Furthermore, if both $\mathcal{X}$ and $\mathcal{Y}$ are complete orthomodular lattices, then $\textbf{Lin}(\mathcal{X},\mathcal{Y})$ forms a complete lattice and $\textbf{Lin}(\mathcal{X})$ is a Foulis quantale \cite[Proposition 1]{BPS2025}. 
\end{definition}

\medskip

This definition is central in the study of orthomodular lattices from a categorical perspective, where such maps serve as morphisms. The existence of the adjoint function $g$ captures a specific type of structural preservation.

\medskip

The following theorem characterizes Sasaki projections. 
\medskip
\begin{theorem}[Characterization of Sasaki projections]\label{charsas}{\rm \cite{Kalmbach83}}
Let $\mathcal M$ be an orthomodular lattice. A map $\phi: M \to M$ is a Sasaki projection $\pi_m$ for some element $m \in M$ if and only if it satisfies the following properties:
\begin{enumerate}
    \item \textbf{Idempotence:} $\phi^2 = \phi$, meaning $\phi(\phi(x)) = \phi(x)$ for all $x \in M$.
    \item \textbf{Image:} $\text{im } \phi = \downarrow m$, where $\downarrow m = \{x \in M \mid x \leq m\}$ and $\phi(1)=m$.
    \item \textbf{Self-adjointness:} $\phi$ is a self-adjoint linear map.
\end{enumerate}
\end{theorem}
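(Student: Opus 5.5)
We prove the two implications separately. Throughout we use only the orthomodular identities, the fact that $\pi_m$ is linear and self-adjoint (Definition~\ref{linearmap1}), and the observation that a linear map $f$ preserves all existing joins and in particular $f(0)=0$. The last point holds because $f(x)\perp y$ iff $x\perp f^{*}(y)$: hence $f$ is left adjoint to $y\mapsto f^{*}(y^{\perp})^{\perp}$, and left adjoints preserve joins.

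\emph{($\Rightarrow$).} Let $\phi=\pi_m$.
\begin{enumerate}
\item \textbf{Image.} $\pi_m(1)=m\wedge(m^{\perp}\vee 1)=m$, and $\pi_m(x)\le m$ for every $x$. For $x\le m$, orthomodularity gives $m\wedge(m^{\perp}\vee x)=x$; this is the standard dual form of the orthomodular law, applied to $x\le m$. Hence $\operatorname{im}\pi_m={\downarrow}m$ and $\pi_m$ fixes ${\downarrow}m$ pointwise.
\item \textbf{Idempotence.} Since $\operatorname{im}\pi_m\subseteq{\downarrow}m$ and $\pi_m$ fixes ${\downarrow}m$, we get $\pi_m^2=\pi_m$.
\item \textbf{Self-adjointness.} This was recorded in Definition~\ref{linearmap1}. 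It amounts to checking that $\pi_m(x)\perp y$ iff $x\perp\pi_m(y)$, which is the Sasaki adjunction $\pi_m\dashv\pi^m$ rewritten with orthocomplements, since $\pi^m(y^{\perp})=\pi_m(y)^{\perp}$.
\end{enumerate}

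\emph{($\Leftarrow$).} Let $\phi$ satisfy (1)--(3) and put $m=\phi(1)$. The goal is $\phi(x)=m\wedge(m^{\perp}\vee x)$ for all $x$.
\begin{enumerate}
\item \textbf{$\phi$ fixes ${\downarrow}m$.} By (2) every $y\le m$ has the form $\phi(z)$, so $\phi(y)=\phi^2(z)=\phi(z)=y$ by (1).
\item \textbf{$\phi(m^{\perp})=0$.} By self-adjointness, $\phi(m^{\perp})\perp y$ iff $m^{\perp}\perp\phi(y)$, i.e.\ iff $\phi(y)\le m$, which always holds. Taking $y=1$ gives $\phi(m^{\perp})\perp 1$, so $\phi(m^{\perp})=0$.
\item \textbf{Kernel-type description.} For any $x$ and $y$, self-adjointness gives $\phi(x)\le y^{\perp}$ iff $\phi(y)\le x^{\perp}$. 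We aim to show, for $y\le m$,
\[
\phi(x)\perp y \iff x\perp y .
\]
Indeed, for $y\le m$ we have $\phi(y)=y$ by step 1, and then $\phi(x)\perp y$ iff $x\perp\phi(y)$ iff $x\perp y$.
\item \textbf{Identify the orthocomplement of $\phi(x)$ inside ${\downarrow}m$.} Since $\phi(x)\le m$, the element $\phi(x)$ is determined by the set of $y\le m$ orthogonal to it. The greatest such $y$ is $m\wedge\phi(x)^{\perp}$, and by orthomodularity $\phi(x)=m\wedge(m\wedge\phi(x)^{\perp})^{\perp}$. By step 3, the set of $y\le m$ with $y\perp\phi(x)$ equals the set of $y\le m$ with $y\perp x$. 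The greatest element of the latter set is $m\wedge x^{\perp}$. Hence $m\wedge\phi(x)^{\perp}=m\wedge x^{\perp}$.
\item \textbf{Conclude.} Applying orthomodularity once more,
\[
\phi(x)=m\wedge(m\wedge x^{\perp})^{\perp}=m\wedge(m^{\perp}\vee x)=\pi_m(x).
\]
\end{enumerate}

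The main obstacle is step 4. It requires the relative orthocomplement identity in ${\downarrow}m$, namely that $a\le m$ implies $a=m\wedge(m\wedge a^{\perp})^{\perp}$, which is exactly the orthomodular law. The argument also has to work with orthogonality rather than order, because $\phi$ is not known a priori to be monotone. Everything else, including step 2, reduces directly to self-adjointness and the image condition.
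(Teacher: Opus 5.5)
Your proof is correct. The paper gives no proof of this statement: it quotes it from \cite{Kalmbach83} and only uses it later, in Lemma~\ref{lem:perp-0-e} and Remark~\ref{rem:sasaki-proj-terminology}. So there is no in-paper argument to compare with, and your proposal serves as a self-contained verification.

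Your argument is direct and lattice-theoretic. In the forward direction, you reduce self-adjointness to the order adjunction $\pi_m\dashv\pi^m$ via $\pi^m(y^{\perp})=\pi_m(y)^{\perp}$. The converse rests on one observation: for $a\le m$ we have $a=m\wedge(m\wedge a^{\perp})^{\perp}$, so an element of ${\downarrow}m$ is determined by which elements of ${\downarrow}m$ are orthogonal to it. Self-adjointness, restricted to $y\le m$ where $\phi(y)=y$, then shows that $\phi(x)$ and $x$ are orthogonal to the same elements of ${\downarrow}m$, which forces $\phi(x)=\pi_m(x)$. This avoids the Baer $*$-semigroup setting in which Foulis-type results of this kind are classically developed. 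It also needs neither completeness of $\mathcal M$ nor monotonicity of $\phi$.

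It is worth making explicit that Step~1 is exactly where the image hypothesis enters, and that this hypothesis cannot be dropped. Consider the map with $\phi(0)=0$ and $\phi(x)=1$ for $x\neq 0$. We have $\phi(x)\perp y$ iff $x=0$ or $y=0$, so $\phi$ is a self-adjoint idempotent linear map with $\phi(1)=1$. Yet it equals $\pi_1=\mathrm{id}_M$ only when $M=\{0,1\}$.

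Two cosmetic points, neither affecting correctness:
\begin{itemize}
\item Your Step~2, showing $\phi(m^{\perp})=0$, is never used.
\item Your closing remark that $\phi$ is ``not known a priori to be monotone'' conflicts with your own opening observation that linear maps are residuated and hence monotone.
\end{itemize}
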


\medskip
In the context of a Hilbert space $\mathcal{H}$, these mappings correspond uniquely to the action of self-adjoint linear idempotent operators (orthogonal projections) on the lattice of closed subspaces.
\medskip

\medskip\noindent

\begin{remark}\label{rem:two-adjoints-sasaki}
There are \emph{two different notions of adjoint} in use, and they should not be conflated.

\smallskip
\noindent
\textbf{(1) Order adjoint (Galois adjunction):}
For monotone maps $f,g\colon M\to M$, we write $f\dashv g$ if
\[
f(x)\le y \quad\Longleftrightarrow\quad x\le g(y)
\qquad(x,y\in M).
\]
In an orthomodular lattice $\mathcal M$ and $m\in M$, the Sasaki projection
\[
\pi_m(x)=m\wedge(m^\perp\vee x)
\]
is a left order adjoint to the Sasaki hook
\[
(m\Rightarrow -)\colon M\to M,\qquad
m\Rightarrow y:=m^\perp\vee(m\wedge y),
\]
that is,
\[
\pi_m(x)\le y \quad\Longleftrightarrow\quad x\le (m\Rightarrow y)
\qquad(x,y\in M).
\]
(This is noted in the paragraph preceding Definition~\ref{linearmap1}.)

\smallskip
\noindent
\textbf{(2) Orthogonality adjoint (the involution in $\mathbf{Lin}(\mathcal M)$):}
In the Foulis quantale $\mathbf{Lin}(\mathcal M)$, the involution ${}^{*}$ is \emph{not} the order adjoint.
Rather, for a join-preserving map $f\colon M\to M$ the map $f^{*}\colon M\to M$ is characterized by
\[
f(x)\perp y \quad\Longleftrightarrow\quad x\perp f^{*}(y),
\qquad(x,y\in M),
\]
where $u\perp v$ means $u\le v^\perp$.
Equivalently, if $f_\dashv$ denotes the (right) order adjoint of $f$ in the sense of (1), then
\[
f^{*}(y)=\bigl(f_\dashv(y^\perp)\bigr)^\perp.
\]

\smallskip
\noindent
In particular, for the Sasaki projection $\pi_m$ we have $\pi_m\dashv(m\Rightarrow -)$, hence
\[
(\pi_m)^{*}(y)=\bigl((m\Rightarrow y^\perp)\bigr)^\perp.
\]
A direct computation shows that $(\pi_m)^{*}=\pi_m$:
\[
\begin{aligned}
(\pi_m)^{*}(y)
&=\bigl(m^\perp\vee(m\wedge y^\perp)\bigr)^\perp %
=(m^\perp)^\perp\ \wedge\ (m\wedge y^\perp)^\perp \\
&=m\ \wedge\ (m^\perp\vee y) %
=\pi_m(y).
\end{aligned}
\]
Thus the statement ``$\pi_x=(\pi_x)^{*}$'' (after Definition~\ref{linearmap1}) means that Sasaki projections are
\emph{self-adjoint with respect to the orthogonality adjoint ${}^{*}$ in $\mathbf{Lin}(\mathcal M)$},
and it does \emph{not} contradict the order-adjunction $\pi_m\dashv(m\Rightarrow -)$ from (1).
\end{remark}

\medskip\noindent
Now assume that $\mathcal X$ and $\mathcal Y$ are \emph{complete} orthomodular lattices.
We equip $\mathbf{Lin}(\mathcal X,\mathcal Y)$ with the \emph{pointwise order}
\[
f \sqsubseteq g
\quad\Longleftrightarrow\quad
(\forall x\in X)\; f(x)\le_{\mathcal Y} g(x).
\]
For any family $\{f_i\}_{i\in I}\subseteq \operatorname{Lin}(\mathcal X,\mathcal Y)$ we define
joins pointwise by
\[
\Big(\bigsqcup_{i\in I} f_i\Big)(x)\;:=\;\bigvee_{i\in I} f_i(x),
\qquad (x\in X).
\]
The bottom element is the zero map $0_{\mathcal X,\mathcal Y}:X\to Y$ defined by $0_{\mathcal X,\mathcal Y}(x)=0_{\mathcal Y}$.

\medskip\noindent
When $\mathcal X=\mathcal Y$, we write $\operatorname{Lin}(\mathcal X):=\operatorname{Lin}(\mathcal X,\mathcal X)$
and define the quantale multiplication and unit by
\[
f\odot g \;:=\; f\circ g,
\qquad
e \;:=\; \operatorname{id}_X,
\]
and the involution by the adjoint $f^{*}$.
Moreover, writing $\pi_a(y):=a\wedge(a^{\perp}\vee y)$ for the Sasaki projection in $\mathcal X$,
the Foulis bracket is given by
\[
[f]\;:=\;\pi_{\,f^{*}(1)^{\perp}}
\qquad(f\in \operatorname{Lin}(\mathcal X)),
\]
so that $\operatorname{Lin}(\mathcal X)$ becomes a Foulis quantale \cite[Proposition~1]{BPS2025}.
(Equivalently, one may set $f^{\perp}:=[f^{*}]=\pi_{\,f(1)^{\perp}}$).
For $x\in X$ we have 
\[
[\pi_{x}]\;=\;\pi_{\,\pi_{x}^{*}(1)^{\perp}}%
\;=\;\;\pi_{\,\pi_{x}(1)^{\perp}}\;=\;\;\pi_{x^{\perp}}.
\]

\medskip

\begin{example}[Hilbert lattice witness for non-monotonicity of Sasaki projections]
\label{ex:hilbert-sasaki-nonmonotone}
{\rm Let $H=\mathbb{R}^3$ with the standard inner product, and let $\mathcal X$ be the lattice of
all linear subspaces of $H$, ordered by inclusion. Then $\mathcal X$ is an orthomodular
lattice with
\[
A\wedge B = A\cap B,\qquad
A\vee B = \operatorname{span}(A\cup B),\qquad
A^\perp=\{h\in H:\langle h,a\rangle=0\ \forall a\in A\}.
\]
For $u\in X$, define the Sasaki projection
\[
\pi_u:X\to X,\qquad
\pi_u(x):=u\wedge (x\vee u^\perp)=u\cap \operatorname{span}(x\cup u^\perp).
\]

Put
\[
u:=\operatorname{span}(e_1),\qquad
v:=\operatorname{span}(e_1,e_2),
\]
so $u\le v$. Let
\[
x:=\operatorname{span}(e_1+e_2+e_3).
\]
We compute $\pi_u(x)$ and $\pi_v(x)$.

First, $u^\perp=\operatorname{span}(e_2,e_3)$. We claim that
\[
x\vee u^\perp=\operatorname{span}(x\cup u^\perp)=\mathbb{R}^3.
\]
Indeed, $e_2,e_3\in u^\perp\subseteq \operatorname{span}(x\cup u^\perp)$ and
$e_1=(e_1+e_2+e_3)-e_2-e_3\in \operatorname{span}(x\cup u^\perp)$, hence
$e_1,e_2,e_3\in \operatorname{span}(x\cup u^\perp)$, so
$\operatorname{span}(x\cup u^\perp)=\mathbb{R}^3$. Therefore
\[
\pi_u(x)=u \wedge \operatorname{span}(x\cup u^\perp)=u\wedge \mathbb{R}^3=u=\operatorname{span}(e_1).
\]

Next, $v^\perp=\operatorname{span}(e_3)$. We claim that
\[
x\vee v^\perp=\operatorname{span}(x\cup v^\perp)=\operatorname{span}(e_1+e_2,e_3).
\]
Indeed, $e_3\in v^\perp\subseteq \operatorname{span}(x\cup v^\perp)$ and
$e_1+e_2=(e_1+e_2+e_3)-e_3\in \operatorname{span}(x\cup v^\perp)$, so
$\operatorname{span}(e_1+e_2,e_3)\subseteq \operatorname{span}(x\cup v^\perp)$.
Conversely, $e_1+e_2+e_3=(e_1+e_2)+e_3\in \operatorname{span}(e_1+e_2,e_3)$, hence
$x\subseteq \operatorname{span}(e_1+e_2,e_3)$ and also
$v^\perp\subseteq \operatorname{span}(e_1+e_2,e_3)$, yielding
$\operatorname{span}(x\cup v^\perp)\subseteq \operatorname{span}(e_1+e_2,e_3)$.
Thus equality holds. Therefore
\[
\pi_v(x)
=
v\wedge \operatorname{span}(e_1+e_2,e_3)
=
\operatorname{span}(e_1,e_2)\cap \operatorname{span}(e_1+e_2,e_3)
=
\operatorname{span}(e_1+e_2).
\]

Now $\operatorname{span}(e_1)\not\subseteq \operatorname{span}(e_1+e_2)$, since
$e_1=\lambda(e_1+e_2)$ would force $0=\lambda$ from the $e_2$-coordinate, hence
$e_1=0$, a contradiction. Therefore $\pi_u(x)\nleq \pi_v(x)$.
Consequently, in the pointwise order on maps $X\to X$, we have 
\[
u\le v
\qquad\text{but}\qquad
\pi_u \nleq \pi_v,
\]
as witnessed by the element $x\in X$.}
\end{example}

\medskip

\begin{remark}\label{rem:sasaki-proj-terminology}
The term \emph{Sasaki projection} is used in two closely related senses in this paper.

\smallskip
(1) If $\mathcal Q$ is an abstract Foulis quantale, then by definition we call the elements of
\(
[Q]:=\{\,t^{\perp}\mid t\in Q\,\}
\)
\emph{Sasaki projections}.

\smallskip
(2) If $\mathcal M$ is a complete orthomodular lattice, then we also consider the standard \emph{Sasaki projection maps}
\(
\pi_m\colon M\to M,\quad
\pi_m(b):=m\wedge(m^{\perp}\vee b)
\)
(and the associated Sasaki hooks).

\smallskip
These notions agree in the basic example 
$\mathcal Q=\mathbf{Lin}(\mathcal M)$. 
Namely, by Theorem \ref{charsas} and {\rm \cite[Proposition 26]{botur2025foulis}} we have that 
\[
[\,\mathbf{Lin}(\mathcal M)\,]=\{\,\pi_m \mid m\in M\,\}.
\]
\end{remark}

\medskip

\begin{example}\label{Sasaki is Involutive}{\rm 
 Let $\mathcal{M} = \left( {M, \le ,{}^ \bot} \right)$ be a complete orthomodular lattice. Define the set $\textbf{Si}({M})$ as the set of all finite compositions of Sasaki projections on $\mathcal{M}$:
$$\textbf{Si}({M}) = \left\{ {{\pi _{{m_1}}} \circ \cdots \circ {\pi _{{m_j}}} \mid {m_1}, \ldots ,{m_j} \in M, j \in {\mathbb{N}^+}} \right\}$$
This structure $\textbf{Si}(\mathcal{M})=\left(\textbf{Si}(M),\circ, \operatorname{id}_M,{{}^ *}\right)$ is the smallest involutive submonoid 
of $\textbf{Lin}(\mathcal{M})$ containing all Sasaki projections $\{\pi_m : M\to M \mid m\in M\}$.}
\end{example}

\medskip

The following results establish the correspondence between Foulis quantales and complete orthomodular lattices.

\medskip

\begin{theorem}
\label{FoulisOMKerLem} {\rm\cite[Theorem 4]{BPS2025}}
Let $\mathcal Q$ be a Foulis quantale. Then the structure $\sai{\mathcal Q}=(\sai{Q}, \leq, {}^{\perp})$ 
  is a complete orthomodular lattice where 

$$\begin{array}{lrcl}
&\sai{Q}& = &
\set{\sai{t}}{t\in Q}  \subseteq Q\\
\mbox{Order} & k_{1}\leq k_{2} & \Leftrightarrow & k_{1} = k_{2}\odot k_{1} \\
\mbox{Top} & e & = &  
   \sai{0} \\
\mbox{Orthocomplement\quad\quad} & k^{\perp} & = & \sai{k} \\
\mbox{Meet} & k_{1} \conjun k_{2} & = &  
   \big(k_{1} \odot \sai{\sai{k_{2}}\odot k_{1}}\big)^{\perp\perp}\\
\mbox{Join} & \bigvee S & = &  \sai{\sai{\bigsqcup S}}.
\end{array}$$
\end{theorem}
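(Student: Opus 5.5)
The plan is to derive every clause of the statement from the three properties of the relation $\leq$ recorded in Theorem~\ref{th4th5}: $(*)$, $(**)$ and $(***)$. I will also use the Foulis axioms (\textbf{O1})--(\textbf{O3}) of Proposition~\ref{remFouldef}.

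\textbf{Step 1: partial order.} First I would check that $\leq$ restricted to $\sai{Q}$ is a partial order.
\begin{itemize}
\item Reflexivity is (\textbf{O1}), since $k\odot k=k$.
\item Transitivity: from $a=b\odot a$ and $b=c\odot b$ we get $c\odot a=c\odot b\odot a=b\odot a=a$.
\item Antisymmetry for self-adjoint idempotents: $a=b\odot a$ and $b=a\odot b$ give, on taking involutions, $a=a^{*}=a\odot b$. Hence $a=a\odot b=b$.
\end{itemize}

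\textbf{Step 2: orthocomplementation.} Next, $k\mapsto k^{\perp}$ is antitone and involutive on $\sai{Q}$ by $(**)$, and $k^{\perp}\in\sai{Q}$ because $k^{\perp}=\sai{k^{*}}$. The top element is $e=0^{\perp}$, since $t\le e$ always holds. The bottom element is $0=e^{\perp}$ by (\textbf{O2}).

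\textbf{Step 3: completeness.} I would show that $\big(\bigsqcup S\big)^{\perp\perp}$ is the least upper bound in $\sai{Q}$ of a family $S\subseteq\sai{Q}$. By $(*)$ and $(***)$, for $k\in\sai{Q}$ the condition $s\le k=k^{\perp\perp}$ is equivalent to $k^{\perp *}\odot s=0$, that is, $k^{\perp}\odot s=0$. Since $\odot$ distributes over joins, this holds for all $s\in S$ exactly when $k^{\perp}\odot\bigsqcup S=0$, i.e.\ when $\bigsqcup S\le k^{\perp\perp}=k$. Applying $(**)$ twice turns this into $(\bigsqcup S)^{\perp\perp}\le k$. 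Taking $k=(\bigsqcup S)^{\perp\perp}$ and using $y\le y^{\perp\perp}$ shows it is an upper bound, so the join formula is established. Meets then follow by De Morgan through ${}^{\perp}$. Once orthomodularity is known, the stated meet formula can be verified via the Sasaki projection.

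\textbf{Step 4: complement laws.} Take $k\wedge k^{\perp}=0$. If $a\le k$ and $a\le k^{\perp}$, then by $(*)$ we have $k^{*}\odot a=0$. But $k\odot a=a$, so $a=0$. Applying ${}^{\perp}$ then gives $k\vee k^{\perp}=e$.

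\textbf{Step 5: orthomodularity (main obstacle).} Let $a\le b$ in $\sai{Q}$, and put $c=a^{\perp}\wedge b$. I want $b=a\vee c$. The approach is to use the orthomodularity criterion: it suffices to show that $d:=b\wedge(a\vee c)^{\perp}=0$.
\begin{itemize}
\item I would look at the element $b\odot a^{\perp}$, or more concretely the Sasaki-type element $a^{\perp}\odot b$.
\item Using (\textbf{O3}), I would show that $c$ realises the largest part of $b$ orthogonal to $a$: anything annihilated by both $a$ and $b^{\perp}$ factors through $c$.
\item Then $d\le b$, $d\perp a$ and $d\perp c$ force $d\le c$ and $d\le c^{\perp}$, hence $d=0$ by Step 4.
\end{itemize}

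The hard part is showing that $d\perp a$ and $d\le b$ imply $d\le c$, since this requires the meet to behave correctly with the multiplicative structure. I would first try to establish the explicit meet formula $(k_1\odot\sai{\sai{k_2}\odot k_1})^{\perp\perp}$, using (\textbf{O3}) to identify the element $\sai{\sai{k_2}\odot k_1}$. Failing that, I would fall back on citing \cite[Theorem 4]{BPS2025} for this step.
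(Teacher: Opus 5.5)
Your Steps 1--4 are sound. In particular, the Step 3 characterization ($t\le k\iff t^{\perp\perp}\le k$ for $k\in\sai{Q}$) is the right tool. One small omission: you should justify $e\in\sai{Q}$, e.g.\ $(*)$ gives $e\le 0^{\perp}$, hence $e=0^{\perp}$. (The paper itself gives no proof here; it imports the result from \cite[Theorem 4]{BPS2025}.)

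The genuine gap is Step 5, which does not prove orthomodularity. Your element satisfies $d=b\wedge(a\vee c)^{\perp}=b\wedge a^{\perp}\wedge c^{\perp}=c\wedge c^{\perp}=0$ in \emph{every} ortholattice. This includes the non-orthomodular hexagon, where $a<b$, $c=0$ and $a\vee c=a\neq b$. So establishing $d=0$ carries no information. The step you flag as hard ($d\le a^{\perp}$ and $d\le b$ imply $d\le c$) is just the definition of the meet $c$. All the content is hidden in your phrase ``it suffices to show $d=0$''. Passing from $a\vee c\le b$ and $b\wedge(a\vee c)^{\perp}=0$ to $a\vee c=b$ is the implication [$x\le y$ and $y\wedge x^{\perp}=0\Rightarrow x=y$], which is equivalent to orthomodularity, so the argument is circular. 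Your fallback of citing \cite[Theorem 4]{BPS2025} cites the statement being proved. The stated meet formula is likewise only gestured at.

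The missing idea is to prove that implication with the multiplication. Let $x\le y$ in $\sai{Q}$. Applying ${}^{*}$ to $x=y\odot x$ gives $x=x\odot y$. Hence $x^{*}\odot(y\odot x^{\perp})=x\odot x^{\perp}=0$, and $(*)$ gives $y\odot x^{\perp}\le x^{\perp}$. Trivially $y\odot x^{\perp}\le y$. Your Step 3 characterization then yields $(y\odot x^{\perp})^{\perp\perp}\le y\wedge x^{\perp}$. So if $y\wedge x^{\perp}=0$, then $y\odot x^{\perp}\le(y\odot x^{\perp})^{\perp\perp}=0$, i.e.\ $y\odot x^{\perp}=0$. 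Taking ${}^{*}$ gives $x^{\perp}\odot y=0$, and $(*)$ gives $y\le x^{\perp\perp}=x$. Apply this with $x=a\vee(a^{\perp}\wedge b)$ and $y=b$.

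The meet formula needs no orthomodularity either. Put $p=\sai{\sai{k_2}\odot k_1}=(k_1\odot k_2^{\perp})^{\perp}$. By $(*)$, $t\le p\iff k_2^{\perp}\odot k_1\odot t=0$. Taking $t=p$ shows $k_1\odot p\le k_2$; also $k_1\odot p\le k_1$, so $(k_1\odot p)^{\perp\perp}$ is a common lower bound. Conversely, any common lower bound $d$ satisfies $k_2^{\perp}\odot k_1\odot d=k_2^{\perp}\odot k_2\odot d=0$, so $d\le p$. Hence $d=(k_1\odot p)\odot d$, giving $d\le k_1\odot p\le(k_1\odot p)^{\perp\perp}$.
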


\medskip

\begin{lemma}\label{lem:perp-0-e}
In every Foulis quantale 
$\mathcal{Q}=(Q, \bigsqcup,\odot, {}^{*}, {}^{\perp}, e)$, the following holds:

 \medskip

\begin{enumerate}
    \item $0^{\perp}=e=e^{\perp\perp}$, $e^{\perp}=0=0^{\perp\perp}$,
    \item $(x\odot y^{\perp\perp})^{\perp}=(x\odot y)^{\perp}$ for all $x, y\in Q$,
    \item $\Big(\bigsqcup_{i \in I}  x_{i}^{\perp\perp}\Big)^{\perp}
\;=\;
\Big(\bigsqcup_{i \in I}  x_{i}\Big)^{\perp}$ 
for every  indexed family $\{x_i\}_{i\in I}\subseteq Q$, 
\item $(x^{\perp\perp}\odot y)^{\perp\perp}=\left(x^{\perp}\sqcup (x^{\perp}\sqcup y)^{\perp}\right)^{\perp}$ for all $x, y\in Q$.
\end{enumerate}
\end{lemma}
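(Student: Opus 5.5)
\noindent The common tool for all four items is the characterization $(*)$ of Theorem~\ref{th4th5}: $t\le r^{\perp}\iff r^{*}\odot t=0$. Two elements of $\sai{Q}$ will be shown equal by proving that they have the same $\le$-down-sets, tested on elements $t\in\sai{Q}$. This suffices because $\le$ is a partial order on $\sai{Q}$ (Theorem~\ref{FoulisOMKerLem}) and is reflexive there ($k=k\odot k$ by (\textbf{O1})). Throughout I will also use $k^{\perp\perp}=k$ for $k\in\sai{Q}$ from $(**)$. In particular $y^{\perp\perp\perp}=y^{\perp}$ for every $y\in Q$.

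\smallskip\noindent\emph{Item (1).} By (\textbf{O2}), $e^{\perp}=0$. Since $0^{*}\odot e=0\odot e=0$, $(*)$ gives $e\le 0^{\perp}$, i.e. $e=0^{\perp}\odot e=0^{\perp}$. Then $e^{\perp\perp}=0^{\perp}=e$ and $0^{\perp\perp}=e^{\perp}=0$.

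\smallskip\noindent\emph{Item (2).} For $t\in\sai{Q}$, $(*)$ gives
\[
t\le (x\odot y)^{\perp}\iff y^{*}\odot(x^{*}\odot t)=0\iff x^{*}\odot t\le y^{\perp}.
\]
In the same way, using that $y^{\perp\perp}$ is self-adjoint,
\[
t\le (x\odot y^{\perp\perp})^{\perp}\iff y^{\perp\perp}\odot x^{*}\odot t=0\iff x^{*}\odot t\le y^{\perp\perp\perp}=y^{\perp}.
\]
So both sides have the same down-sets in $\sai{Q}$ and are equal.

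\smallskip\noindent\emph{Item (3).} Because ${}^{*}$ preserves joins and $\odot$ distributes over joins, $(\bigsqcup x_i)^{*}\odot t=0$ iff $x_i^{*}\odot t=0$ for all $i$, i.e. iff $t\le x_i^{\perp}$ for all $i$. Replacing $x_i$ by $x_i^{\perp\perp}$ gives the condition $t\le x_i^{\perp\perp\perp}=x_i^{\perp}$ for all $i$. Again the down-sets agree.

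\smallskip\noindent\emph{Item (4).} This is the main work; it amounts to identifying the action $\bullet$ restricted to tests with Sasaki projection. Put $k=x^{\perp\perp}\in\sai{Q}$, so $k^{\perp}=x^{\perp}$. By Theorem~\ref{FoulisOMKerLem}, $\sai{Q}$ is a complete orthomodular lattice with joins $\bigvee S=(\bigsqcup S)^{\perp\perp}$.
\begin{itemize}
\item By item (3), $k^{\perp}\vee y^{\perp\perp}=(x^{\perp}\sqcup y)^{\perp\perp}$.
\item By De~Morgan in $\sai{Q}$ together with item (3) again, the right-hand side of (4) is
\[
(x^{\perp}\sqcup(x^{\perp}\sqcup y)^{\perp})^{\perp}
= k\wedge(k^{\perp}\vee y^{\perp\perp})
=\pi_k(y^{\perp\perp}),
\]
the lattice Sasaki projection computed inside $\sai{Q}$.
\item By item (2), $(k\odot y)^{\perp\perp}=(k\odot y^{\perp\perp})^{\perp\perp}$.
\end{itemize}
It therefore suffices to prove the key identity
\[
(k\odot s)^{\perp\perp}=\pi_k(s)\qquad\text{for all } k,s\in\sai{Q}.
\]

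I expect this identity to be the main obstacle. I would prove it by passing to orthocomplements and comparing down-sets. For $t\in\sai{Q}$,
\[
t\le (k\odot s)^{\perp}\iff s\odot k\odot t=0\iff s\odot (k\odot t)^{\perp\perp}=0,
\]
where the last step is the $\le$-version of item (2) applied to $s^{*}=s$. This equals $\pi_k(t)\perp s$, once one knows the identity for the pair $(k,t)$.

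To avoid circularity, I would first try to establish $(k\odot t)^{\perp\perp}=\pi_k(t)$ directly from the meet formula of Theorem~\ref{FoulisOMKerLem}. The alternative is to verify that $(k\odot t)^{\perp\perp}\le k$, that it is $\ge k\wedge t$, and that orthomodularity forces equality with $k\wedge(k^{\perp}\vee t)$.

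Having the identity, the Sasaki adjunction $\pi_k(t)\le s^{\perp}\iff t\le k^{\perp}\vee(k\wedge s^{\perp})$ identifies $(k\odot s)^{\perp}$ with the Sasaki hook. Taking ${}^{\perp}$ then yields $\pi_k(s)$, which completes item (4).
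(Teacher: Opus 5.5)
Your items (1)--(3) are correct and follow the paper's down-set argument via $(*)$. Your reduction of item (4) to the identity $(k\odot s)^{\perp\perp}=\pi_k(s)$ for $k,s\in[Q]$ is also sound; it uses items (2), (3) and De~Morgan in $[Q]$.

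The gap is that this identity, which is the whole content of item (4), is never established. Three things go wrong:
\begin{itemize}
\item The down-set computation is, as you note, circular. It needs the identity for $(k,t)$ to get it for $(k,s)$. Once the identity holds for all pairs, your closing step through the Sasaki hook adds nothing.
\item The meet-formula route is only announced, not carried out.
\item The alternative cannot work as stated. For $a=(k\odot t)^{\perp\perp}$, the bounds $k\wedge t\le a\le k$ do not determine $a$, and orthomodularity gives no equality from them. Such an argument would need $a\le k^{\perp}\vee t$ in addition to $a\le k$; equivalently, $t\odot k\odot(k\wedge t^{\perp})=0$. That gives $a\le\pi_k(t)$. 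You would also need $a^{\perp}\wedge\pi_k(t)=0$. Only then does orthomodularity give $\pi_k(t)=a\vee(a^{\perp}\wedge\pi_k(t))=a$.
\end{itemize}
The paper closes this step by quoting \cite[Proposition 26]{botur2025foulis}: $k\bullet(-)$ is a self-adjoint linear idempotent on $[Q]$ with image ${\downarrow}k$. It then applies Theorem~\ref{charsas}.

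Your first idea does work and would give a self-contained proof. Put $b:=k^{\perp}\vee t$. By De~Morgan, $b^{\perp}=k\wedge t^{\perp}\le k$. Hence $k\odot b^{\perp}=b^{\perp}$, and so $[\,[b]\odot k\,]=(k\odot b^{\perp})^{\perp}=b$. The meet formula of Theorem~\ref{FoulisOMKerLem} then yields $\pi_k(t)=k\wedge b=(k\odot b)^{\perp\perp}$. Since $b=(k^{\perp}\sqcup t)^{\perp\perp}$, item (2) and distributivity give
\[
(k\odot b)^{\perp\perp}=\bigl((k\odot k^{\perp})\sqcup(k\odot t)\bigr)^{\perp\perp}=(k\odot t)^{\perp\perp},
\]
because $k\odot k^{\perp}=k^{*}\odot k^{\perp}=0$ by $(*)$ and (\textbf{O1}). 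Written out, this proves item (4) using only Theorem~\ref{FoulisOMKerLem}, and it avoids the paper's appeal to the external characterization. As submitted, however, item (4) remains unproved.
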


\begin{proof}1. Since $0, e\in \sai{Q}$ we conclude 
from (**) in Theorem~\ref{th4th5} that $0=0^{\perp\perp}$ 
and $e=e^{\perp\perp}$. The remaining part is evident.

\medskip
\noindent{}2. Since $\leq$ is an order on $\sai{Q}$ we have by repeated application of (*) in Theorem~\ref{th4th5} that, 
for all $x, y\in Q$ and all $t\in \sai{Q}$,
\begin{align*}
    t\leq (x\odot y)^{\perp} &\Longleftrightarrow %
    (x\odot y)^{*}\odot t=0 \Longleftrightarrow %
    y^{*}\odot x^{*}\odot t=0\Longleftrightarrow %
    x^{*}\odot t\leq y^{\perp}=y^{\perp\perp\perp}\\
    &\Longleftrightarrow %
\left(y^{\perp\perp}\right)^{*}\odot x^{*}\odot t=0
\Longleftrightarrow %
(x\odot y^{\perp\perp})^{*}\odot t=0
\Longleftrightarrow %
t\leq (x\odot y^{\perp\perp})^{\perp}.
\end{align*}

\noindent{}3. Again, by repeated application of (*) in Theorem~\ref{th4th5} we conclude that, 
for all  indexed families $\{x_i\}_{i\in I}\subseteq Q$ and all $t\in \sai{Q}$,
\begin{align*}
t\leq \Big(\bigsqcup_{i \in I}  x_{i}^{\perp\perp}\Big)^{\perp}&\Longleftrightarrow %
\Big(\bigsqcup_{i \in I}  x_{i}^{\perp\perp}\Big)^{*}\odot t=0
\Longleftrightarrow %
\left(x_{i}^{\perp\perp}\right)^{*}\odot t=0\text{ for all }i\in I\\
&\Longleftrightarrow %
t\leq x_{i}^{\perp\perp\perp}=%
x_{i}^{\perp}\text{ for all }i\in I%
\Longleftrightarrow %
x_{i}^{*}\odot t=0\text{ for all }i\in I\\
&\Longleftrightarrow %
\Big(\bigsqcup_{i \in I}  x_{i}\Big)^{*}\odot t=0
\Longleftrightarrow %
t\leq \Big(\bigsqcup_{i \in I}  x_{i}\Big)^{\perp}.
\end{align*}

\medskip
\noindent{}4. Let $x\in Q$. Then by part 2, 
$(x^{\perp\perp}\odot y)^{\perp\perp}=%
(x^{\perp\perp}\odot y^{\perp\perp})^{\perp\perp}=%
x^{\perp\perp}\bullet y^{\perp\perp}$ 
for all $y\in Q$. The map 
$x^{\perp\perp}\bullet (-)\colon \sai{Q}\to \sai{Q}$ is 
by \cite[Proposition 26]{botur2025foulis}  self-adjoint linear, idempotent
and $\text{im } x^{\perp\perp}\bullet (-) = \downarrow x^{\perp\perp}=\downarrow x^{\perp\perp}\bullet e$. 
From Theorem \ref{charsas} we conclude  $x^{\perp\perp}\bullet (-)=\pi_{x^{\perp\perp}}$.
We compute:  
\begin{align*}
    x^{\perp\perp}\bullet y^{\perp\perp}&=%
\pi_{x^{\perp\perp}}(y^{\perp\perp})=%
x^{\perp\perp}\wedge (x^{\perp}\vee y^{\perp\perp})
=%
\left(x^{\perp}\sqcup (x^{\perp}\vee y^{\perp\perp})^{\perp}%
\right)^{\perp}\\
&=%
\left(x^{\perp}\sqcup (x^{\perp}\sqcup y^{\perp\perp})^{\perp}%
\right)^{\perp}=%
\left(x^{\perp}\sqcup (x^{\perp}\sqcup y)^{\perp}%
\right)^{\perp}.
\end{align*}

\end{proof}

\medskip
\begin{proposition}\label{rqinv}\cite[Proposition 2]{BPS2025}
	Let $X$ be a complete orthomodular lattice. Then 
	$X$ is a left $\Cat{Lin}(X)$-module and also a right $\Cat{2}$-module, 
    where $\Cat{2}$ is the {\bfseries two-element} unital quantale.
\end{proposition}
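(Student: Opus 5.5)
The plan is to equip $X$ with the action given by evaluation of linear maps. For $f\in\Cat{Lin}(X)$ and $a\in X$ put $f\bullet a:=f(a)$. For the right $\Cat{2}$-action use the same formula as in Theorem~\ref{th4th5}: $a\circ e=a$ and $a\circ 0=0$.

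\textbf{Left module axioms.} First I will check that every $f\in\Cat{Lin}(X)$ preserves arbitrary joins.
\begin{itemize}
\item Since $f$ has an orthogonality adjoint $f^{*}$, for every $y\in X$ we have $f(\bigvee S)\le y$ iff $f(\bigvee S)\perp y^{\perp}$.
\item By linearity this holds iff $\bigvee S\perp f^{*}(y^{\perp})$.
\item This holds iff $s\perp f^{*}(y^{\perp})$ for all $s\in S$, because $a\perp b$ means $a\le b^{\perp}$, which is stable under joins in the first argument.
\item Running the same equivalences backwards for each $s$, this holds iff $f(s)\le y$ for all $s\in S$.
\end{itemize}
Hence $f(\bigvee S)=\bigvee_{s\in S} f(s)$, which is (A1). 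Axiom (A2) holds by definition, since joins in $\Cat{Lin}(X)$ are computed pointwise. Axiom (A3) is $(u\circ v)(a)=u(v(a))$, which is the definition of $\odot$ as composition. Axiom (A4) is $\operatorname{id}_X(a)=a$.

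\textbf{Pointwise joins stay in $\Cat{Lin}(X)$.} This is needed for (A2) to make sense, and it is the one genuinely nontrivial point. Here I rely on the cited fact that $\Cat{Lin}(X)$ is a Foulis quantale with pointwise joins. If I had to verify it, I would take $g:=\bigsqcap_i f_i^{*}$ as a candidate adjoint for $\bigsqcup_i f_i$, where the meet is taken pointwise in $X$. One then checks $\bigvee_i f_i(x)\le y^{\perp}$ iff $x\le f_i^{*}(y)^{\perp}$ for all $i$ iff $x\le\bigl(\bigvee_i f_i^{*}(y)\bigr)^{\perp}$. So the correct adjoint is $y\mapsto\bigvee_i f_i^{*}(y)$, and completeness of $X$ is what guarantees that this map exists.

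\textbf{Right $\Cat{2}$-module axioms.} These are immediate.
\begin{itemize}
\item (A1): $(\bigvee S)\circ b$ equals $\bigvee S$ or $0=\bigvee_{s\in S}0$, according to $b$.
\item (A2): the join of a subset $T\subseteq\Cat{2}$ is $e$ iff $e\in T$, and $a\circ(\bigsqcup T)=\bigvee_{t\in T}a\circ t$ follows by cases. For $T=\varnothing$ both sides are $0$.
\item (A3): check the four cases of $b,b'\in\{0,e\}$.
\item (A4): $a\circ e=a$ holds by definition.
\end{itemize}

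I expect the only real obstacle to be the join-preservation of linear maps. It is handled by the adjunction argument above, which converts $\perp$ statements into order statements through $a\perp b\iff a\le b^{\perp}$ and the involutivity of $^{\perp}$.
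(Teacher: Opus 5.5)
Your proposal is correct and follows the standard argument behind this result, which the paper does not prove itself but quotes from BPS2025: the evaluation action $f\bullet a=f(a)$, join-preservation of each $f\in\Cat{Lin}(X)$ via the right order adjoint $y\mapsto f^{*}(y^{\perp})^{\perp}$, pointwise joins in $\Cat{Lin}(X)$ for (A2), and the same $\Cat{2}$-action $\circ$ as in Theorem~\ref{th4th5}. The only blemish is the stray ``candidate'' pointwise meet of the $f_i^{*}$ in your side remark, which you should drop, since your own computation correctly gives $(\bigsqcup_i f_i)^{*}=\bigsqcup_i f_i^{*}$.
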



\section{Involutive generalized dynamic algebras}\label{igedyal}

This section introduces the concept of an involutive generalized dynamic algebra, which serves as a foundational algebraic structure motivated by the properties found in Foulis quantales and function-based orthomodular dynamic algebras. This framework extends the properties of involutive unital quantales by incorporating an additional unary operation, ${\sim}$, which captures a form of test or projection relevant to non-classical logic and quantum computing models.


        \medskip
For a comprehensive background on generalized dynamic algebras, we refer the reader to \cite{KRSZ,Rad2025}.

\medskip

\begin{definition}\label{def:igda}
An \emph{involutive generalized dynamic algebra} is a tuple \(\mathfrak{K} = (K, \bigsqcup, \odot, {}^{*}, {\sim}, e)\) satisfying the following conditions:
\medskip 
\begin{enumerate}[label=(\textbf{IDA\arabic*}),leftmargin=1.57905cm]
    \item \((K, \bigsqcup, \odot, {}^{*}, e)\) is an involutive unital quantale.
    \item[]\({\sim} \colon K \to K\) is a  unary operation on \(K\) such that 
    \item ${\sim}(x\odot {\sim}\, {\sim} y)=%
    {\sim}(x\odot y)$ for all $x, y\in K$,
    \item ${\sim}\Big(\bigsqcup_{i \in I}  \,{\sim}\, {\sim} x_{i}\Big)
\;=\;
{\sim}\Big(\bigsqcup_{i \in I}  x_{i}\Big)$ 
for all indexed families $\{x_i\}_{i\in I}\subseteq K$,
   \item   $({\sim} x)^{*}={\sim} x$ for all $x\in K$, 
   \item ${\sim}\, {\sim} ({\sim}\, {\sim} x\odot y)=%
   {\sim}\left({\sim} x\sqcup {\sim}({\sim}x\sqcup y)\right)$ for all $x, y\in K$.
\end{enumerate}
\end{definition}
\medskip
A Foulis quantale is an involutive generalized dynamic algebra by Lemma~\ref{lem:perp-0-e}, and every involutive generalized dynamic algebra is, in turn, a generalized dynamic algebra in the sense defined by \cite{KRSZ}.

\medskip
We now present several constructions involving an involutive generalized dynamic algebra \(\mathfrak{K} = (K, \bigsqcup, \odot, {}^{*}, {\sim}, e)\) .
\begin{equation*}
\begin{split}
\widetilde K & \stackrel{\text{def}}{=} \{ {\sim} k \mid k \in K \} \\
\bigvee W & \stackrel{\text{def}}{=} \,\, {\sim} \left({\sim} \bigsqcup W\right), \, \text{for any}\, W \subseteq \widetilde K\\
\bigwedge W & \stackrel{\text{def}}{=} \,\, {\sim} \bigsqcup \left\{ { {\sim} w:w \in W} \right\}
, \, \text{for any}\, W \subseteq \widetilde K\\
w^{\perp} & \stackrel{\text{def}}{=} \,\, {\sim} w, %
\, \text{for any}\, w \in \widetilde K\\
\preceq \,\,\,\,\, & \stackrel{\text{def}}{=} \,\, \left\{ {\left( {k,l} \right) \in \widetilde K \times \widetilde K \mid \bigvee \left\{ {k,l} \right\} = l} \right\}.\\
\end{split}
\end{equation*}

For a fixed element ${k \in K}$, we define a unary operation ${\ulcorner k \urcorner : K \to K}$. This operation is formally given by:
$$ \ulcorner k \urcorner \left( l \right) =\,\, {\sim} \left( { {\sim} \left( {k \odot l} \right)} \right) $$
for every ${l \in K}$. Building on this, we introduce an equivalence relation, denoted by ${\equiv}$:
$$ \equiv \,\,\stackrel{\text{def}}{=} \,\, \left\{ {\left( {k,l} \right) \in K \times K \mid \ulcorner k \urcorner \left( w \right) = \ulcorner l \urcorner \left( w \right), \, \text{for every}\, w \in \widetilde K} \right\}. $$
This relation holds if two elements $k$ and $l$ produce the same result when their corresponding unary operations are applied to any ${w \in \widetilde K}$.

\medskip

\begin{definition}[Morphisms of involutive  generalized dynamic algebras]
Let  \(\mathfrak{K}_1 = (K_1, \bigsqcup_1, \odot_1, {}^{*}_1, {\sim_1}, e_1)\) and 
\(\mathfrak{K}_2 = (K_2, \bigsqcup_2, \odot_2, {}^{*}_2, {\sim_2}, e_2)\)
 be involutive generalized dynamic algebras.
A \emph{morphism} 
 $f:\mathfrak{K}_1\to\mathfrak{K}_2$
of involutive  generalized dynamic algebras is a
homomorphism of unital involutive quantales such that 
$f({\sim_1} x)\;=\;{\sim_2} f(x)$ for all $x\in K_1$. 
Involutive generalized dynamic algebras and their morphisms obviously form a category denoted by $\mathbb{IDA}$.
\end{definition}

\medskip

Now, we make precise how the dynamics encoded in the unital involutive quantale $K$ acts on the test set $\widetilde K$. 
We introduce the canonical left action “apply then reflect to tests,” given by $(k,v)\mapsto {\sim}{\sim}(k\odot v)$, and verify that it equips $\widetilde K$ with the structure of a $K$-module.
\medskip

\begin{theorem}\label{thm:K-module}
Let \(\mathfrak{K} = (K, \bigsqcup, \odot, {}^{*}, {\sim}, e)\) 
 be an involutive  generalized dynamic algebra.  We define an action 
$\bullet : K\times \widetilde{K}\to \widetilde{K}$ such that
\[
k\bullet v = \ulcorner k \urcorner(v) = {\sim}\,{\sim} (k\odot v)
\]
for every $k\in K$ and $v\in \widetilde{K}$. 
Then $(\widetilde{K},\bigvee,\bullet)$ is a left $K$–module with 
the least element ${\sim}\,{\sim} 0$ and the greatest element 
${\sim}\,{\sim} 1$
such that 
${\sim}\,{\sim}\,{\sim} v={\sim} v$ for every $v\in {K}$.  
\end{theorem}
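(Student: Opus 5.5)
The plan is to derive everything from axioms (\textbf{IDA2}) and (\textbf{IDA3}), using (\textbf{IDA1}) for the distributivity of $\odot$ over $\bigsqcup$. I start with the last claim, since everything else rests on it. Putting $x=e$ in (\textbf{IDA2}) gives ${\sim}(e\odot{\sim}\,{\sim} y)={\sim}(e\odot y)$, i.e. ${\sim}\,{\sim}\,{\sim} y={\sim} y$ for every $y\in K$. Consequently, every $v={\sim} k\in\widetilde K$ satisfies ${\sim}\,{\sim} v={\sim}\,{\sim}\,{\sim} k={\sim} k=v$. So ${\sim}\,{\sim}$ fixes $\widetilde K$ pointwise, and it maps $K$ onto $\widetilde K$ because ${\sim}\,{\sim} y={\sim}({\sim} y)$. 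In particular $\bigvee W={\sim}\,{\sim}\bigsqcup W$ and $k\bullet v={\sim}\,{\sim}(k\odot v)$ both land in $\widetilde K$, so both operations are well defined.

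Next I show that $(\widetilde K,\bigvee)$ is a complete $\bigvee$-semilattice. The operation $\bigvee$ is idempotent on singletons: $\bigvee\{v\}={\sim}\,{\sim} v=v$. For generalized associativity, take a family $W_i\subseteq\widetilde K$. Axiom (\textbf{IDA3}), after applying ${\sim}$ once more, gives ${\sim}\,{\sim}\bigsqcup_i{\sim}\,{\sim} x_i={\sim}\,{\sim}\bigsqcup_i x_i$. With $x_i=\bigsqcup W_i$ this yields $\bigvee_i\bigvee W_i=\bigvee\bigcup_i W_i$. Hence $\bigvee$ is the join for the order $\preceq$, and $\preceq$ coincides with $v\preceq w\iff\bigvee\{v,w\}=w$. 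The least element is $\bigvee\varnothing={\sim}\,{\sim} 0$. For the top element, take $v\in\widetilde K$. Using (\textbf{IDA3}) and $v={\sim}\,{\sim} v$, we get ${\sim}\,{\sim}(v\sqcup{\sim}\,{\sim} 1)={\sim}\,{\sim}(v\sqcup 1)={\sim}\,{\sim} 1$, so $v\preceq{\sim}\,{\sim} 1$. Since ${\sim}\,{\sim} 1\in\widetilde K$, it is the greatest element.

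Then I verify the module axioms, each by the same trick of inserting or removing ${\sim}\,{\sim}$ via (\textbf{IDA2}) or (\textbf{IDA3}).

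(\textbf{A1}): ${\sim}\,{\sim}(k\odot{\sim}\,{\sim}\bigsqcup W)={\sim}\,{\sim}(k\odot\bigsqcup W)$ by (\textbf{IDA2}). This equals ${\sim}\,{\sim}\bigsqcup_{w}(k\odot w)$ by (\textbf{Q2}), which equals ${\sim}\,{\sim}\bigsqcup_w{\sim}\,{\sim}(k\odot w)$ by (\textbf{IDA3}), i.e. $\bigvee_w k\bullet w$.

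(\textbf{A2}): this is symmetric, using (\textbf{Q3}) and (\textbf{IDA3}).

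(\textbf{A3}): $u\bullet(v\bullet a)={\sim}\,{\sim}(u\odot{\sim}\,{\sim}(v\odot a))$, which becomes ${\sim}\,{\sim}(u\odot v\odot a)=(u\odot v)\bullet a$ by (\textbf{IDA2}) and associativity.

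(\textbf{A4}): $e\bullet a={\sim}\,{\sim} a=a$, by the fixed-point property.

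I expect the main obstacle to be the bookkeeping in the semilattice part. One must check that $\bigvee$ really is the join for $\preceq$, and in particular identify the top element as ${\sim}\,{\sim} 1$ rather than as ${\sim}\,{\sim}\bigsqcup\widetilde K$. The inequality $v\preceq{\sim}\,{\sim} 1$ above handles this, because ${\sim}\,{\sim}\bigsqcup\widetilde K$ is then also an upper bound lying in $\widetilde K$, and so the two coincide. Axioms (\textbf{IDA4}) and (\textbf{IDA5}) are not needed here.
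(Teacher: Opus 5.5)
Your proposal is correct and follows essentially the same route as the paper. You obtain ${\sim}\,{\sim}\,{\sim}={\sim}$ from (\textbf{IDA2}) with $x=e$, build the lattice structure on $\widetilde K$ from (\textbf{IDA3}), and verify (\textbf{A1})--(\textbf{A4}) by inserting or removing ${\sim}\,{\sim}$ via (\textbf{IDA2}) and (\textbf{IDA3}). The only difference is in packaging the lattice part: you show idempotence on singletons and generalized associativity of $\bigvee$, then appeal to the standard fact that this makes $\bigvee$ the supremum for $\preceq$, whereas the paper checks transitivity of $\preceq$ and the least-upper-bound property directly, which are precisely the steps of that standard argument.
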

\begin{proof}  Let $v\in K$. We compute:
\begin{align*}
{\sim}\, {\sim}\, {\sim} v=
    {\sim}(e\odot {\sim}\, {\sim} v)\stackrel{(\textbf{IDA2})}{=}%
    {\sim}(e\odot v)={\sim} v.
\end{align*}    
Now, let us show that 
$(\widetilde{K},\bigvee)$ is a complete lattice. Clearly, 
${\sim}\,{\sim} 0, {\sim}\,{\sim} 1\in \widetilde{K}$ and $(\widetilde{K},\preceq)$ is a bounded ordered set. Namely, reflexivity and antisymmetry are evident. 
Let $x, y, z\in \widetilde{K}$ such that $x\preceq y$ and $y\preceq z$. Then 
$\bigvee \left\{ {x,y} \right\} = y$ and $\bigvee \left\{ {y,z} \right\} = z$. We compute: 
\begin{align*}
  \bigvee \left\{ {x,z} \right\} &=  \bigvee \left\{ {x, \bigvee \left\{ {y,z} \right\}} \right\}=%
  \bigvee \left\{ {x, {\sim}\,{\sim} \bigsqcup \left\{ {y,z} \right\}} \right\}%
  \stackrel{(\textbf{IDA3})}{=}%
  \bigvee \left\{ {x,  \bigsqcup \left\{ {y,z} \right\}} \right\}\\
  &={\sim}\,{\sim} \bigsqcup \left\{ {x,  \bigsqcup \left\{ {y,z} \right\}} \right\}=%
  {\sim}\,{\sim} \bigsqcup \left\{ {x,  {y,z}} \right\}=%
  {\sim}\,{\sim} \bigsqcup \left\{ {\bigsqcup \left\{ {x,y} \right\}}, z \right\}\\%
  &\stackrel{(\textbf{IDA3})}{=}{\sim}\,{\sim} \bigsqcup  \left\{{\sim}\,{\sim} {\bigsqcup \left\{ {x,y} \right\}}, z \right\}=%
  {\sim}\,{\sim} \bigsqcup \left\{ y, z \right\}=\bigvee \left\{ {y,z} \right\} = z\\%
\end{align*}
and, for every $w\in \widetilde{K}$,
\begin{align*}
\bigvee \left\{ {w,{\sim}\,{\sim} 0} \right\} &\stackrel{(\textbf{IDA3})}{=}%
    {\sim}\,{\sim}\, {\bigsqcup \left\{ {w,0} \right\}}= {\sim}\,{\sim}\, w=w,\\
    \bigvee \left\{ {w,{\sim}\,{\sim} 1} \right\} &\stackrel{(\textbf{IDA3})}{=}%
    {\sim}\,{\sim}\, {\bigsqcup \left\{ {w,1} \right\}}= {\sim}\,{\sim}\, 1.
\end{align*}


Assume that $S\subseteq \widetilde{K}$. Let us show that 
$v={\sim}\,{\sim}\Big(\bigsqcup S\Big)$ is the join of $S$ in $\widetilde{K}$. 

Let $s\in S$. We compute:
\begin{align*}
    \bigvee \left\{ {s,v} \right\} &=  \bigvee \left\{ {s, \bigvee S} \right\}=%
  \bigvee \left\{ {s, {\sim}\,{\sim} \bigsqcup S} \right\}=%
  {\sim}\,{\sim} \bigsqcup \left\{ {s,  \bigsqcup S} \right\}= %
  {\sim}\,{\sim} \bigsqcup S=v.
\end{align*}
 Hence $v$    is an upper bound of $S$ in $\widetilde{K}$. Now, let 
 $w$    be an upper bound of $S$ in $\widetilde{K}$. Then 
 $\bigvee \left\{ {s,w} \right\} = w$ for all $s\in S$. We compute: 
 \begin{align*}
    \bigvee \left\{ {w,v} \right\} &=  \bigvee \left\{ {w, \bigvee S} \right\}=%
  \bigvee \left\{ {w, {\sim}\,{\sim} \bigsqcup S} \right\}%
  \stackrel{(\textbf{IDA3})}{=}%
  {\sim}\,{\sim} \bigsqcup \left\{ {w,  \bigsqcup S} \right\}\\
  &= {\sim}\,{\sim} \bigsqcup \left\{ {w  \sqcup s} \mid s\in S \right\}%
  \stackrel{(\textbf{IDA3})}{=}
  {\sim}\,{\sim} \bigsqcup \left\{ {\sim}\,{\sim} ({w  \sqcup s}) \mid s\in S \right\}=w.
\end{align*}
We conclude that $v$ is the least upper bound of $S$ in $\widetilde{K}$.

The four left module axioms are verified below using iterative application of the definitions for $\bigvee$ and $\bullet$.

\medskip

\noindent{}(\textbf{A1}): Let $S\subseteq \widetilde{K}$ and $v\in K$. 
We compute:
\begin{align*}
v\bullet \left(\bigvee S\right) &= {\sim}\,{\sim} \left(v\odot \left({\sim}\,{\sim} \left(\bigsqcup S\right)\right)\right)%
\stackrel{(\textbf{IDA2})}{=} {\sim}\,{\sim} \left(v\odot \left(\bigsqcup S\right)\right)%
\\
&= {\sim}\,{\sim} \left(\bigsqcup\{ v\odot s\mid s\in S\}\right)%
\stackrel{(\textbf{IDA3})}{=}{\sim}\,{\sim} \left(\left(\bigsqcup\{ v\bullet s\mid s\in S\}\right)\right)= \bigvee_{s\in S}(v\bullet s).
\end{align*}

\medskip

\noindent{}(\textbf{A2}): Let $T\subseteq {K}$ and $a\in \widetilde K$. 
We compute:
\begin{align*}
\left(\bigsqcup T\right)&\bullet a = {\sim}\,{\sim} \left(\left(\bigsqcup T\right) \odot a\right)%
= {\sim}\,{\sim} \left(\left(\bigsqcup \{t\odot a\mid t\in T\}\right) \right)
\\
&\stackrel{(\textbf{IDA3})}{=} {\sim}\,{\sim} \left(\left(\bigsqcup \{t\bullet a\mid t\in T\}\right) \right)
= \bigvee_{t\in T}(t\bullet a).
\end{align*}

\medskip

\noindent{}(\textbf{A3}): Let $u,v\in K$ and $a\in \widetilde K$. 
We compute:
\begin{align*}
u\bullet(v\bullet a)&=u\bullet({\sim}\,{\sim}(v\odot a))
={\sim}\,{\sim}\big(u\odot {\sim}\,{\sim}(v\odot a)\big)%
\stackrel{(\textbf{IDA2})}{=}{\sim}\,{\sim}\big(u\odot (v\odot a)\big)\\
&{=}{\sim}\,{\sim}\big((u\odot v)\odot a\big)=%
(u\odot v)\bullet a.
\end{align*}

\noindent{}(\textbf{A4}): Let $a\in \widetilde K$. We compute:
\begin{align*}
e\bullet a={\sim}\,{\sim}(e\odot a)={\sim}\,{\sim} a\stackrel{(\textbf{IDA2})}{=}a.
\end{align*}
\end{proof}

The following proposition establishes key structural properties concerning the  map ${\sim}\,{\sim}$ and the associated equivalence relation $\equiv$ within an involutive generalized dynamic algebra, specifically demonstrating its module and congruence properties.

\medskip

\begin{proposition}\label{prop:K-module}
Let \(\mathfrak{K} = (K, \bigsqcup, \odot, {}^{*}, {\sim}, e)\) 
 be an involutive  generalized dynamic algebra. 
Then ${\sim}\,{\sim} \colon K \to \widetilde{K}$ is a surjective homomorphism of left $K$–modules and $ \equiv$ is a quantale congruence. 
\end{proposition}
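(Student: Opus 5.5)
We split the statement into two parts: that ${\sim}\,{\sim}$ is a surjective module homomorphism $K\to\widetilde K$, and that $\equiv$ is a quantale congruence on $K$.

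\textbf{The map ${\sim}\,{\sim}$.} Here $K$ is regarded as a left $K$-module via $\odot$, and $\widetilde K$ carries the module structure of Theorem~\ref{thm:K-module}.

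Surjectivity follows from the identity ${\sim}\,{\sim}\,{\sim} v={\sim} v$ proved in Theorem~\ref{thm:K-module}. Every $w\in\widetilde K$ has the form $w={\sim} k$, so ${\sim}\,{\sim} w={\sim}\,{\sim}\,{\sim} k={\sim} k=w$. Thus ${\sim}\,{\sim}$ restricts to the identity on $\widetilde K$, and in particular it maps onto $\widetilde K$.

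Join preservation: for $S\subseteq K$ we must show ${\sim}\,{\sim}\bigsqcup S=\bigvee_{s\in S}{\sim}\,{\sim} s$. By definition the right-hand side is ${\sim}\,{\sim}\bigsqcup_{s\in S}{\sim}\,{\sim} s$. Applying ${\sim}$ to both sides of (\textbf{IDA3}) gives ${\sim}\,{\sim}\bigsqcup {\sim}\,{\sim} x_i={\sim}\,{\sim}\bigsqcup x_i$, which is exactly what is needed.

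Action preservation: for $k,x\in K$ we need ${\sim}\,{\sim}(k\odot x)=k\bullet {\sim}\,{\sim} x={\sim}\,{\sim}(k\odot{\sim}\,{\sim} x)$. This is (\textbf{IDA2}) with ${\sim}$ applied once more to both sides.

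\textbf{The relation $\equiv$.} It is an equivalence relation because it is defined by equality of functions. It remains to check compatibility with arbitrary joins and with multiplication on both sides. The key observation is that, for $w\in\widetilde K$, $\ulcorner k\urcorner(w)=k\bullet w$ is precisely the module action.

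\emph{Joins.} Suppose $k_i\equiv l_i$ for all $i\in I$. By (\textbf{A2}),
\[
\Big(\bigsqcup k_i\Big)\bullet w=\bigvee_i (k_i\bullet w)=\bigvee_i (l_i\bullet w)=\Big(\bigsqcup l_i\Big)\bullet w .
\]

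\emph{Left multiplication.} If $k\equiv l$, then by (\textbf{A3}), $(u\odot k)\bullet w=u\bullet(k\bullet w)=u\bullet(l\bullet w)=(u\odot l)\bullet w$.

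\emph{Right multiplication.} This is the step needing care, since it is where one might worry that the relation is only a left congruence. For $k\equiv l$ and $u\in K$, (\textbf{A3}) gives $(k\odot u)\bullet w=k\bullet(u\bullet w)$. Because $u\bullet w\in\widetilde K$, the hypothesis $k\equiv l$ applies to it, yielding $k\bullet(u\bullet w)=l\bullet(u\bullet w)=(l\odot u)\bullet w$. So the obstacle dissolves once the action axioms from Theorem~\ref{thm:K-module} are available.

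Finally, compatibility with each operation separately gives compatibility with products in both arguments by transitivity. We also note that compatibility with the involution is not required, since a quantale congruence was defined only in terms of joins and multiplication.
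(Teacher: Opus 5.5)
Your proof is correct and follows essentially the paper's argument. Join and action preservation come from (\textbf{IDA3}) and (\textbf{IDA2}) with ${\sim}$ applied once more, and the congruence property rests on the same observation that the hypothesis can be applied at the test $u\bullet w\in\widetilde K$. You merely package the computation through the module axioms (\textbf{A2}) and (\textbf{A3}), split two-sided compatibility into left and right steps, and additionally supply the surjectivity argument via ${\sim}\,{\sim}\,{\sim}v={\sim}v$, which the paper leaves implicit.
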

\begin{proof} Let $S\subseteq K$. We compute:
$$
{\sim}\,{\sim}\left(\bigsqcup S\right) \stackrel{(\textbf{IDA3})}{=}%
{\sim}\,{\sim}\left(\bigsqcup \{{\sim}\,{\sim} s\mid s\in S\}\right)=\bigvee_{s\in S} {\sim}\,{\sim} \left(s\right).%
$$
Similarly, let $u, v\in K$. We compute:
$$
{\sim}\,{\sim}\left(u\odot v\right)%
\stackrel{(\textbf{IDA2})}{=}%
{\sim}\,{\sim}\left(u\odot {\sim}\,{\sim} v\right)=%
u\bullet {\sim}\,{\sim} \left(v\right).
$$

\medskip
Let $u\equiv v$, $s\equiv t$ and $w\in \widetilde{K}$. 
We compute:
\begin{align*}
    (u\odot s)\bullet w&=%
    {\sim}\,{\sim} \left((u\odot s)\odot w\right)=%
     {\sim}\,{\sim} \left(u\odot (s\odot w)\right)%
     \stackrel{(\textbf{IDA2})}{=}%
{\sim}\,{\sim} \left(u\odot {\sim}\,{\sim} (s\odot w)\right)\\%
&={\sim}\,{\sim} \left(u\odot {\sim}\,{\sim} (t\odot w)\right)=%
{\sim}\,{\sim} \left(v\odot {\sim}\,{\sim} (t\odot w)\right)%
\stackrel{(\textbf{IDA2})}{=}%
{\sim}\,{\sim} \left(v\odot  (t\odot w)\right)\\
&={\sim}\,{\sim} \left((v\odot  t)\odot w\right)%
=(v\odot t)\bullet w.
\end{align*}
Hence $u\odot s\equiv v\odot t$. Assume now that $x_i\equiv y_i$, where 
$x_i, y_i\in K$, $i\in I$ and $w\in  \widetilde{K}$. Then
\begin{align*}
{\sim}\,{\sim} \left(\left(\bigsqcup_{i\in I} x_i\right)\odot w\right)&=%
{\sim}\,{\sim} \left(\bigsqcup_{i\in I} \left(x_i\odot w)\right)\right)\stackrel{(\textbf{IDA3})}{=}%
{\sim}\,{\sim} \left(\bigsqcup_{i\in I} {\sim}\,{\sim}\left(x_i\odot w)\right)\right)\\
&=%
{\sim}\,{\sim} \left(\bigsqcup_{i\in I} {\sim}\,{\sim}\left(y_i\odot w)\right)\right)%
\stackrel{(\textbf{IDA3})}{=}%
{\sim}\,{\sim} \left(\bigsqcup_{i\in I} \left(y_i\odot w)\right)\right)\\
&={\sim}\,{\sim} \left(\left(\bigsqcup_{i\in I} y_i\right)\odot w\right).%
\end{align*}
We conclude that $\bigsqcup_{i\in I} x_i\equiv \bigsqcup_{i\in I} y_i$.
\end{proof}

We now introduce the concept of a semi-Foulis dynamic algebra, which extends the notion of an involutive generalized dynamic algebra by requiring the associated set of closed elements (elements of $\widetilde{K}$) to form a complete orthomodular lattice.

\medskip 

\begin{definition}\label{def:igda}
A \emph{semi-Foulis  dynamic algebra} is an involutive generalized dynamic algebra \(\mathfrak{K} = (K, \bigsqcup, \odot, {}^{*}, {\sim}, e)\) satisfying the following condition:
\medskip 
\begin{enumerate}[label=(\textbf{SFDA}),leftmargin=1.57905cm]
    \item $\mathfrak{\widetilde K}=(\widetilde K , {\preceq } , ^{\perp})$ is a complete orthomodular lattice. 
\end{enumerate}
\end{definition}
\medskip
A Foulis quantale is clearly a semi-Foulis  dynamic algebra. 
Moreover, for a  semi-Foulis  dynamic algebra $\mathfrak{K}$, standard Sasaki projections on $\widetilde{\mathfrak{K}}$ are exactly $\mathfrak{K}$-module actions on $\widetilde{\mathfrak{K}}$ induced by elements of $\widetilde K$.

\medskip

\begin{lemma} \label{Sasboth} Let 
    \(\mathfrak{K} = (K, \bigsqcup, \odot, {}^{*}, {\sim}, e)\) 
    be a semi-Foulis  dynamic algebra, $u\in \widetilde K$. 
    Then $u\bullet (-)=\pi_u$.
\end{lemma}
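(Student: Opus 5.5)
The plan is a direct computation that rewrites the module action $u\bullet v={\sim}\,{\sim}(u\odot v)$ into the lattice expression $u\wedge(u^{\perp}\vee v)$ inside the complete orthomodular lattice $\widetilde{\mathfrak K}=(\widetilde K,\preceq,{}^{\perp})$ given by (\textbf{SFDA}). The key ingredient is axiom (\textbf{IDA5}). It already has the shape of a Sasaki projection written with the operations ${\sim}$ and $\bigsqcup$. So the work consists of translating those operations into the lattice operations of $\widetilde K$.

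First, I record two identities that follow from Theorem~\ref{thm:K-module}. Since ${\sim}\,{\sim}\,{\sim} x={\sim} x$ for all $x\in K$, every $u={\sim}k\in\widetilde K$ satisfies ${\sim}\,{\sim} u=u$. Moreover, for an arbitrary $a\in K$ we have ${\sim} a={\sim}\bigl({\sim}\,{\sim} a\bigr)$. By the definitions preceding Theorem~\ref{thm:K-module}, for $w_1,w_2\in\widetilde K$ we have $w_1\vee w_2={\sim}\,{\sim}(w_1\sqcup w_2)$ and $w^{\perp}={\sim} w$. Hence, for $w_1,w_2\in\widetilde K$,
\[
{\sim}(w_1\sqcup w_2)={\sim}\bigl({\sim}\,{\sim}(w_1\sqcup w_2)\bigr)=(w_1\vee w_2)^{\perp}.
\]

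Next, I fix $u,v\in\widetilde K$ and apply (\textbf{IDA5}) with $x=u$ and $y=v$, using $u={\sim}\,{\sim}u$:
\[
u\bullet v={\sim}\,{\sim}(u\odot v)={\sim}\,{\sim}({\sim}\,{\sim}u\odot v)={\sim}\bigl({\sim}u\sqcup{\sim}({\sim}u\sqcup v)\bigr).
\]
Both ${\sim}u=u^{\perp}$ and $v$ lie in $\widetilde K$. By the identity above, the inner term is ${\sim}({\sim}u\sqcup v)=(u^{\perp}\vee v)^{\perp}$, which again lies in $\widetilde K$. Applying the identity a second time to the outer term gives
\[
u\bullet v=\bigl(u^{\perp}\vee(u^{\perp}\vee v)^{\perp}\bigr)^{\perp}.
\]
In any ortholattice, De Morgan's law together with $u^{\perp\perp}=u$ and $(u^{\perp}\vee v)^{\perp\perp}=u^{\perp}\vee v$ turns this into $u\wedge(u^{\perp}\vee v)=\pi_u(v)$. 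Since $v\in\widetilde K$ was arbitrary, $u\bullet(-)=\pi_u$.

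The only delicate point is the bookkeeping in the middle step. One must check that each quantity fed into the identity ${\sim}(w_1\sqcup w_2)=(w_1\vee w_2)^{\perp}$ really lies in $\widetilde K$, so that the join $\vee$ and orthocomplement ${}^{\perp}$ are those of $\widetilde{\mathfrak K}$. This holds because ${\sim}$ always lands in $\widetilde K$, and it is exactly what (\textbf{IDA2}) and (\textbf{IDA3}), via Theorem~\ref{thm:K-module}, guarantee. No appeal to Theorem~\ref{charsas} is needed, although one could alternatively verify its three conditions.
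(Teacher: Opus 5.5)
Your proposal is correct and matches the paper's proof: rewrite $u$ as ${\sim}\,{\sim}u$, apply (\textbf{IDA5}) with $x=u$, $y=v$, and read the result as $u\wedge(u^{\perp}\vee v)$ in $\widetilde{\mathfrak K}$. The only difference is that you spell out the step the paper leaves implicit, namely the identity ${\sim}(w_1\sqcup w_2)=(w_1\vee w_2)^{\perp}$ for $w_1,w_2\in\widetilde K$ followed by De Morgan's law.
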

\begin{proof} Since $(\widetilde K , {\preceq } , {}^{\perp})$ is a complete orthomodular lattice, for 
all $v\in \widetilde K$, 
    $$
u\bullet v=%
{\sim}\, {\sim} ({\sim}\, {\sim} u\odot v)\stackrel{(\textbf{IDA5})}{=}%
   {\sim}\left({\sim} u\sqcup {\sim}({\sim}u\sqcup v)\right)%
   =u\wedge (u^{\perp}\vee v) =\pi_u(v).$$
\end{proof}

\medskip

\begin{remark}
   Note that ${\sim}$ is an orthocomplementation on $\mathfrak{\widetilde K}$, but not on $\mathfrak K$.
\end{remark}
\medskip

We now present a construction detailing how to build an involutive unital quantale, $\mathscr{P}(L_\mathcal{M})$, from a complete orthomodular lattice $\mathcal{M}$ and an involutive submonoid $L_{\mathcal M}$ of the Foulis quantale $\mathbf{Lin}(\mathcal M)$, which specifically yields an example of an involutive generalized dynamic algebra.

\medskip
\begin{construction}\label{constrpm}
Let $\mathcal M=(M,\le,{}^\perp)$ be a complete orthomodular lattice, and let   
$L_{\mathcal M}=(L_{M}, \circ, {}^{*}, \mathrm{id}_M)$ be an involutive submonoid of the Foulis quantale 
$\mathbf{Lin}(\mathcal M)=(\mathbf{Lin}(M),\bigsqcup,\circ, {}^{*}, {}^{\perp}, \mathrm{id}_M)$. Assume that  
\[
    \{\pi_m:M\to M \mid m\in M\}\ \subseteq\ L_M\ \subseteq\ 
    \mathbf{Lin}( M),
    \qquad \pi_m(b)=m\wedge(m^\perp\vee b).
  \]

\medskip
\noindent
\emph{Remark.}
At this stage, $L_M$ is \emph{part of the input data}  associated
with $\mathcal M$ and is \emph{not} uniquely determined by $\mathcal M$ in general.
Therefore,  the quantale $\mathscr P(L_{\mathcal M})$, constructed below, depends on the pair $(\mathcal M,L_M)$.

When a canonical choice is needed (see Subsection \ref{sec:concrete-T}, Proposition \ref{prop:T-axioms-hold}), we will take the \emph{least} involutive submonoid of $\mathbf{Lin}(\mathcal M)$ containing all Sasaki projections, namely
\[
L_M^{\mathrm{can}}
:=\bigcap\Big\{\,S\subseteq \mathbf{Lin}(\mathcal M)\ \Big|\ 
S \text{ is an involutive submonoid and } \{\pi_m\mid m\in M\}\subseteq S\,\Big\},
\]
and then we set $L_M:=L_M^{\mathrm{can}}$.

\medskip

We put $\mathscr{P}(L_\mathcal{M})=\bigl(\mathscr{P}(L_{M}) ,\bigcup,\odot, ^ *, {\sim}, \{\mathrm{id}_M\})$, where 
\medskip

 \begin{enumerate}
     \item $\mathscr{P}( L_{M}) $ is the set of all subsets of 
     $L_{M}$,
     
     \item $\bigcup$ is the union of subsets of $L_{M}$, 
     
     \item $\odot$ is a binary operation on $\mathscr P( L_{ M} )$ defined by
  \[
    A\odot B=\{\,a\circ b \mid a\in A,\ b\in B\,\},
  \]
 
  \item ${}^{*}$ is a unary operation on $\mathscr P(L_{M})$ defined by
  \[
    A^{*}=\{\,a^{*}\mid a\in A\,\},
  \]

   \item ${\sim}$ is a unary operation on $\mathscr P(L_{M})$ defined by 
  \[
    {\sim} A=\Big\{\,\pi_{(\,\bigvee_{a\in A} a(1)\,)^{\!\perp}}\,\Big\}.
  \]

 It is well known that $\bigl(\mathscr{P}(L_{M}) ,\bigcup,\odot, ^ *, \{\mathrm{id}_M\})$ is  the free involutive unital quantale over involutive monoid $L_{\mathcal M}$ (see \cite{KP}).
 \end{enumerate}
 \end{construction}

\medskip

\begin{remark}\label{rem:canonical-Si}
The canonical choice \(L_M^{\mathrm{can}}\) introduced above coincides with
\(\mathbf{Si}(M)\) from Example~\ref{Sasaki is Involutive}. Indeed,
\(\mathbf{Si}(M)\) is an involutive submonoid of \(\mathbf{Lin}(\mathcal M)\)
containing all Sasaki projections. Conversely, if
\(S\subseteq \mathbf{Lin}(\mathcal M)\) is any involutive submonoid containing
all Sasaki projections, then \(S\) contains every finite composition of Sasaki
projections, hence \(\mathbf{Si}(M)\subseteq S\). Therefore,
\[
\begin{aligned}
L_M^{\mathrm{can}}
&=\bigcap\Big\{\,S\subseteq \mathbf{Lin}(\mathcal M)\ \Big|\
S \text{ is an involutive submonoid and }
\{\pi_m\mid m\in M\}\subseteq S\,\Big\} \\
&=\mathbf{Si}(M).
\end{aligned}
\]
\end{remark}

The following lemma formalizes the behavior of the operation (${\sim}{\sim}$)  within the constructed quantale $\mathscr{P}(L_\mathcal{M})$. It provides an explicit formula for ${\sim}{\sim} A$ in terms of the join of the initial values of all operators in the set $A$.

\medskip

\begin{lemma}\label{lemma4}
Let ${\mathcal{M}} = (M,\le,{}^{\bot})$ be a complete orthomodular lattice, 
$L_{\mathcal M}$ an involutive submonoid of \/ $\mathbf{Lin}(\mathcal M)$ containing all Sasaki projections, 
and let 
\(A\in \mathscr{P}( L_\mathcal{M})\).
Then
\[
{\sim}{\sim} A \;=\; \Bigl\{\,\pi_{\;\bigvee_{a\in A} a(1)}\Bigr\}.
\]
\end{lemma}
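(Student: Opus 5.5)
We apply the definition of ${\sim}$ in Construction~\ref{constrpm} twice. Put $m:=\bigvee_{a\in A} a(1)\in M$, which exists since $\mathcal M$ is complete. By definition,
\[
{\sim} A=\{\pi_{m^{\perp}}\}.
\]
This is a singleton subset of $L_M$, because $L_M$ contains all Sasaki projections. Applying ${\sim}$ once more to this singleton $B=\{\pi_{m^\perp}\}$ gives
\[
{\sim}{\sim}A={\sim}B=\Big\{\pi_{(\pi_{m^\perp}(1))^{\perp}}\Big\},
\]
since the join $\bigvee_{b\in B} b(1)$ over a one-element set is just $\pi_{m^\perp}(1)$.

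The only computation left is $\pi_{m^\perp}(1)$. By the definition of the Sasaki projection,
\[
\pi_{m^\perp}(1)=m^{\perp}\wedge(m^{\perp\perp}\vee 1)=m^\perp\wedge 1=m^\perp .
\]
Alternatively, this follows from Theorem~\ref{charsas}, which gives $\phi(1)=m^\perp$ for $\phi=\pi_{m^\perp}$.

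Hence $(\pi_{m^\perp}(1))^\perp=m^{\perp\perp}=m$ by the involutivity axiom of an ortholattice, and therefore ${\sim}{\sim}A=\{\pi_m\}=\{\pi_{\bigvee_{a\in A}a(1)}\}$.

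There is no genuine obstacle in this argument. The only points that need care are the following.
\begin{enumerate}
\item The definition of ${\sim}$ applies to the intermediate singleton, which requires $\pi_{m^\perp}\in L_M$. This is ensured by the hypothesis that $L_M$ contains all Sasaki projections.
\item The edge case $A=\varnothing$ is covered by the convention that the empty join is $0$. Then ${\sim}\varnothing=\{\pi_1\}=\{\mathrm{id}_M\}$, and ${\sim}{\sim}\varnothing=\{\pi_0\}$, where $\pi_0$ is the zero map. This agrees with the formula.
\end{enumerate}
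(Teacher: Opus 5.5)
Your proposal is correct and follows the paper's proof essentially step for step. Both unfold ${\sim}$ twice, compute $\pi_{m^\perp}(1)=m^\perp$ directly from the definition of the Sasaki projection, and conclude using $m^{\perp\perp}=m$. Your side remarks are also sound: the paper likewise notes that completeness of $\mathcal M$ and $\pi_{m^\perp}\in L_M$ are needed, and your check of the case $A=\varnothing$ correctly gives $\{\pi_0\}$.
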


\begin{proof}
By definition of ${\sim}$ on $\mathscr{P}( L_\mathcal M)$,
\[
{\sim} A \;=\; \Bigl\{\pi_{\bigl(\bigvee_{a\in A} a(1)\bigr)^{\bot}}\Bigr\}.
\]
Applying $\sim$ again to the singleton set yields
\[
{\sim}{\sim} A
={\sim}\Bigl\{\pi_{(\bigvee_{a\in A} a(1))^{\bot}}\Bigr\}
=\Bigl\{\pi_{\bigl(\,\bigl(\pi_{(\bigvee_{a\in A} a(1))^{\bot}}\bigr)(1)\,\bigr)^{\bot}}\Bigr\}.
\]
It remains to compute $\bigl(\pi_m\bigr)(1)$ for $m=(\bigvee_{a\in A} a(1))^{\bot}$. 
Since $\pi_m(x)=m\wedge(m^{\bot}\vee x)$, we obtain 
\[
\pi_m(1)=m\wedge(m^{\bot}\vee 1)=m\wedge 1=m.
\]
Therefore
\[
\bigl(\pi_{(\bigvee_{a\in A} a(1))^{\bot}}\bigr)(1)
=(\bigvee_{a\in A} a(1))^{\bot},
\]
and double orthocomplementation yields
\[
{\sim}{\sim} A
=\Bigl\{\pi_{\bigl((\bigvee_{a\in A} a(1))^{\bot}\bigr)^{\bot}}\Bigr\}
=\Bigl\{\pi_{\;\bigvee_{a\in A} a(1)}\Bigr\}.
\]
Completeness of $\mathcal{M}$ guarantees the join $\bigvee_{a\in A} a(1)$ exists, and since all Sasaki projections lie in $ L_M$, the resulting singleton is indeed an element of $\mathscr{P}( L_\mathcal M)$.
\end{proof}

\medskip

The following lemma confirms that the structure $\mathscr{P}(L_{\mathcal{M}})$, derived from a complete orthomodular lattice $\mathcal{M}$ and its associated operations, satisfies all the defining properties of an involutive generalized dynamic algebra.

\medskip

\begin{lemma}\label{PMisIDA}
     Let $\mathcal M=(M,\le,{}^\perp)$ be a complete orthomodular lattice 
     and $L_{\mathcal M}$ an involutive submonoid 
    of \/ $\mathbf{Lin}(\mathcal M)$ containing all Sasaki projections. 
     Then the structure $\mathscr{P}( L_\mathcal{M})=\bigl(\mathscr{P}( L_{M}) ,\bigcup,\odot, ^ *, {\sim}, \{\mathrm{id}_M\})$ is an involutive generalized dynamic algebra.
 \end{lemma}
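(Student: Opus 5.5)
We verify (\textbf{IDA1})--(\textbf{IDA5}) directly. The whole argument rests on the single observation that ${\sim}A$ depends on $A$ only through the element
\[
\sigma(A):=\bigvee_{a\in A} a(1)\in M ,
\]
together with Lemma~\ref{lemma4}, which gives ${\sim}{\sim}A=\{\pi_{\sigma(A)}\}$.

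(\textbf{IDA1}) is the cited fact that $\mathscr P(L_{\mathcal M})$ is the free unital involutive quantale over $L_{\mathcal M}$. If one wants it explicitly: union-distributivity of $\odot$ and ${}^*$ is immediate, $(a\circ b)^*=b^*\circ a^*$ holds in $\mathbf{Lin}(\mathcal M)$, and $L_M$ is closed under $\circ$ and ${}^*$.

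The first computation is $\sigma(\{\pi_m\})=\pi_m(1)=m$. Hence $\sigma({\sim}{\sim}A)=\sigma(A)$ and $\sigma({\sim}A)=\sigma(A)^\perp$.

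\textbf{(IDA3).} Since $\sigma(\bigcup_i X_i)=\bigvee_i\sigma(X_i)$, we get
\[
\sigma\Big(\bigcup_i{\sim}{\sim}X_i\Big)=\bigvee_i\sigma(X_i)=\sigma\Big(\bigcup_i X_i\Big),
\]
and applying ${\sim}$ to both sides gives the axiom.

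\textbf{(IDA4).} Sasaki projections are self-adjoint, $\pi_m^*=\pi_m$, by Remark~\ref{rem:two-adjoints-sasaki}.

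\textbf{(IDA2).} It suffices to show $\sigma(X\odot{\sim}{\sim}Y)=\sigma(X\odot Y)$. With $m=\sigma(Y)$, this reads
\[
\bigvee_{x\in X}x(m)=\bigvee_{x\in X,\,y\in Y}x(y(1)).
\]
Every linear map preserves arbitrary joins (it has a right order adjoint, namely $z\mapsto (x^*(z^\perp))^\perp$), so $x(m)=\bigvee_{y}x(y(1))$, and the identity follows.

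\textbf{(IDA5).} Write $p=\sigma(X)$ and $q=\sigma(Y)$. The left-hand side is $\{\pi_r\}$ with
\[
r=\sigma(\{\pi_p\}\odot Y)=\bigvee_y\pi_p(y(1))=\pi_p(q),
\]
again using join preservation of $\pi_p$. For the right-hand side, ${\sim}X=\{\pi_{p^\perp}\}$, so $\sigma({\sim}X\cup Y)=p^\perp\vee q$ and ${\sim}({\sim}X\cup Y)=\{\pi_{(p^\perp\vee q)^\perp}\}$. The union ${\sim}X\cup{\sim}({\sim}X\cup Y)$ therefore has $\sigma$-value $p^\perp\vee(p^\perp\vee q)^\perp$. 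Applying ${\sim}$ yields $\pi$ of
\[
\big(p^\perp\vee(p^\perp\vee q)^\perp\big)^\perp=p\wedge(p^\perp\vee q)=\pi_p(q)
\]
by De Morgan. The two sides agree.

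The only nontrivial ingredient, and the step needing care, is that every element of $L_M\subseteq\mathbf{Lin}(\mathcal M)$ preserves arbitrary joins in $M$. This is used in both (\textbf{IDA2}) and (\textbf{IDA5}), and follows from the existence of the adjoint as in Remark~\ref{rem:two-adjoints-sasaki}(2). The rest is bookkeeping with $\sigma$ and the identity $\pi_m(1)=m$.
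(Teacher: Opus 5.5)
Your proof is correct and takes essentially the paper's route: a direct check of (\textbf{IDA2})--(\textbf{IDA5}) via Lemma~\ref{lemma4}, $\pi_m(1)=m$, and the join-preservation of maps in $\mathbf{Lin}(\mathcal M)$, with the $\sigma(A)=\bigvee_{a\in A}a(1)$ bookkeeping just streamlining the same computations. Two small points in your favour: you start (\textbf{IDA5}) from the correct left-hand side ${\sim}{\sim}({\sim}{\sim}X\odot Y)$, where the paper's display begins with ${\sim}{\sim}(X\odot Y)$, and you justify join-preservation through the right order adjoint, which the paper only asserts.
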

 \begin{proof} It is enough to check conditions {(\textbf{IDA2})}-{(\textbf{IDA5})}. 

 \medskip

\noindent{(\textbf{IDA2})}: Let $X, Y\in \mathscr{P}( L_\mathcal{M})$. Since  every element $x$ of $L_\mathcal{M}$ is join-preserving we compute:
\begin{align*}  
{\sim}(X\odot {\sim}\, {\sim} Y)&\stackrel{L. \ref{lemma4}}{=}%
{\sim}\left(X\odot \Bigl\{\,\pi_{\;\bigvee_{a\in Y} a(1)}\Bigr\}\right)=%
{\sim}\left(\bigcup\{x\circ \pi_{\;\bigvee_{a\in Y} a(1)}\mid x\in X\}\right)\\
   &=\Bigl\{\,\pi_{\;\left(\bigvee_{x\in X} x\circ \pi_{\;\bigvee_{a\in Y} a(1)}(1)\right)^{\perp}}\,\Bigr\}%
   =\Bigl\{\,\pi_{\;\left(\bigvee_{x\in X} x(\pi_{\;\bigvee_{a\in Y} a(1)}(1))\right)^{\perp}}\,\Bigr\}\\
&=\Bigl\{\,\pi_{\;\left(\bigvee_{x\in X} x(\bigvee_{a\in Y} a(1))\right)^{\perp}}\,\Bigr\}%
=\Bigl\{\,\pi_{\;\left(\bigvee_{x\in X}\bigvee_{a\in Y}  x(a(1))\right)^{\perp}}\,\Bigr\}\\
&=\Bigl\{\,\pi_{\;\left(\bigvee_{z\in X\odot Y}  z(1)\right)^{\perp}}\,\Bigr\}=
    {\sim}(X\odot Y).
\end{align*}

   \medskip

\noindent{(\textbf{IDA3})}: Let $X_i\in \mathscr{P}( L_\mathcal{M})$, $i\in I$. 
 We compute:
 \begin{align*}
    {\sim}\Big(\bigcup_{i \in I}  \,{\sim}\, {\sim} X_{i}\Big)&%
    \stackrel{L. \ref{lemma4}}{=}%
{\sim}\Big(\bigcup_{i \in I}
    \Bigl\{\,\pi_{\;\bigvee_{a\in X_i} a(1)}\Bigr\}\Big)=%
    \Big\{\,\pi_{\left(\,\bigvee\{\bigvee a(1)\mid a\in X_i, i\in I\}\right)^{\!\perp}}\,\Big\}\\
    &=\Big\{\,\pi_{\left(\,\bigvee\{\bigvee a(1)\mid a\in \bigcup_{i\in I}X_i\}\right)^{\!\perp}}\,\Big\}=%
    \,{\sim}\Big(\bigcup_{i \in I}  \, X_{i}\Big)
 \end{align*}
   {(\textbf{IDA4})}: We must check $({\sim} X)^{*} = {\sim} X$ for every  $X\in \mathscr{P}( L_\mathcal{M})$. Using the fact 
   that every Sasaki projection is self-adjoint, we compute:
 \begin{align*}
 ({\sim} X)^{*}=%
 \left(\Bigl\{\pi_{\bigl(\bigvee_{a\in X} a(1)\bigr)^{\bot}}\Bigr\}\right)^{*}=%
 \Bigl\{\left(\pi_{\bigl(\bigvee_{a\in X} a(1)\bigr)^{\bot}}\right)^{*}\Bigr\}=%
 \Bigl\{\pi_{\bigl(\bigvee_{a\in X} a(1)\bigr)^{\bot}}\Bigr\}={\sim} X.
\end{align*}

\medskip

\noindent{(\textbf{IDA5})}: Let $X, Y\in \mathscr{P}( L_\mathcal{M})$. We compute:
\begin{align*}  
{\sim\, {\sim} }(X\odot  Y)&\stackrel{%
(\textbf{IDA2})}{=}%
{\sim\, {\sim} }(X\odot {\sim}\, {\sim} Y)%
\stackrel{L. \ref{lemma4}}{=}%
{\sim\, {\sim} }\left(%
\Bigl\{\,\pi_{\;\bigvee_{a\in X} a(1)}\Bigr\}\odot \Bigl\{\,\pi_{\;\bigvee_{b\in Y} b(1)}\Bigr\}\right)\\
&=%
{\sim\, {\sim} }\left(%
\Bigl\{\,
\pi_{\;\bigvee_{a\in X} a(1)}\Bigr\}\odot %
\pi_{\;\bigvee_{b\in Y} b(1)}\Bigr\}\right)\\
&\stackrel{L. \ref{lemma4}}{=}%
\Bigl\{\,\pi_{\;\left(\bigvee_{a\in X} a(1)\right)\wedge \left((\bigvee_{b\in Y} b(1))\vee (\bigvee_{a\in X} a(1))^{\perp}\right)}\Bigr\}\\
\end{align*}
and 
\begin{align*}
\sim\left(\sim X\sqcup \right.&\left.\sim (\sim X\sqcup Y)\right)\\
&=%
\sim\left(\Big\{\,\pi_{(\,\bigvee_{a\in X} a(1)\,)^{\!\perp}}\,\Big\}%
\sqcup \sim \left( \Big\{\pi_{(\,\bigvee_{a\in X} a(1)\,)^{\!\perp}}%
\Big\} \sqcup  \Big\{\pi_{\;\bigvee_{b\in Y} b(1)}\Big\}
\right)
\right)\\
&=%
\sim\left(\Big\{\,\pi_{(\,\bigvee_{a\in X} a(1)\,)^{\!\perp}}\,\Big\}%
\sqcup \sim \left( \Big\{\pi_{(\,\bigvee_{a\in A} a(1)\,)^{\!\perp}}, %
\pi_{\;\bigvee_{b\in Y} b(1)}\Big\}
\right)
\right)\\
&=%
\sim\left(\Big\{\,\pi_{(\,\bigvee_{a\in X} a(1)\,)^{\!\perp}}\,\Big\}%
\sqcup \Big\{\pi_{\left(\,\bigvee_{a\in X} a(1)\,)^{\!\perp}%
\vee \bigvee_{b\in Y} b(1)\right)^{\perp}}\Big\}\right)\\
&=%
\sim\left(\Big\{\,\pi_{(\,\bigvee_{a\in X} a(1)\,)^{\!\perp}}\, , \pi_{\left(\,\bigvee_{a\in X} a(1)\,)^{\!\perp}%
\vee \bigvee_{b\in Y} b(1)\right)^{\perp}}\Big\}
\right)\\
&{=}%
\Bigl\{\,\pi_{\;\left(\bigvee_{a\in X} a(1)\right)\wedge \left((\bigvee_{b\in Y} b(1))\vee (\bigvee_{a\in X} a(1))^{\perp}\right)}\Bigr\}\\
\end{align*}
    \end{proof}

\medskip 

The following proposition shows that, for any complete
orthomodular lattice $\mathcal{M}$ and any involutive submonoid
$L_{\mathcal{M}}$ of the Foulis quantale $\mathbf{Lin}(\mathcal{M})$ containing all Sasaki projections, the
structure $\widetilde{\mathscr{P}(L_{\mathcal{M}})}$ is isomorphic to
$\mathcal{M}$.

\medskip

\begin{proposition}\label{prop:chi-iso-finitary}
Let ${\mathcal{M}} = \left( {{M},{ \le },\mathop {}\nolimits^{{ \bot }} } \right)$ be a complete orthomodular lattice, 
$L_{\mathcal M}$ an involutive submonoid of \/ $\mathbf{Lin}(\mathcal M)$ containing all Sasaki projections, and $\mathscr{P}( L_\mathcal{M})=\bigl(\mathscr{P}( L_{M}) ,\bigcup,\odot, ^ *, {\sim}, \{\mathrm{id}_M\})$  the corresponding  involutive generalized dynamic algebra.
Then $ \widetilde {\mathscr{P}( L_\mathcal{M})} =%
\left(\{{\sim} W \mid W \in \mathscr{P}( L_\mathcal{M})\},%
{\preceq } , {\sim}\right)$ is a complete  orthomodular lattice closed under ${\sim}$, and
the map $\delta \colon \mathcal{M} \to {\widetilde {\mathscr{P}( L_\mathcal{M})}}$, defined by $\delta(m)=\{\pi_{m}\}$ for each $m \in M$, is an order-isomorphism such that  $\delta(m^{{ \bot }} )={\sim} \delta(m)$.
\end{proposition}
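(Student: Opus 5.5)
First I identify the underlying set of $\widetilde{\mathscr{P}(L_\mathcal{M})}$ explicitly. By the definition of ${\sim}$ in Construction~\ref{constrpm}, every ${\sim}W$ is a singleton $\{\pi_n\}$ with $n=(\bigvee_{a\in W}a(1))^{\perp}$. Conversely, for $m\in M$ we have $\pi_m\in L_M$, and Lemma~\ref{lemma4} gives ${\sim}{\sim}\{\pi_m\}=\{\pi_{\pi_m(1)}\}=\{\pi_m\}$, since $\pi_m(1)=m$. Hence
\[
\widetilde{\mathscr{P}(L_\mathcal{M})}=\{\{\pi_m\}\mid m\in M\},
\]
and $\delta$ is surjective. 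For injectivity, note that $\pi_m(1)=m$, so $\{\pi_m\}=\{\pi_n\}$ forces $m=n$. Thus $\delta$ is a bijection. Closure under ${\sim}$ is immediate, because ${\sim}$ always produces such a singleton. Moreover,
\[
{\sim}\delta(m)=\{\pi_{(\pi_m(1))^{\perp}}\}=\{\pi_{m^{\perp}}\}=\delta(m^{\perp}).
\]

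Next I show that $\delta$ is an order isomorphism with respect to $\preceq$. For $m,n\in M$, the definition of $\bigvee$ on $\widetilde K$ together with Lemma~\ref{lemma4} gives
\[
\textstyle\bigvee\{\delta(m),\delta(n)\}={\sim}{\sim}\{\pi_m,\pi_n\}=\{\pi_{\pi_m(1)\vee\pi_n(1)}\}=\{\pi_{m\vee n}\}.
\]
Therefore $\delta(m)\preceq\delta(n)$ holds iff $\pi_{m\vee n}=\pi_n$. Evaluating at $1$ and using injectivity, this is equivalent to $m\vee n=n$, that is, to $m\le n$. The same computation applied to an arbitrary family yields $\bigvee_i\delta(m_i)=\delta(\bigvee_i m_i)$. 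This is consistent with Theorem~\ref{thm:K-module}, which already guarantees that $(\widetilde K,\preceq)$ is a complete lattice with joins $\bigvee$.

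Finally I transport the structure. Since $\delta$ is an order isomorphism from the complete orthomodular lattice $\mathcal M$ onto $(\widetilde{\mathscr{P}(L_\mathcal{M})},\preceq)$, and it intertwines ${}^{\perp}$ with ${\sim}$, every ortholattice axiom transfers along $\delta$. Concretely: meets are preserved by the order isomorphism; $x\wedge{\sim}x=0$ and $x\vee{\sim}x=1$ hold; ${\sim}$ is antitone and involutive; and orthomodularity carries over. Hence $(\widetilde{\mathscr{P}(L_\mathcal{M})},\preceq,{\sim})$ is a complete orthomodular lattice and $\delta$ is an ortholattice isomorphism.

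I expect the main subtlety to be the bookkeeping rather than any deep step. One point is checking that $\preceq$ is defined through the $\widetilde K$-join ${\sim}{\sim}\bigsqcup$, so that Lemma~\ref{lemma4} applies to the two-element set $\{\pi_m,\pi_n\}$. The other is the injectivity of $m\mapsto\pi_m$, which rests on the identity $\pi_m(1)=m$.
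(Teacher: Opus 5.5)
Your proposal is correct and follows essentially the same route as the paper. Both arguments use the explicit singleton form of ${\sim}W$ for surjectivity, $\pi_m(1)=m$ for injectivity, Lemma~\ref{lemma4} for binary joins, and the direct computation ${\sim}\delta(m)=\delta(m^{\perp})$, followed by transport of structure. You are slightly more careful than the paper in two places: you check explicitly that $\{\pi_m\}\in\widetilde{\mathscr{P}(L_{\mathcal M})}$, and you show that $\delta$ reflects, not just preserves, $\preceq$.
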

\begin{proof}First, we show that $\delta$ is a bijective map.

\medskip \noindent 
\noindent \textit{Injectivity}:
Let $m,n \in M$ and assume $\delta(m)=\delta(n)$. By the definition of $\delta$, this implies:
\[ \{\pi_m\}=\{\pi_n\}. \]
Thus, $\pi_{m} = \pi_{n}$. Since $\pi_{x}(1) = x$ for any $x \in M$, it follows that
\[ m = \pi_{m}(1) = \pi_{n}(1) = n. \]

Therefore, $\delta$ is injective.

\medskip\noindent
\noindent \textit{Surjectivity:}
Let $Y\in \widetilde{\mathscr P(L_{\mathcal M})}$. Then $Y={\sim} W$ for some $W\subseteq L_{\mathcal M}$.
By the explicit form of ${\sim}$ on arbitrary subsets,
\[
{\sim} W \;=\; \Bigl\{\ \pi_{\ \left(\bigvee_{a\in W} a(1)\right)^{\perp}}\ \Bigr\}.
\]

Set $m:=(\bigvee_{a\in W} a(1))^{\perp}\in M$. Then $Y=\{\pi_m\}=\delta(m)$.
Hence $\delta$ is onto.

\medskip\noindent{}From Theorem \ref{thm:K-module} we know that $\widetilde{\mathscr P(L_{\mathcal M})}$ 
is a complete lattice such that ${\sim}\,{\sim}\,{\sim} X={\sim} X$ for every $X\in \widetilde{\mathscr P(L_{\mathcal M})}$.

\medskip\noindent 
\noindent \textit{Preservation of finite joins}: 
Let $m,n \in M$. We need to show that $\delta(m \vee n) = \delta(m) \vee \delta(n)$.
In $\widetilde {\mathscr{P}( L_\mathcal{M})}$, the join of $\delta(m)=\{\pi_m\}$ and $\delta(n)=\{\pi_n\}$ is defined as:
\[ \delta(m)\vee\delta(n) = {\sim}{\sim}\left(\{\pi_m\}\cup\{\pi_n\}\right) = {\sim}{\sim}\{\pi_m,\pi_n\}. \]
By Lemma~\ref{lemma4}, we have:
\[ {\sim}{\sim}\{\,\pi_m,\pi_n\} = \{\,\pi_{\left(\,\pi_m(1)\vee \pi_n(1)\right)}\}. \]
Since $\pi_x(1) = x$, this further simplifies to:
\[ \{\,\pi_{\left(\,\pi_m(1)\vee \pi_n(1)\right)}\} = \{\,\pi_{m\vee n}\}. \]
Finally, by the definition of $\delta$:
\[ \{\,\pi_{m\vee n}\} = \delta(m\vee n). \]
Therefore, $\delta(m \vee n) = \delta(m) \vee \delta(n)$, demonstrating that $\delta$ preserves the join operation.

\medskip\noindent 
\noindent \textit{Preservation of Orthocomplement}:
Let $m \in M$. We need to show that $\delta(m^\perp) = {\sim}\delta(m)$.
By the definition of $\delta$:
\[ \delta(m^\perp) = \{\,\pi_{m^\perp}\,\}. \]
Now, consider ${\sim}\delta(m)$:
\[ {\sim}\delta(m) = {\sim}\{\pi_m\}. \]
Using the definition of the ${\sim}$ operation on a singleton set and given that $\pi_m(1) = m$, we have:
\[ {\sim}\{\pi_m\} = \left\{\,\pi_{\left(\pi_m(1)\right)^{\perp}}\right\} = \{\,\pi_{m^{\perp}}\,\}. \]
Combining these steps, we obtain:
\[ \delta(m^\perp) = \{\,\pi_{m^\perp}\,\} = {\sim}\{\,\pi_{m}\,\} = {\sim}\delta(m). \]
Thus, $\delta$ preserves the orthocomplement operation and since it is an order-preserving bijection we conclude that %
$\left(\widetilde {\mathscr{P}( L_\mathcal{M})},\preceq, {\sim}\right)$ is 
a complete orthomodular lattice isomorphic to ${\mathcal{M}}$. 
\end{proof}

The final result of the previous steps culminates in the following theorem, which proves that the constructed involutive generalized dynamic algebra $\mathscr{P}(L_\mathcal{M})$ is, in fact, a semi-Foulis dynamic algebra, thereby establishing a concrete example of this algebraic structure.

\medskip

\begin{theorem}\label{theorem:chi-iso-semifoulis}
 Let $\mathcal M=(M,\le,{}^\perp)$ be a complete orthomodular lattice 
     and $L_{\mathcal M}$ an involutive submonoid 
    of \/ $\mathbf{Lin}(\mathcal M)$ containing all Sasaki projections. 
     Then the involutive generalized dynamic algebra $\mathscr{P}( L_\mathcal{M})=\bigl(\mathscr{P}( L_{M}) ,\bigcup,\odot, ^ *, {\sim}, \{\mathrm{id}_M\})$   is a semi-Foulis dynamic algebra.
\end{theorem}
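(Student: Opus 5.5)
The plan is essentially to assemble results already proved. By Lemma~\ref{PMisIDA}, $\mathscr{P}(L_\mathcal{M})$ is an involutive generalized dynamic algebra. So the only thing left is condition (\textbf{SFDA}): the structure $(\widetilde{\mathscr{P}(L_\mathcal{M})},\preceq,{}^{\perp})$, with $W^{\perp}={\sim}W$, must be a complete orthomodular lattice.

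The first step is to observe that this triple is exactly the one in Proposition~\ref{prop:chi-iso-finitary}. The order $\preceq$ there is the order from the constructions preceding Theorem~\ref{thm:K-module}, namely $k\preceq l$ iff ${\sim}{\sim}(k\cup l)=l$. The orthocomplement ${}^{\perp}$ is the restriction of ${\sim}$ to $\widetilde{\mathscr{P}(L_\mathcal{M})}$. That proposition gives a bijection $\delta\colon M\to\widetilde{\mathscr{P}(L_\mathcal{M})}$, $\delta(m)=\{\pi_m\}$, which preserves binary joins (hence is order preserving) and satisfies $\delta(m^\perp)={\sim}\delta(m)$.

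The second step is to make sure $\delta$ is an order isomorphism in both directions, i.e.\ that $\delta^{-1}$ is also monotone. The direct route is to compute with Lemma~\ref{lemma4}. For $m,n\in M$ we have $\{\pi_m\}\preceq\{\pi_n\}$ iff ${\sim}{\sim}\{\pi_m,\pi_n\}=\{\pi_n\}$, that is, iff $\{\pi_{m\vee n}\}=\{\pi_n\}$. Since $\pi_x(1)=x$, the maps $x\mapsto\pi_x$ are injective, so this holds iff $m\vee n=n$, i.e.\ iff $m\le n$. Hence $\delta$ is an order isomorphism.

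The third step is transport of structure. $\mathcal M$ is a complete orthomodular lattice, and $\delta$ is an order isomorphism intertwining ${}^{\perp}$ with ${\sim}$. Therefore $(\widetilde{\mathscr{P}(L_\mathcal{M})},\preceq,{\sim})$ satisfies all the axioms as well. Completeness, boundedness, $w\wedge w^\perp=0$, $w\vee w^\perp=1$, antitonicity, involutivity and orthomodularity are all statements about order and ${}^{\perp}$, so they carry across. This is (\textbf{SFDA}), which finishes the proof.

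The only potential obstacle is a bookkeeping one. One must confirm that the lattice operations used in (\textbf{SFDA}) are the ones induced by $\preceq$: joins ${\sim}{\sim}\bigcup$, as Theorem~\ref{thm:K-module} shows, and meets defined from ${\sim}$ and $\bigcup$. These must agree with the order-theoretic meets transported along $\delta$, so that no separate verification of the meet formula is required. Since an order isomorphism preserves all existing joins and meets, and Theorem~\ref{thm:K-module} identifies $\bigvee$ as the least upper bound, this agreement follows immediately.
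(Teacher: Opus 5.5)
Your proposal is correct and matches the paper. The paper gives no separate proof of this theorem: it is the immediate combination of Lemma~\ref{PMisIDA} and Proposition~\ref{prop:chi-iso-finitary}. Your explicit check via Lemma~\ref{lemma4} that $\{\pi_m\}\preceq\{\pi_n\}$ iff $m\le n$ spells out the ``order-preserving bijection, hence isomorphism'' step that the proof of that proposition leaves implicit (there it follows from join preservation plus injectivity).

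Your closing remark about meets is unnecessary, since (\textbf{SFDA}) concerns only the order-theoretic structure of $(\widetilde{\mathscr{P}(L_\mathcal{M})},\preceq,{\sim})$. If you do want the formula $\bigwedge W={\sim}\bigcup\{{\sim}w\mid w\in W\}$ to agree with order-theoretic meets, that needs De~Morgan in the transported ortholattice together with ${\sim}{\sim}{\sim}={\sim}$ from Theorem~\ref{thm:K-module}; the fact that $\delta$ is an order isomorphism does not give it by itself.
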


\section{The category $\mathbb{\mathscr{T}ODA}$ of $\mathscr{T}$-based orthomodular dynamic algebras}\label{sectionTODA}

The first part of this section introduces $\mathscr{T}$-based orthomodular dynamic algebras, which expand upon the existing structures of orthomodular dynamic algebras and involutive  generalized dynamic algebras. (See \cite{KRSZ} for details on orthomodular dynamic algebras.)

In the second part of this section we present a concrete instance of the abstract functor $\mathscr{T}:\mathbb{IDA}\longrightarrow\mathbb{IM}$.

\subsection{$\mathscr{T}$-based orthomodular dynamic algebras}

\begin{definition}\label{def:IM}
Let $\mathbb{IM}$ be the category of \emph{involutive monoids}, defined as follows.

\begin{enumerate}
\item \textbf{Objects.}
An object of $\mathbb{IM}$ is a quadruple
\[
\mathbf{M}=(M,\cdot,e,{}^{*})
\]
such that $(M,\cdot,e)$ is a monoid and ${}^{*}:M\to M$ is a map satisfying, for all
$x,y\in M$,
\[
(x^{*})^{*}=x,
\qquad
(x\cdot y)^{*}=y^{*}\cdot x^{*}.
\]

\item \textbf{Morphisms.}
Given objects $\mathbf{M}=(M,\cdot,e,{}^{*})$ and $\mathbf{N}=(N,\odot,u,{}^{\dagger})$,
a morphism $f:\mathbf{M}\to\mathbf{N}$ in $\mathbb{IM}$ is a map
\[
f:M\to N
\]
such that, for all $x,y\in M$,
\[
f(x\cdot y)=f(x)\odot f(y),
\qquad
f(e)=u,
\qquad
f(x^{*})=f(x)^{\dagger}.
\]
Composition and identities are the same as those of $\mathbf{Set}$. Obviously, by definitions, for any $\mathbf{M}=(M,\cdot,e,{}^{*}) \in \mathbb{IM}$, it holds that $e^{*}=e$.

\end{enumerate}
\end{definition}

    \medskip 
    \noindent
    Consider the categories $\mathbb{IM}$, $\mathbb{IDA}$ and let   
    $\mathscr{T}$ 
    be a functor from $\mathbb{IDA}$ to 
    $\mathbb{IM}$ such that: 

    \medskip 
    \begin{enumerate}[label=(\textbf{T\arabic*}), leftmargin=1cm]
        \item $\widetilde{K} \subseteq \mathscr{T}({K}) \subseteq K%
    \text{ and } \mathscr{T}(\mathfrak{K})=%
    (\mathscr{T}({K}),\odot, {}^{*}, e) 
    \text{ is an involutive submonoid of } \mathfrak{K}$
         for every involutive  generalized dynamic algebra $\mathfrak{K} \in \mathbb{IDA}$. 
         \item For every semi-Foulis  dynamic algebra $\mathfrak{K} \in \mathbb{IDA}$ such that, for any $s,t\in \mathscr{T}( K)$, $s = t$ if and only if $s \equiv t$, 
         the involutive monoids $\mathscr{T}(\mathfrak{K})$ and $\mathscr{T}(\mathbf{Lin}(\widetilde{\mathfrak{K}}))$  are isomorphic in $\mathbb{IM}$ via a canonical map $\nu_{\mathfrak{K}}$ 
         defined by $\nu_{\mathfrak{K}}(k)=k \bullet (-)$ for all $k\in \mathscr{T}(\mathfrak{K})$.
         \item For every complete orthomodular lattice  $\mathcal M$, the involutive monoids  
         $\mathscr{T}(\mathbf{Lin}(\mathcal M))$ and 
         $\mathscr{T}\left(\mathscr{P}\left(\mathscr{T}(\mathbf{Lin}(\mathcal M))\right)\right)$ are isomorphic in $\mathbb{IM}$ via a canonical map $\mu_{\mathcal M}$ defined by $\mu_{\mathcal M}(f)=\{f\}$ for all $f\in \mathscr{T}(\mathbf{Lin}(M))$.
         \item For every morphism $f\colon \mathfrak{K}_1\to \mathfrak{K}_2$ of involutive  generalized dynamic algebras, $\mathscr{T}(f)$ is a restriction of $f$ to $\mathscr{T}(\mathfrak{K}_1)$.
    \end{enumerate}
    In particular  $\mathscr{T}$ preserves isomorphisms:
        \[
           \mathfrak{K} \cong \mathfrak{L} \text{ in } \mathbb{IDA} \implies \mathscr{T}(\mathfrak{K}) \cong %
           \mathscr{T}(\mathfrak{L}) \text{ in } \mathbb{IM}.
        \]

We assume these properties for the functor $\mathscr{T}$ throughout the rest of this paper.

\medskip

  \begin{definition}\label{finitary-goda}
A \textit{$\mathscr{T}$-based orthomodular dynamic algebra} is an involutive generalized dynamic algebra $\mathfrak{K} = (K , {\bigsqcup } , {\odot } , {}{^*}, {\sim}, e)$ satisfying the following conditions:
  
\begin{enumerate}[label=(\textbf{TODA\arabic*}), leftmargin=2cm]
    
    \item $\mathfrak{\widetilde K}=(\widetilde K , {\preceq } , {}^{\perp} )$ is a complete orthomodular lattice. 
    \item If a set $A$ meets these criteria:
    \begin{enumerate}[label=(\alph*), leftmargin=0.69cm]
        \item $\mathscr{T} \left( K \right) \subseteq A \subseteq K$.
        \item $A$ is closed under both the $\odot$ and ${{}^*}$ operations.
        \item $A$ is closed under $\bigsqcup$, meaning that for any subset $B$ of $A$,
        its join ($\bigsqcup B$) must also be in $A$.
    \end{enumerate}
    Then, $A$ must be equal to $K$. This condition ensures minimality.

    \item For any subsets $S , T \subseteq \mathscr{T} (K)$, their joins are equal ($\bigsqcup S = \bigsqcup T$) if and only if the sets themselves are equal ($S = T$). This ensures proper set equality.

    \item For any $s,t\in \mathscr{T}( K)$, $s = t$ if and only if $s \equiv t$. This condition guarantees completeness.
%
\end{enumerate}

\end{definition}
Clearly, every $\mathscr{T}$-based orthomodular dynamic algebra is a semi-Foulis dynamic algebra.
\medskip

We will now proceed to prove two instrumental lemmas. The first establishes the existence of a normal form for every element within a $\mathscr{T}$-based orthomodular dynamic algebra.
\medskip

\begin{lemma}\label{lemma2}
Let $\mathfrak{K}= (K, {\bigsqcup } , {\odot }, { {}^{{*}}} , {\sim},e)$ be a $\mathscr{T}$-based
orthomodular dynamic algebra. For each $k\in K$, there exists a unique
indexed set $\{\,s_i \mid i\in I\,\}\subseteq \mathscr{T} (K)$
such that
\[
k \;=\; \bigsqcup \{\,s_i \mid i\in I\,\}
\qquad\text{(equivalently, }k=\bigsqcup_{i\in I} s_i\text{).}
\]
\end{lemma}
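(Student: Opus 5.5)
Existence will come from the minimality axiom (\textbf{TODA2}), and uniqueness from (\textbf{TODA3}). Here ``unique indexed set'' is read as uniqueness of the underlying subset $\{s_i\mid i\in I\}\subseteq\mathscr{T}(K)$, since re-indexing or repeating elements does not change a join.

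\textbf{Existence.} Put
\[
A:=\Bigl\{\,\bigsqcup S \;\Bigm|\; S\subseteq \mathscr{T}(K)\,\Bigr\}\subseteq K .
\]
We check that $A$ satisfies the three hypotheses (a)--(c) of (\textbf{TODA2}).

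For (a), every $s\in\mathscr{T}(K)$ equals $\bigsqcup\{s\}$, so $\mathscr{T}(K)\subseteq A\subseteq K$.

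For (c), if $B\subseteq A$, choose for each $b\in B$ a set $S_b\subseteq\mathscr{T}(K)$ with $b=\bigsqcup S_b$. By associativity of joins in the complete lattice $K$,
\[
\bigsqcup B=\bigsqcup\Bigl(\bigcup_{b\in B}S_b\Bigr),
\]
and $\bigcup_{b\in B}S_b\subseteq\mathscr{T}(K)$. This also covers $B=\varnothing$, where $0=\bigsqcup\varnothing\in A$.

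For (b), use the quantale distributivity laws (\textbf{Q2}) and (\textbf{Q3}):
\[
\Bigl(\bigsqcup S\Bigr)\odot\Bigl(\bigsqcup T\Bigr)=\bigsqcup\{\,s\odot t\mid s\in S,\ t\in T\,\}.
\]
This lies in $A$ because $\mathscr{T}(\mathfrak K)$ is a submonoid by (\textbf{T1}), so each $s\odot t\in\mathscr{T}(K)$. Likewise, the involution preserves arbitrary joins, so
\[
\Bigl(\bigsqcup S\Bigr)^{*}=\bigsqcup\{s^{*}\mid s\in S\}\in A,
\]
since $\mathscr{T}(\mathfrak K)$ is closed under ${}^{*}$ by (\textbf{T1}).

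Hence (\textbf{TODA2}) gives $A=K$. So every $k\in K$ has the form $\bigsqcup S$ for some $S\subseteq\mathscr{T}(K)$, and indexing $S$ by itself gives the required family.

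\textbf{Uniqueness.} Suppose $k=\bigsqcup_{i\in I}s_i=\bigsqcup_{j\in J}t_j$ with all $s_i,t_j\in\mathscr{T}(K)$. Set $S=\{s_i\mid i\in I\}$ and $T=\{t_j\mid j\in J\}$. Then $\bigsqcup S=\bigsqcup T$, and (\textbf{TODA3}) gives $S=T$ directly.

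\textbf{Main obstacle.} The only step with any content is closure of $A$ under $\odot$. This is where (\textbf{T1}) is really needed: without $\mathscr{T}(\mathfrak K)$ being a submonoid, the products $s\odot t$ would not stay inside $\mathscr{T}(K)$ and $A$ would not be closed under multiplication. The remaining care is to state uniqueness at the level of the set $\{s_i\}$ rather than the indexing, because that is exactly what (\textbf{TODA3}) delivers.
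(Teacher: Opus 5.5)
Your proof is correct and follows essentially the same route as the paper. Existence comes from showing that the set of joins of subsets of $\mathscr{T}(K)$ satisfies hypotheses (a)--(c) of (\textbf{TODA2}), using distributivity of $\odot$ and ${}^{*}$ over joins together with (\textbf{T1}), and uniqueness comes directly from (\textbf{TODA3}). Your explicit appeal to (\textbf{T1}) and your base case stated as $\mathscr{T}(K)\subseteq A$ spell out what the paper leaves implicit.
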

\begin{proof}
Let
\[
A \;=\; \Bigl\{\, k\in K \;\Bigm|\; \exists I\ \exists \{t_i \mid i\in I\}\subseteq\mathscr{T} (K)
\text{ with } k \;=\; \bigsqcup \{t_i \mid i\in I\}\,\Bigr\}.
\]
Take arbitrary $k,l\in A$ with witnesses
\[
k=\bigsqcup \{t_i \mid i\in I\}, \qquad
l=\bigsqcup \{s_j \mid j\in J\},
\]
where $\{t_i \mid i\in I\},\{s_j \mid j\in J\}\subseteq \mathscr{T} (K)$.

\begin{enumerate}
\renewcommand{\labelenumi}{\alph{enumi}.}
\item (\emph{Base}) For any $v\in \mathscr{T} (K)$ we have
\[
v \;=\; \bigsqcup \{x \mid x\in \{v\}\},
\]
and since $\{v\}\subseteq \mathscr{T} (K)$, it follows $v\in A$.
Hence $\widetilde K \subseteq A \subseteq K$.

\item (\emph{Closure under operations})
\begin{itemize}
\item[$\odot$\ :]
\[
k\odot l
= \Bigl(\bigsqcup \{t_i \mid i\in I\}\Bigr)\odot
  \Bigl(\bigsqcup \{s_j \mid j\in J\}\Bigr)
= \bigsqcup \{\, t_i\odot s_j \mid i\in I,\ j\in J \,\},
\]
by complete distributivity of $\odot$ in each coordinate. Since
$\{\, t_i\odot s_j \mid i\in I,\ j\in J \,\}\subseteq \mathscr{T} (K)$,
we get $k\odot l\in A$.

\item[${}^{*}$\ :]
\[
k^{*}
= \Bigl(\bigsqcup \{t_i \mid i\in I\}\Bigr)^{*}
= \bigsqcup \{\, t_i^{*} \mid i\in I \,\},
\]
by complete distributivity of $^{*}$. As
$\{\, t_i^{*} \mid i\in I \,\}\subseteq \mathscr{T} (K)$, we have $k^{*}\in A$.
\end{itemize}

\item (\emph{Closure under arbitrary joins})
For a family $\{k_\alpha\}_{\alpha\in \Lambda}$ with
$k_\alpha=\bigsqcup\{t^{(\alpha)}_i \mid i\in I_\alpha\}$, $\{t^{(\alpha)}_i \mid i\in I_\alpha\}\subseteq \mathscr{T} (K)$, one has
\[
\bigsqcup_{\alpha\in \Lambda} k_\alpha
= \bigsqcup \Bigl\{\, t^{(\alpha)}_i \ \Bigm|\ \alpha\in\Lambda,\ i\in I_\alpha \Bigr\}\in A.
\]
\end{enumerate}
Thus $A$ satisfies the hypotheses of (\textbf{TODA2}), hence $A=K$.

For uniqueness, suppose
\[
k \;=\; \bigsqcup \{t_i \mid i\in I\} \;=\; \bigsqcup \{u_r \mid r\in R\},
\qquad
\{t_i \mid i\in I\},\{u_r \mid r\in R\}\subseteq \mathscr{T} (K).
\]
By (\textbf{TODA3}) (uniqueness of the join-decomposition from
$\mathscr{T} (K)$), we conclude
\[
\{t_i \mid i\in I\} \;=\; \{u_r \mid r\in R\}.
\]
Hence the representation is unique.
\end{proof}

\medskip

The following construction defines a related algebraic structure, $\mathscr{P}(\mathscr{T} (\mathfrak{K}))$, by considering the involutive submonoid of the test set $\mathscr{T}(K)$ of a $\mathscr{T}$-based orthomodular dynamic algebra $\mathfrak{K}$, and explicitly using the properties of the bijective map $h_{\mathfrak{K}}$ to induce new operations ($\sqdot$, ${}^{\star}$, $\sqtilde$).

\medskip

\begin{construction}\label{constrisom}
Let $\mathfrak{K}= (K, {\bigsqcup } , {\odot }, { {}^{{*}}} , {\sim},e)$ be a $\mathscr{T}$-based orthomodular dynamic algebra.
\medskip

Denote by $h_{\mathfrak{K}}$ a map from $K$ into 
    $\mathscr{P}(\mathscr{T} (K))$ defined by 
    $h_{\mathfrak{K}}(v)=\{w\in \mathscr{T} (K)\mid w\leq v\}$ for every $v\in K$. Clearly, $h_{\mathfrak{K}}$  correctly maps into its range and 
    from Lemma \ref{lemma2} we know that $\bigsqcup h_{\mathfrak{K}}(v)=v$ 
    and $ h_{\mathfrak{K}}(\bigsqcup A)=A$ for every $v\in K$ 
    and every $A\in \mathscr{P}(\mathscr{T} (K))$.
    We immediately obtain $h_{\mathfrak{K}}$ is both injective and surjective. 
\medskip

We put $\mathscr{P}(\mathscr{T} (\mathfrak{K}))=%
\bigl(\mathscr{P}(\mathscr{T} (K)),\bigcup,\sqdot, {}^{\star}, \sqtilde, \{e\})$, where 
\medskip

 \begin{enumerate}
     \item $\mathscr{P}(\mathscr{T} (K))$ is the set of all subsets of 
     $\mathscr{T} (K)$,
     
     \item $\bigcup$ is the union of subsets of $\mathscr{T} (K)$, 
     
   \item $\sqdot$ (the \emph{lifted product}) is a binary operation on $\mathscr{P}(\mathscr{T}(K))$ defined by
\[
  A\,\sqdot\,B
  =h_{\mathfrak{K}}\!\left(\bigsqcup A \odot \bigsqcup B\right).
\]

\item ${}^{\star}$ (the \emph{lifted involution}) is a unary operation on $\mathscr{P}(\mathscr{T}(K))$ defined by
\[
  A^{\star}
  =h_{\mathfrak{K}}\!\left(\left(\bigsqcup A\right)^{*}\right).
\]

\item $\sqtilde$ (the \emph{lifted negation}) is a unary operation on $\mathscr{P}(\mathscr{T}(K))$ defined by
\[
  \sqtilde A
  =h_{\mathfrak{K}}\!\left({\sim}\left(\bigsqcup A\right)\right).
\]

\end{enumerate}
\end{construction}

\medskip 

The following lemma provides a significant characterization of $\mathscr{T}$-based orthomodular dynamic algebras $\mathfrak{K}$, proving it is an atomic lattice whose atoms are precisely the test set $\mathscr{T}(K)$. Crucially, it then establishes an isomorphism between $\mathfrak{K}$ and the constructed structure $\mathscr{P}(\mathscr{T}(\mathfrak{K}))$, confirming that $\mathscr{P}(\mathscr{T}(\mathfrak{K}))$ is a $\mathscr{T}$-based orthomodular  dynamic algebra and that $h_{\mathfrak{K}}$ is the connecting algebraic homomorphism.

\medskip

\begin{lemma}\label{lemma-atomic}
Let $\mathfrak{K}= (K, {\bigsqcup } , {\odot }, { {}^{{*}}} , {\sim},e)$ be a $\mathscr{T}$-based
orthomodular dynamic algebra. Then 
\begin{enumerate}[label={\em(\textrm{{\roman*}})}, leftmargin=1cm]
    \item $\mathfrak{K}$ is an atomic lattice and its atoms are exactly 
    elements of $\mathscr{T} (K)$.
    \item The structure $\mathscr{P}(\mathscr{T} (\mathfrak{K}))=%
\bigl(\mathscr{P}(\mathscr{T} (K)),\bigcup,\sqdot, {}^{\star}, \sqtilde, \{e\})$ is  a $\mathscr{T}$-based
orthomodular dynamic algebra and 
    $\mathfrak{K}$ is isomorphic to $\mathscr{P}(\mathscr{T} (\mathfrak{K}))$ via $h_{\mathfrak{K}}$.
\end{enumerate}
\end{lemma}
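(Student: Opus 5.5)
For (i) I will use Lemma~\ref{lemma2} together with (\textbf{TODA3}). Recall that $\sqsubseteq$ is the order induced by $\bigsqcup$; the relation $w\leq v$ in Construction~\ref{constrisom} is read as $w\sqsubseteq v$. I first show that every $s\in\mathscr{T}(K)$ is nonzero. The bottom $0=\bigsqcup\varnothing$ satisfies $\bigsqcup\varnothing=\bigsqcup\{s\}$ if $s=0$, and (\textbf{TODA3}) would then force $\varnothing=\{s\}$, which is impossible. Next, $s$ is an atom. Suppose $0\neq k\sqsubseteq s$. By Lemma~\ref{lemma2}, $k=\bigsqcup T$ with $T\subseteq\mathscr{T}(K)$ and $T\neq\varnothing$. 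Then $s=s\sqcup k=\bigsqcup(\{s\}\cup T)$, so by (\textbf{TODA3}) we get $\{s\}\cup T=\{s\}$, hence $T=\{s\}$ and $k=s$. Conversely, any atom $a$ has the form $a=\bigsqcup T$ with $\varnothing\neq T\subseteq\mathscr{T}(K)$. Each $t\in T$ satisfies $0\neq t\sqsubseteq a$, so $t=a$ and therefore $a\in\mathscr{T}(K)$. Atomicity follows because every $k$ is the join of the elements of $\mathscr{T}(K)$ below it. The same (\textbf{TODA3}) argument also gives the identity used in Construction~\ref{constrisom}: if $k=\bigsqcup T$ and $s\in\mathscr{T}(K)$ with $s\sqsubseteq k$, then $k=\bigsqcup(T\cup\{s\})$, so $s\in T$. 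Hence $h_{\mathfrak K}(\bigsqcup T)=T$.

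For (ii), the key observation is that $h_{\mathfrak K}$ is a bijection with inverse $\bigsqcup$, and the operations $\sqdot$, ${}^{\star}$, $\sqtilde$ are by definition the transports of $\odot$, ${}^{*}$, ${\sim}$ along it. So I will check that $h_{\mathfrak K}$ preserves every operation. For joins, $h_{\mathfrak K}(\bigsqcup_i v_i)=\bigcup_i h_{\mathfrak K}(v_i)$. This follows from the identity $h_{\mathfrak K}(\bigsqcup T)=T$ applied to $T=\bigcup_i h_{\mathfrak K}(v_i)$, using $\bigsqcup T=\bigsqcup_i v_i$. Multiplication is preserved since $h_{\mathfrak K}(u)\sqdot h_{\mathfrak K}(v)=h_{\mathfrak K}(\bigsqcup h_{\mathfrak K}(u)\odot\bigsqcup h_{\mathfrak K}(v))=h_{\mathfrak K}(u\odot v)$, and the same computation works for ${}^{\star}$ and $\sqtilde$. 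For the unit, $h_{\mathfrak K}(e)=\{e\}$ by the identity above, since $e\in\widetilde K\subseteq\mathscr{T}(K)$ by (\textbf{T1}). Thus $h_{\mathfrak K}$ is an isomorphism of the full signatures, and $\mathscr{P}(\mathscr{T}(\mathfrak K))$ is an involutive unital quantale satisfying (\textbf{IDA2})--(\textbf{IDA5}), because these are equations transported through an isomorphism. (\textbf{TODA1}) also transfers, since the test lattice and $\preceq$ are defined from ${\sim}$ and $\bigsqcup$ alone.

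The main obstacle is that (\textbf{TODA2})--(\textbf{TODA4}) mention $\mathscr{T}$ applied to the new algebra. To handle this, I regard $h_{\mathfrak K}$ as an isomorphism in $\mathbb{IDA}$ and use functoriality together with (\textbf{T4}). Then $\mathscr{T}(h_{\mathfrak K})$ is the restriction of $h_{\mathfrak K}$ and is an isomorphism with inverse $\mathscr{T}(h_{\mathfrak K}^{-1})$, so $\mathscr{T}(\mathscr{P}(\mathscr{T}(K)))=h_{\mathfrak K}[\mathscr{T}(K)]=\{\{s\}\mid s\in\mathscr{T}(K)\}$. With this identification:
\begin{itemize}
\item (\textbf{TODA2}) transfers by taking preimages of a candidate set $A$ under $h_{\mathfrak K}$;
\item (\textbf{TODA3}) becomes the trivial fact that a union of singletons determines the set of those singletons;
\item (\textbf{TODA4}) transfers because $\equiv$ is defined from $\odot$ and ${\sim}$ and is therefore preserved by $h_{\mathfrak K}$.
\end{itemize}
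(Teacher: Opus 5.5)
Your proposal is correct and follows the paper's route: you prove (i) from Lemma~\ref{lemma2} together with the uniqueness in (\textbf{TODA3}), and you prove (ii) by transporting the structure along the bijection $h_{\mathfrak K}$. You are more explicit than the paper in identifying $\mathscr{T}(\mathscr{P}(\mathscr{T}(\mathfrak K)))=h_{\mathfrak K}[\mathscr{T}(K)]$ via (\textbf{T4}) and functoriality, which is what the paper's phrase ``preserved by isomorphisms'' tacitly relies on; one small nit is that $e\in\mathscr{T}(K)$ already holds because $\mathscr{T}(\mathfrak K)$ is a submonoid by (\textbf{T1}), so your unproved claim $e\in\widetilde K$ is unnecessary.
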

\begin{proof} (i): Let $v\in K$ be an atom of $\mathfrak{K}$. Then $v\not=0$. By Lemma \ref{lemma2},
there is a unique non-empty subset $\{t_i \mid i\in I\}\subseteq \mathscr{T} (K)$ such that $v=\bigsqcup_{i\in I}\{t_i\}$. Hence there is $i_0\in I$ such that $t_{i_0}\leq v$. Since $v$ is an atom, $t_{i_0}= v$. 
Therefore, every atom of $K$ lies in $\mathscr{T} (K)$. 

Conversely, assume that $w\in \mathscr{T} (K)$. 
Then $w=\bigvee\{w\}$. 
Clearly, by (\textbf{TODA3}), $w\not=0$ since $0=\bigvee\emptyset$. Assume that there is a
    $v\in K$, $v\leq w$ and $v\not=0$. Then there is $t\in \mathscr{T} (K)$, $t\not=0$  such that $t\leq v$. But $w=w\sqcup t$. Therefore, 
    by the uniqueness of the join-decomposition as shown in Lemma \ref{lemma2}, $w=t\leq v\leq w$. We conclude $v=w$. 
    
    Hence, the atoms of $\mathfrak{K}$ are exactly 
    elements of $\mathscr{T} (K)$ and again by Lemma \ref{lemma2},
    $\mathfrak{K}$ is an atomic lattice.

    \medskip
    \noindent(ii):  Clearly, $h_{\mathfrak{K}}$ is an order-preserving bijection with an inverse $\bigsqcup\colon \mathscr{P}(\mathscr{T} (K)) \to K$, and $(\widetilde K , {\preceq } , {}^{\perp} )$ is a complete orthomodular lattice. Hence the operations $\sqdot, {}^{\star}$ and $ \sqtilde$ are 
    correctly defined and $\mathscr{P}(\mathscr{T} (\mathfrak{K}))$ is a semi-Foulis dynamic algebra. Since conditions 
(\textbf{TODA2})-(\textbf{TODA4}) are preserved by  isomorphisms  of 
involutive  generalized dynamic algebras
    we conclude that $\mathscr{P}(\mathscr{T} (\mathfrak{K}))$ is 
    a $\mathscr{T}$-based orthomodular dynamic algebra.
\end{proof}

To establish the category of $\mathscr{T}$-based orthomodular dynamic algebras, it is necessary to define a morphism between  two such algebras. This will subsequently be referred to as a $\mathscr{T}\mathbb{ODA}$-morphism in the forthcoming definition.

\medskip

\begin{definition}\label{q-morphism}
Let $\mathfrak{K}_1 = (K_1 , {\bigsqcup }_1 , {\odot }_1, { {}^{{* _1}}} , {\sim_1},e_1)$ and $\mathfrak{K}_2 = (K_2 , {\bigsqcup }_2 , {\odot }_2 , { {}^{{* _2}}}, {\sim_2}, e_2)$ be $\mathscr{T}$-based orthomodular dynamic algebras. A $\mathscr{T}\mathbb{ODA}$–morphism $\phi :{\mathfrak{K}_1} \to {\mathfrak{K}_2}$ is a bijective morphism of involutive generalized dynamic algebras  $\phi :{K_1} \to {K_2}$.

\medskip

The category of $\mathscr{T}$-based orthomodular dynamic algebras and $\mathscr{T}\mathbb{ODA}$-morphisms is denoted by $\mathscr{T}\mathbb{ODA}$. Note that $\mathscr{T}\mathbb{ODA}$ is a subcategory of $\mathbb{IDA}$.
\end{definition}
\medskip 
The following lemma addresses the relationship between morphisms of $\mathscr{T}$-based orthomodular dynamic algebras and the corresponding ortholattice isomorphisms on their sets of closed elements. Specifically, it demonstrates that a $\mathscr{T}\mathbb{ODA}$–morphism between two such algebras induces an ortholattice isomorphism between their associated orthomodular lattices of closed elements.
\medskip
\begin{lemma}\label{morisorthoiso}
Let $\mathfrak{K}_1 = (K_1 , {\bigsqcup }_1 , {\odot }_1, { {}^{{* _1}}} , {\sim_1},e_1)$ and $\mathfrak{K}_2 = (K_2 , {\bigsqcup }_2 , {\odot }_2 , { {}^{{* _2}}}, {\sim_2}, e_2)$ be $\mathscr{T}$-based orthomodular dynamic algebras, and let $\phi :{\mathfrak{K}_1} \to {\mathfrak{K}_2}$ be a 
$\mathscr{T}\mathbb{ODA}$–morphism. Then the restriction 
$\widetilde{\phi}$ of $\phi$ to $\widetilde{K_1}$ is an ortholattice 
isomorphism between orthomodular lattices $(\widetilde K_1 , {\preceq }_1 , {}^{\perp_1} )$ and $(\widetilde K_2 , {\preceq }_2 , {}^{\perp_2}  )$.
\end{lemma}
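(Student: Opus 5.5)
The plan is to show, in order, that $\phi$ maps $\widetilde K_1$ into $\widetilde K_2$, that the restriction is a bijection onto $\widetilde K_2$, that it commutes with orthocomplementation, and that it preserves and reflects the order $\preceq$. All of this should follow from the facts that $\phi$ preserves $\bigsqcup$ and ${\sim}$ and is bijective.

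\emph{Mapping into and onto.} For $v={\sim_1}k\in\widetilde K_1$ we have $\phi(v)=\phi({\sim_1}k)={\sim_2}\phi(k)\in\widetilde K_2$. Hence $\widetilde\phi\colon\widetilde K_1\to\widetilde K_2$ is well defined, and it is injective as a restriction of the injective map $\phi$. For surjectivity, let $w={\sim_2}l\in\widetilde K_2$. Since $\phi$ is bijective, $l=\phi(k)$ for some $k\in K_1$, so $w={\sim_2}\phi(k)=\phi({\sim_1}k)$ with ${\sim_1}k\in\widetilde K_1$. Thus $\widetilde\phi$ is a bijection.

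\emph{Orthocomplement.} The orthocomplement on $\widetilde K_i$ is $w^{\perp_i}={\sim_i}w$. Therefore $\widetilde\phi(v^{\perp_1})=\phi({\sim_1}v)={\sim_2}\phi(v)=\widetilde\phi(v)^{\perp_2}$.

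\emph{Order.} Recall that $k\preceq_1 l$ iff $\bigvee_1\{k,l\}=l$, where $\bigvee_1\{k,l\}={\sim_1}{\sim_1}(k\sqcup_1 l)$. Since $\phi$ preserves binary joins and commutes with ${\sim}$, we get
\[
\phi\bigl({\sim_1}{\sim_1}(k\sqcup_1 l)\bigr)={\sim_2}{\sim_2}\bigl(\phi(k)\sqcup_2\phi(l)\bigr),
\]
that is, $\widetilde\phi(\bigvee_1\{k,l\})=\bigvee_2\{\widetilde\phi(k),\widetilde\phi(l)\}$. Consequently, $\bigvee_1\{k,l\}=l$ implies $\bigvee_2\{\phi(k),\phi(l)\}=\phi(l)$. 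Conversely, if $\bigvee_2\{\phi(k),\phi(l)\}=\phi(l)$, then $\phi(\bigvee_1\{k,l\})=\phi(l)$, and injectivity of $\phi$ gives $\bigvee_1\{k,l\}=l$. So $k\preceq_1 l\iff\widetilde\phi(k)\preceq_2\widetilde\phi(l)$, and all three conditions of an ortholattice isomorphism hold.

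\emph{Expected obstacle.} The only step needing care is the reflection of the order, since a bijective homomorphism need not in general have a monotone inverse. Here this is handled by the injectivity argument above, because $\preceq$ is defined equationally through $\bigvee$, which $\phi$ preserves. Everything else is direct unwinding of the definitions. By (\textbf{TODA1}) both $\widetilde K_i$ are complete orthomodular lattices, so the conclusion is stated in $\mathbb{COL}$.
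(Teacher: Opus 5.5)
Your proof is correct and follows the paper's argument essentially step by step. You get well-definedness and surjectivity from $\phi\circ{\sim_1}={\sim_2}\circ\phi$, injectivity by restriction, preservation of $\perp$ from preservation of ${\sim}$, and order via $\widetilde\phi(\bigvee_1\{k,l\})=\bigvee_2\{\widetilde\phi(k),\widetilde\phi(l)\}$. One small improvement: you explicitly use injectivity to show that $\widetilde\phi$ also reflects the order, whereas the paper only checks preservation, even though the definition of an ortholattice isomorphism requires the biconditional $m\preceq_1 n\Leftrightarrow\widetilde\phi(m)\preceq_2\widetilde\phi(n)$.
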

\begin{proof}Let $x\in \widetilde K_1$. Then $x={\sim_1} z$ for some $z\in K_1$. 
We compute:
$$
\phi(x)=\phi({\sim_1} z)={\sim_2} \phi(z)\in K_2.
$$
We conclude that $\widetilde{\phi}\colon \widetilde K_1\to \widetilde K_2$ 
is correctly defined. Since $\phi$ is injective, also its restriction 
$\widetilde{\phi}$ is injective. Let us show that $\widetilde{\phi}$ is surjective. 

Let $u\in \widetilde K_2$. Then $u={\sim_2} v$ for some $v\in K_2$. Since 
$\phi$ is surjective there is $x\in K_1$ such that $\phi(x)=v$. We compute:
$$
\phi({\sim_1} x)={\sim_2} \phi(x)={\sim_2} v=u.
$$
Hence $\widetilde{\phi}$ is surjective. 

\medskip

It remains to show that $\widetilde{\phi}$ is order-preserving. 
Let $x, y\in \widetilde{K_1} $ and $x{\preceq }_1 y$. Then 
$\bigvee_1 \left\{ {x,y} \right\} = y$. We compute:
\begin{align*}
    \bigvee_2 \left\{ {\widetilde{\phi}(x),\widetilde{\phi}(y)} \right\} &=%
   {\sim_2}\,{\sim_2} \bigsqcup_2 \left\{ {\widetilde{\phi}(x),\widetilde{\phi}(y)} \right\} =%
   {\sim_2}\,{\sim_2} \widetilde{\phi}\left(\bigsqcup_1 \left\{ {x,y} \right\}\right)\\%
   &=%
\widetilde{\phi}\left({\sim_1}\,{\sim_1}\bigsqcup_1 \left\{ {x,y} \right\}\right)=%
\widetilde{\phi}\left(\bigvee_1 \left\{ {x,y} \right\}\right)= %
\widetilde{\phi}(y).
\end{align*}
We conclude that $\widetilde{\phi}(x)\,{\preceq }_2\, \widetilde{\phi}(y)$. 
Hence $\widetilde{\phi}$ is order-preserving. 

\medskip

Since $\phi$ preserves ${\sim}$ we conclude that 
$\widetilde{\phi}$ is ${}^{\perp}$-preserving.
\end{proof}

\subsection{A concrete choice of the functor $\mathscr{T}$}
\label{sec:concrete-T}

 In our setting, $\mathscr{T}(\mathfrak{K})$
will be the involutive submonoid of the underlying involutive monoid $\mathfrak{K}$ generated by the distinguished subset
$\widetilde{{K}}$.

\medskip

\begin{definition}[The functor $\mathscr{T}$]\label{def:functor-T}
Let $\mathfrak{K}=(K,\bigsqcup,\odot,{}^*,{\sim},e)$ be an involutive generalized dynamic algebra
and let $\widetilde{{K}}\subseteq K$ be the test set of
$\mathfrak{K}$.

\begin{itemize}
  \item On objects, we define $\mathscr{T}(\mathfrak{K})$ as the involutive
    submonoid of the involutive monoid $(K,\odot,{}^*,e)$ generated by $\widetilde{{K}}$,
    i.e.
    \[
      \mathscr{T}({K})
      \ :=\
      \bigcap\{\,M\subseteq K \mid
        \mathcal M\text{ is an involutive submonoid of }(K,\odot,{}^*,e)
        \text{ and }\widetilde{{K}}\subseteq M\,\}.
    \]
    Equivalently, $\mathscr{T}({K})$ is the smallest subset of $K$
    containing $\widetilde{{K}}$ and closed under unit ($e$), multiplication ($\odot$), and
    involution (${}^*$).

  \item On morphisms, let $f:\mathfrak{K}_1\to\mathfrak{K}_2$ be a morphism
    in $\mathbb{IDA}$, with underlying map $f:K_1\to K_2$. Since
    $f(\widetilde{{K}}_1)\subseteq \widetilde{{K}}_2$ and
    $f$ preserves $\odot$, ${}^*$ and $e$, we have
    $f(\mathscr{T}({K}_1))\subseteq \mathscr{T}({K}_2)$
    (see Proposition~\ref{prop:T-axioms-hold} below), and we define
    \[
      \mathscr{T}(f)\ :=\
      f|_{\mathscr{T}({K}_1)}:
      \mathscr{T}({K}_1)\longrightarrow\mathscr{T}({K}_2).
    \]
\end{itemize}
\end{definition}
Evidently, $\mathscr{T}:\mathbb{IDA}\to\mathbb{IM}$ is indeed a functor.

\medskip
This proposition outlines the fundamental properties of the concrete functor $\mathscr{T}$, which maps an involutive 
generalized dynamic algebra $\mathfrak K$ to its submonoid of testable elements $\mathscr{T}(\mathfrak K)$. It establishes structural relationships, an isomorphism with the test set part of the Foulis quantale, and how the functor acts on morphisms.
\medskip

\begin{proposition}[Basic properties of the concrete functor $\mathscr{T}$]
\label{prop:T-axioms-hold}
Let $\mathscr{T}:\mathbb{IDA}\to\mathbb{IM}$ be given by
Definition~\textup{\ref{def:functor-T}}. Then:

\begin{enumerate}
  \item\label{item:T-basic-1}
  For every $\mathfrak{K}\in\mathbb{IDA}$ we have
  \[
    \widetilde{{K}}\ \subseteq\ \mathscr{T}({K})\ \subseteq\ K,
  \]
  and $\mathscr{T}(\mathfrak{K})$ is an involutive submonoid of
  $(K,\odot,{}^*,e)$.

  \item\label{item:T-basic-2}
  Let $\mathfrak K\in\mathbb{IDA}$ be a semi-Foulis dynamic algebra such that,  for any $s,t\in\mathscr{T}(K)$,
$s=t$ iff $s\equiv t$.

  
  Then the
  canonical left action of $\mathscr{T}({K})$ on the set of tests 
  $\widetilde{{K}}$ induces an 
  isomorphism of
  involutive monoids
  \[
    \nu_{\mathfrak{K}}:\mathscr{T}(\mathfrak{K})\to 
       \mathscr{T}(\mathbf{Lin}(\widetilde{\mathfrak{K}}))=
    L_{\widetilde{\mathfrak{K}}}^{\mathrm{can}},
    \qquad
    \nu_{\mathfrak{K}}(k)\ :=\ k\bullet(-),
  \]
  whose restriction
  \[
    \nu_{\mathfrak{K}}|_{\widetilde{\mathfrak{K}}}:
    \widetilde{\mathfrak{K}}\longrightarrow
    \widetilde{\mathbf{Lin}(\widetilde{\mathfrak{K}})}
  \]
  is an order-preserving bijection between the distinguished orthomodular lattices.

  \item\label{item:T-basic-3}
  For every complete orthomodular lattice $\mathcal{M}$, the involutive
  monoids $\mathscr{T}(\mathbf{Lin}(\mathcal{M}))$ and
  $\mathscr{T}\bigl(\operatorname{\mathscr{P}}(\mathscr{T}(\mathbf{Lin}
  (\mathcal{M})))\bigr)$ are isomorphic in $\mathbb{IM}$ via the
  canonical map
  \[
    \mu_{\mathcal{M}}:\mathscr{T}(\mathbf{Lin}(\mathcal{M}))
       \xrightarrow{\ \cong\ }
       \mathscr{T}\bigl(\mathscr{P}(\mathscr{T}(\mathbf{Lin}
       (\mathcal{M})))\bigr),
    \qquad
    \mu_{\mathcal{M}}(f)\ :=\ \{f\}.
  \]

  \item\label{item:T-basic-4}
  For every morphism $f:\mathfrak{K}_1\to\mathfrak{K}_2$ in
  $\mathbb{IDA}$ we have
  \[
    \mathscr{T}(f)
      \ =\
      f|_{\mathscr{T}(\mathfrak{K}_1)}:
      \mathscr{T}(\mathfrak{K}_1)\longrightarrow\mathscr{T}(\mathfrak{K}_2),
  \]
  i.e.\ $\mathscr{T}(f)$ is the restriction of $f$ to
  $\mathscr{T}(\mathfrak{K}_1)$.
\end{enumerate}
\end{proposition}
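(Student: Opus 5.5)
Items 1 and 4 are essentially definitional, so I will dispose of them first. For item 1, $\mathscr{T}(K)$ is an intersection of involutive submonoids of $(K,\odot,{}^*,e)$ containing $\widetilde K$. The family is nonempty because $K$ itself belongs to it, and an intersection of involutive submonoids is again one. Hence $\widetilde K\subseteq\mathscr{T}(K)\subseteq K$.

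For item 4, let $f$ be a morphism in $\mathbb{IDA}$. Since $f({\sim}_1 x)={\sim}_2 f(x)$, we get $f(\widetilde K_1)\subseteq\widetilde K_2$. The preimage $f^{-1}(\mathscr{T}(K_2))$ is an involutive submonoid of $K_1$, because $f$ preserves $\odot$, ${}^*$ and $e$, and it contains $\widetilde K_1$. By minimality it therefore contains $\mathscr{T}(K_1)$. Thus the restriction is well defined, and it is $\mathscr{T}(f)$ by Definition~\ref{def:functor-T}.

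For item 2, I first identify the target. By Remark~\ref{rem:sasaki-proj-terminology}, the test set of $\mathbf{Lin}(\widetilde{\mathfrak K})$ consists of the Sasaki projections $\pi_u$, $u\in\widetilde K$. So $\mathscr{T}(\mathbf{Lin}(\widetilde{\mathfrak K}))$ is the involutive submonoid generated by them, which is $\mathbf{Si}(\widetilde K)=L^{\mathrm{can}}_{\widetilde K}$ by Remark~\ref{rem:canonical-Si}.

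By Theorem~\ref{thm:K-module}, $k\mapsto k\bullet(-)$ is a monoid homomorphism from $K$ into join-preserving endomaps of $\widetilde K$: axioms (A3) and (A4) give multiplicativity and unitality. By Lemma~\ref{Sasboth}, it sends each $u\in\widetilde K$ to $\pi_u$. The next step is to show that $k\bullet(-)$ is linear with adjoint $k^*\bullet(-)$, i.e.
\[
k\bullet v\preceq w^{\perp}\iff v\preceq (k^*\bullet w)^{\perp}.
\]
Since $\mathscr{T}(K)$ is generated by $\widetilde K$ under $\odot$ and ${}^*$, I plan to avoid a general argument: it suffices to know that the set of $k$ for which $k\bullet(-)\in\mathbf{Lin}(\widetilde K)$ with adjoint $k^*\bullet(-)$ is closed under products and involution and contains $\widetilde K$. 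For generators this holds because $u^*=u$ by (IDA4) and $\pi_u$ is self-adjoint. Closure under products uses $(fg)^*=g^*f^*$ in $\mathbf{Lin}$ together with $(k\odot l)^*=l^*\odot k^*$. Closure under involution follows because the condition is symmetric in $k$ and $k^*$.

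Hence $\nu_{\mathfrak K}$ is an involutive monoid homomorphism $\mathscr{T}(K)\to\mathbf{Lin}(\widetilde K)$. Its image is an involutive submonoid generated by the $\pi_u$, hence equals $\mathbf{Si}(\widetilde K)$, which gives surjectivity. Injectivity is exactly the hypothesis: $\nu(s)=\nu(t)$ means $s\equiv t$, hence $s=t$. On $\widetilde K$, the map $\nu$ restricts to $u\mapsto\pi_u$, and $u\preceq v$ forces $\pi_u\preceq\pi_v$ in the test order of $\mathbf{Lin}$, since both orders are read off through $\pi_u(1)=u$. This gives the stated order-preserving bijection.

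Item 3 is where I expect the main obstacle. Put $L=\mathscr{T}(\mathbf{Lin}(\mathcal M))$, which by the argument above equals $\mathbf{Si}(M)$ and contains all Sasaki projections. Then $\mathscr{P}(L)$ is an involutive generalized dynamic algebra by Lemma~\ref{PMisIDA}, and by Proposition~\ref{prop:chi-iso-finitary} its test set is $\{\{\pi_m\}\mid m\in M\}$. The map $f\mapsto\{f\}$ is an injective involutive monoid homomorphism, since $\{f\}\odot\{g\}=\{f\circ g\}$ and $\{f\}^*=\{f^*\}$, and its image contains all the tests $\{\pi_m\}$. Its image is therefore an involutive submonoid containing $\widetilde{\mathscr{P}(L)}$, so it contains $\mathscr{T}(\mathscr{P}(L))$.

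The delicate point is the reverse inclusion. Every $f\in\mathbf{Si}(M)$ is a finite composite $\pi_{m_1}\circ\cdots\circ\pi_{m_j}$, so $\{f\}=\{\pi_{m_1}\}\odot\cdots\odot\{\pi_{m_j}\}$ lies in the generated submonoid. For $\mathrm{id}_M$ I will use $\{\mathrm{id}_M\}=\{\pi_1\}$, as $\pi_1=\mathrm{id}_M$. Hence the image equals $\mathscr{T}(\mathscr{P}(L))$ and $\mu_{\mathcal M}$ is an isomorphism. I will take care to use the explicit description $\mathbf{Si}(M)$ here, rather than arbitrary generated elements, because singletons are not closed under unions.
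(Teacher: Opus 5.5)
Your proposal is correct and follows essentially the same route as the paper. Items 1 and 4 go by minimality of the generated involutive submonoid. Item 2 uses the action homomorphism $k\mapsto k\bullet(-)$, which sends tests to self-adjoint Sasaki projections; your closure-under-generators argument is the paper's finite-word computation $\nu_{\mathfrak K}(u_1\odot\cdots\odot u_n)=\pi_{u_1}\circ\cdots\circ\pi_{u_n}$ in another guise, and you should also put $e$ in your closure set, which is immediate from (A4). Item 3 goes via singletons, and your explicit check that $\{f\}\in\mathscr{T}(\mathscr{P}(L))$ for every $f\in\mathbf{Si}(M)$, including $\{\mathrm{id}_M\}=\{\pi_1\}$, settles a well-definedness point for $\mu_{\mathcal M}$ that the paper leaves implicit.
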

\begin{proof}
We check the items one by one.

\smallskip
\noindent\textup{({\ref{item:T-basic-1}}):}
Fix $\mathfrak{K}=(K,\bigsqcup,\odot,{}^*,{\sim},e)\in\mathbb{IDA}$ and put
\[
\mathcal{F}_{\mathfrak{K}}
\ :=\
\Bigl\{\,M\subseteq K \ \Bigm|\ 
M\text{ is an involutive submonoid of }(K,\odot,{}^*,e)
\text{ and }\widetilde{\mathfrak K}\subseteq M\,\Bigr\}.
\]
By Definition~\ref{def:functor-T} we have
\[
\mathscr{T}(\mathfrak{K})
\ =\
\bigcap_{M\in\mathcal{F}_{\mathfrak{K}}} M.
\]
Evidently, $\mathscr{T}(\mathfrak{K})\subseteq K$.
The family $\mathcal{F}_{\mathfrak{K}}$ is nonempty since $K\in\mathcal{F}_{\mathfrak{K}}$,
hence the intersection is well defined.
Intersections of involutive submonoids are involutive submonoids, so
$\mathscr{T}(\mathfrak{K})$ is an involutive submonoid of $(K,\odot,{}^*,e)$
containing $\widetilde{\mathfrak K}$, which proves \textup{({\ref{item:T-basic-1}})}.

\medskip

\noindent\textup{({\ref{item:T-basic-2}}):}
Assume that $\mathfrak K$ is a semi-Foulis dynamic algebra satisfying  the separating condition
\[
(\forall s,t\in \mathscr{T}(\mathfrak K))\qquad
s=t \iff s\equiv t,
\]
where $s\equiv t$ means $(\forall w\in\widetilde{\mathfrak K})\ (s\bullet w=t\bullet w)$.

\smallskip
\noindent\textbf{Step 1: the action homomorphism.}
Recall the canonical left action on tests
\[
\bullet:{\mathfrak K}\times\widetilde{\mathfrak K}\to\widetilde{\mathfrak K},
\qquad
k\bullet v:={\sim}{\sim}(k\odot v),
\]
and define
\[
\Phi:K\to \operatorname{Set}(\widetilde{\mathfrak K},\widetilde{\mathfrak K}),
\qquad
\Phi(k):=k\bullet(-).
\]
Here $\operatorname{Set}(\widetilde{\mathfrak K},\widetilde{\mathfrak K})$ is the set of all endomaps on the set ${\widetilde{K}}$. 
By the left action laws (Theorem~\ref{thm:K-module}), for all $k,\ell\in K$ and $v\in\widetilde{K}$,
\[
\Phi(k\odot\ell)(v)
=(k\odot\ell)\bullet v
= k\bullet(\ell\bullet v)
=\Phi(k)\bigl(\Phi(\ell)(v)\bigr),
\]
and $\Phi(e)=\operatorname{id}_{\widetilde{\mathfrak K}}$.
Hence
\[
\Phi(k\odot\ell)=\Phi(k)\circ\Phi(\ell),
\qquad
\Phi(e)=\operatorname{id}_{\widetilde{\mathfrak K}},
\]
so $\Phi$ is a monoid homomorphism.

\smallskip
\noindent\textbf{Step 2: tests in $\mathbf{Lin}(\widetilde{\mathfrak K})$ and the image of $\mathscr{T}(\mathfrak K)$.}
For each $u\in\widetilde{K}$ define the (ortholattice) Sasaki projection
\[
\pi_u:\widetilde{\mathfrak K}\to\widetilde{\mathfrak K},
\qquad
\pi_u(v):=u\wedge(u^\perp\vee v).
\]
Since $\widetilde{\mathfrak K}$ is a $\mathfrak K$-module by Theorem \ref{thm:K-module},  we  have, for every $u\in\widetilde{K}$, that $\Phi(u)=u\bullet(-)
$ preserves arbitrary joins in $\widetilde{\mathfrak K}$. 
Moreover, from Lemma \ref{Sasboth}, we have that 
$\Phi(u)=u\bullet(-)=\pi_u$. 

Therefore
\[
\Phi(\widetilde{\mathfrak K})
=\{\pi_u\mid u\in\widetilde{\mathfrak K}\}
=\widetilde{\mathbf{Lin}(\widetilde{\mathfrak K})},
\]
where the last equality holds by definition of the distinguished test set
$\widetilde{\mathbf{Lin}(M)}:=\{\pi_m\mid m\in M\}$ and Remark \ref{rem:two-adjoints-sasaki}.

Now put
\[
M:=\Phi(\mathscr{T}(\mathfrak K)).
\]
We claim that $M\subseteq \mathscr{T}\bigl(\mathbf{Lin}(\widetilde{\mathfrak K})\bigr)$ and in fact
\[
M=\mathscr{T}\bigl(\mathbf{Lin}(\widetilde{\mathfrak K})\bigr).
\]

First, every $u\in\widetilde{\mathfrak K}$ is self-adjoint in ${\mathfrak K}$:
indeed $u={\sim} x$ for some $x$, hence $u^*=({\sim} x)^*={\sim} x=u$ by \textup{(\textbf{IDA4})}.
Consequently, $\mathscr{T}(\mathfrak K)$ is simply the (involutive) submonoid generated by
$\widetilde{\mathfrak K}$, i.e.,\ every $k\in\mathscr{T}(\mathfrak K)$ is a finite product
$u_1\odot\cdots\odot u_n$ with $u_i\in\widetilde{ K}$.

Since $\Phi$ is a monoid homomorphism and $\Phi(u_i)=\pi_{u_i}$, we get
\[
\Phi(k)=\pi_{u_1}\circ\cdots\circ\pi_{u_n}.
\]
Because $\mathbf{Lin}(\widetilde{\mathfrak K})$ is closed under composition and contains all
$\pi_u$ ($u\in\widetilde{K}$), it follows that
\[
\Phi(k)\in \mathbf{Lin}(\widetilde{\mathfrak K}),
\qquad
\text{and hence } M\subseteq \mathbf{Lin}(\widetilde{\mathfrak K}).
\]

Moreover, $M$ is an involutive submonoid of $\mathbf{Lin}(\widetilde{\mathfrak K})$ containing
$\widetilde{\mathbf{Lin}(\widetilde{\mathfrak K})}=\Phi(\widetilde{\mathfrak K})$,
so by minimality of $\mathscr{T}\bigl(\mathbf{Lin}(\widetilde{\mathfrak K})\bigr)$ we have
\[
\mathscr{T}\bigl(\mathbf{Lin}(\widetilde{\mathfrak K})\bigr)\subseteq M.
\]
Conversely, every element of $M$ is a finite composition of elements of
$\widetilde{\mathbf{Lin}(\widetilde{\mathfrak K})}$, hence belongs to the involutive submonoid
generated by $\widetilde{\mathbf{Lin}(\widetilde{\mathfrak K})}$, i.e.,
\[
M\subseteq \mathscr{T}\bigl(\mathbf{Lin}(\widetilde{\mathfrak K})\bigr).
\]
Thus $M=\mathscr{T}\bigl(\mathbf{Lin}(\widetilde{\mathfrak K})\bigr)$.

Therefore the restriction
\[
\nu_{{\mathfrak K}}:=\Phi|_{\mathscr{T}(\mathfrak K)}
:\mathscr{T}(\mathfrak K)\to \mathscr{T}\bigl(\operatorname{Lin}(\widetilde{\mathfrak K})\bigr),
\qquad
\nu_{\mathfrak K}(k):=k\bullet(-),
\]
is well-defined and \emph{surjective}.

\smallskip
\noindent\textbf{Step 3: $\nu_{\mathfrak K}$ preserves involution.}
Let $k\in\mathscr{T}(\mathfrak K)$ and write $k=u_1\odot\cdots\odot u_n$ with
$u_i\in\widetilde{K}$. Then $k^*=u_n\odot\cdots\odot u_1$, and hence
\[
\nu_{{\mathfrak K}}(k^*)=\Phi(k^*)=\pi_{u_n}\circ\cdots\circ\pi_{u_1}.
\]
Moreover, each $\pi_u$ is self-adjoint in $\operatorname{Lin}(\widetilde{\mathfrak K})$,
i.e.\ $\pi_u^*=\pi_u$, and the involution on $\operatorname{Lin}(\widetilde{\mathfrak K})$
satisfies $(f\circ g)^*=g^*\circ f^*$. Therefore
\[
\nu_{{\mathfrak K}}(k)^*
=(\pi_{u_1}\circ\cdots\circ\pi_{u_n})^*
=\pi_{u_n}\circ\cdots\circ\pi_{u_1}
=\nu_{{\mathfrak K}}(k^*).
\]
Thus $\nu_{\mathfrak K}$ is a morphism in $\mathbb{IM}$.

\smallskip
\noindent\textbf{Step 4: injectivity of $\nu_{\mathfrak K}$.}
Let $s,t\in\mathscr{T}(\mathfrak K)$ and assume $\nu_{\mathfrak K}(s)=\nu_{\mathfrak K}(t)$. Then
$s\bullet w=t\bullet w$ for all $w\in\widetilde{\mathfrak K}$, i.e.\ $s\equiv t$.
By the standing separation assumption we conclude $s=t$. Hence 
$\nu_{\mathfrak K}$ is injective,
and therefore an isomorphism in $\mathbb{IM}$.

\smallskip
\noindent\textbf{Step 5: restriction to tests and order preservation.}
For each $u\in\widetilde{ K}$ we have 
$\nu_{\mathfrak K}(u)=\pi_u$, so the restriction
\[
\nu_{\mathfrak K}|_{\widetilde{\mathfrak K}}
:\widetilde{\mathfrak K}\to \widetilde{\mathbf{Lin}(\widetilde{\mathfrak K})},
\qquad
u\mapsto \pi_u,
\]
is a bijection.
Let $\le$ denote the ortholattice order on $\widetilde{\mathfrak K}$, and let $\le$ also denote
the distinguished test order on $\widetilde{\mathbf{Lin}(\widetilde{\mathfrak K})}$, given by
\[
p\le q \quad\Longleftrightarrow\quad p=q\circ p.
\]
Then for $u,v\in\widetilde{K}$,
\[
u\le v
\quad\Longleftrightarrow\quad
\pi_v\circ\pi_u=\pi_u
\quad\Longleftrightarrow\quad
\pi_u\le \pi_v.
\]
Indeed, if $u\le v$ then $\pi_u(x)\le u\le v$ for all $x$, hence
$\pi_v(\pi_u(x))=\pi_u(x)$.
Conversely, if $\pi_v\circ\pi_u=\pi_u$, then evaluating at $1$ yields
\[
u=\pi_u(1)=\pi_v(\pi_u(1))=\pi_v(u)\le v.
\]
Thus $\nu|_{\widetilde{\mathfrak K}}$ is order-preserving, completing the proof of
\textup{({\ref{item:T-basic-2}})}.

\medskip

\noindent\textup{({\ref{item:T-basic-3}}):}
Let $\mathcal{M}$ be a complete orthomodular lattice.
Consider the Foulis quantale $\mathbf{Lin}(\mathcal{M})$ and the semi-Foulis dynamic algebra
\[
\mathscr{P}\bigl(\mathscr{T}(\mathbf{Lin}(\mathcal{M}))\bigr).
\]
By Definition~\ref{def:functor-T},
$\mathscr{T}\bigl(\mathscr{P}(\mathscr{T}(\mathbf{Lin}(\mathcal{M})))\bigr)$
is the involutive submonoid generated by the distinguished test set
$\widetilde{\mathscr{P}(\mathscr{T}(\mathbf{Lin}(\mathcal{M})))}$.
By Proposition~\ref{prop:chi-iso-finitary},
\[
\widetilde{\mathscr{P}(\mathscr{T}(\mathbf{Lin}(\mathcal{M})))}
=\bigl\{\{\pi_m\}\mid m\in\mathcal{M}\bigr\}.
\]
In $\mathscr{P}(\mathscr{T}(\mathbf{Lin}(\mathcal{M})))$ we have
\[
\{f\}\odot\{g\}=\{f\odot g\},
\qquad
\{f\}^*=\{f^*\},
\qquad
e=\{\operatorname{id}_{\mathcal{M}}\}.
\]
Define
\[
\mu_{\mathcal{M}}:\mathscr{T}(\mathbf{Lin}(\mathcal{M}))
\longrightarrow
\mathscr{T}\bigl(\mathscr{P}(\mathscr{T}(\mathbf{Lin}(\mathcal{M})))\bigr),
\qquad
\mu_{\mathcal{M}}(f):=\{f\}.
\]
Then $\mu_{\mathcal{M}}$ preserves unit, product, and involution, hence $\mu$ is a morphism in $\mathbb{IM}$.

Every element of $\mathscr{T}\bigl(\mathscr{P}(\mathscr{T}(\mathbf{Lin}(\mathcal{M})))\bigr)$
is a finite product of generators $\{\pi_m\}$, and such a product is always a singleton:
\[
\{\pi_{m_1}\}\odot\cdots\odot\{\pi_{m_n}\}
=\{\pi_{m_1}\odot\cdots\odot\pi_{m_n}\}.
\]
Hence every element in the codomain is of the form $\{f\}$ with
$f\in\mathscr{T}(\mathbf{Lin}(\mathcal{M}))$, so $\mu_{\mathcal{M}}$ is surjective.
Injectivity is immediate: $\mu_{\mathcal{M}}(f)=\mu_{\mathcal{M}}(g)$ implies $\{f\}=\{g\}$ and hence $f=g$.
Thus $\mu_{\mathcal{M}}$ is an isomorphism in $\mathbb{IM}$, proving \textup{({\ref{item:T-basic-3}})}.

\medskip

\noindent\textup{({\ref{item:T-basic-4}}):}
Let $f:\mathfrak{K}_1\to\mathfrak{K}_2$ be a morphism in $\mathbb{IDA}$, with underlying map
$f:K_1\to K_2$.
If $u\in\widetilde{K}_1$, then $u={\sim} x$ for some $x\in K_1$, hence
\[
f(u)=f({\sim} x)={\sim} f(x)\in\widetilde{\mathfrak K}_2,
\]
so $f(\widetilde{\mathfrak K}_1)\subseteq \widetilde{\mathfrak K}_2$.
Since $f$ preserves $\odot$, ${}^*$ and $e$, it follows that $f$ maps the involutive submonoid
generated by $\widetilde{\mathfrak K}_1$ into the involutive submonoid generated by
$\widetilde{\mathfrak K}_2$, i.e.
\[
f\bigl(\mathscr{T}(\mathfrak{K}_1)\bigr)\subseteq \mathscr{T}(\mathfrak{K}_2).
\]
Therefore
\[
\mathscr{T}(f):=f|_{\mathscr{T}(\mathfrak{K}_1)}
:\mathscr{T}(\mathfrak{K}_1)\to\mathscr{T}(\mathfrak{K}_2)
\]
is well defined, and since it is the restriction of an involutive monoid homomorphism,
it is again an involutive monoid homomorphism. This proves \textup{({\ref{item:T-basic-4}})}.
\end{proof}

\section{Construction of $\mathscr{T}$-based orthomodular dynamic algebras from complete orthomodular lattices}
\label{sec:construction-gamma}

The purpose of this section is to detail the construction of 
a functor $\Gamma$ from the category $\mathbb{COL}$ of complete orthomodular lattices to the category $\mathscr{T}\mathbb{ODA}$ of $\mathscr{T}$-based orthomodular dynamic algebras.

\subsection{Mapping of Objects} 
Let $\mathcal M=(M,\le,{}^\perp)$ be a complete orthomodular lattice. We put $L_{\mathcal M}=\mathscr T(\mathbf{Lin}(\mathcal M))$ and $\Gamma(\mathcal M)=\mathscr{P}( L_\mathcal{M})$.
\medskip

Through the following lemma, we establish the correctness of the definition of $\Gamma$ on objects.

\medskip

 \begin{lemma}\label{PMisTODA}
     Let $\mathcal M=(M,\le,{}^\perp)$ be a complete orthomodular lattice. 
     Then the structure $\bigl(\mathscr{P}( L_\mathcal{M}) ,\bigcup,\odot, ^ *, {\sim}, \{\mathrm{id}_M\})$ from the previous construction is a $\mathscr{T}$-based orthomodular dynamic algebra.
 \end{lemma}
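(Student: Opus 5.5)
We check the four axioms (\textbf{TODA1})--(\textbf{TODA4}) for $\mathscr{P}(L_{\mathcal M})$ with $L_{\mathcal M}=\mathscr T(\mathbf{Lin}(\mathcal M))$. First we make sure the construction applies, i.e.\ that $L_{\mathcal M}$ is an involutive submonoid of $\mathbf{Lin}(\mathcal M)$ containing all Sasaki projections. In the Foulis quantale $\mathbf{Lin}(\mathcal M)$ the operation ${\sim}$ is ${}^{\perp}$, and $f^{\perp}=\pi_{f(1)^{\perp}}$. Hence $\widetilde{\mathbf{Lin}(\mathcal M)}=\{\pi_m\mid m\in M\}$, using $\pi_m^{\perp}=\pi_{m^\perp}$ and $\pi_m(1)=m$. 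So $L_{\mathcal M}$ is the involutive submonoid generated by all Sasaki projections, and by Remark~\ref{rem:canonical-Si} it equals $\mathbf{Si}(M)=L^{\mathrm{can}}_M$. Consequently Lemma~\ref{PMisIDA}, Proposition~\ref{prop:chi-iso-finitary} and Theorem~\ref{theorem:chi-iso-semifoulis} apply. In particular (\textbf{TODA1}) holds, and the test set is $\widetilde{\mathscr P(L_{\mathcal M})}=\{\{\pi_m\}\mid m\in M\}$.

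The key intermediate step is an explicit description of $\mathscr T(\mathscr P(L_{\mathcal M}))$. We will show
\[
\mathscr T(\mathscr P(L_{\mathcal M}))=\bigl\{\{f\}\mid f\in L_{\mathcal M}\bigr\}.
\]
For ``$\supseteq$'': every $f\in\mathbf{Si}(M)$ is a finite composite $\pi_{m_1}\circ\cdots\circ\pi_{m_n}$, so $\{f\}=\{\pi_{m_1}\}\odot\cdots\odot\{\pi_{m_n}\}$ is a product of generators. For ``$\subseteq$'': the right-hand side contains the generators $\{\pi_m\}$ and the unit $\{\mathrm{id}_M\}=\{\pi_1\}$. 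It is also closed under $\odot$ and ${}^{*}$, since $\{f\}\odot\{g\}=\{f\circ g\}$ and $\{f\}^{*}=\{f^{*}\}$, with $L_{\mathcal M}$ closed under both. This is essentially the argument in item~(\ref{item:T-basic-3}) of Proposition~\ref{prop:T-axioms-hold}, which we reuse.

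With this description, (\textbf{TODA2}) and (\textbf{TODA3}) are immediate. For (\textbf{TODA2}), let $A\subseteq\mathscr P(L_{\mathcal M})$ contain all singletons $\{f\}$ and be closed under arbitrary unions. Every $X\subseteq L_{\mathcal M}$ equals $\bigcup_{f\in X}\{f\}$, with $\varnothing$ the empty union, so $A=\mathscr P(L_{\mathcal M})$. For (\textbf{TODA3}), let $S,T$ be sets of singletons with $\bigcup S=\bigcup T$. Each singleton $\{f\}\in S$ satisfies $f\in\bigcup S=\bigcup T$, hence $\{f\}\in T$; so $S\subseteq T$, and symmetrically $T\subseteq S$.

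For (\textbf{TODA4}), only the direction $s\equiv t\Rightarrow s=t$ needs proof. Take $\{f\}\equiv\{g\}$ with $f,g\in L_{\mathcal M}$, and test against each $w=\{\pi_m\}$. By Lemma~\ref{lemma4},
\[
{\sim}{\sim}(\{f\}\odot\{\pi_m\})=\{\pi_{f(\pi_m(1))}\}=\{\pi_{f(m)}\},
\]
and likewise for $g$. Hence $\pi_{f(m)}=\pi_{g(m)}$, and evaluating at $1$ gives $f(m)=g(m)$ for all $m\in M$, so $f=g$. We expect the main obstacle to be bookkeeping rather than depth: correctly identifying the test sets both in $\mathbf{Lin}(\mathcal M)$ (so that $L_{\mathcal M}=\mathbf{Si}(M)$ contains all $\pi_m$) and in $\mathscr P(L_{\mathcal M})$ (Proposition~\ref{prop:chi-iso-finitary}). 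Once that is settled, every axiom reduces to the singleton calculus above.
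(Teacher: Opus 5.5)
Your proof is correct and follows the paper's argument essentially step for step: you identify $\mathscr T(\mathscr P(L_{\mathcal M}))$ with the singletons $\{f\}$, $f\in L_{\mathcal M}$; you get (\textbf{TODA2}) and (\textbf{TODA3}) from closure under unions; and you get (\textbf{TODA4}) by testing against $\{\pi_m\}$ via Lemma~\ref{lemma4}. The only difference is that you derive the singleton description and $\{\pi_m\mid m\in M\}\subseteq L_{\mathcal M}$ from the concrete generator definition of $\mathscr T$ in Subsection~\ref{sec:concrete-T}, whereas the paper allows an arbitrary $\mathscr T$ satisfying (\textbf{T1})--(\textbf{T4}); there these facts come from (\textbf{T1}) combined with your computation $\widetilde{\mathbf{Lin}(\mathcal M)}=\{\pi_m\mid m\in M\}$, and from the surjectivity of $\mu_{\mathcal M}$ in (\textbf{T3}), so citing those makes your argument cover the general case unchanged.
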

  \begin{proof} It is enough to check conditions {(\textbf{TODA1})}-{(\textbf{TODA4})}. 
  
\medskip
\noindent{(\textbf{TODA1})}: It follows from Proposition 
\ref{prop:chi-iso-finitary}.
\medskip

\noindent{(\textbf{TODA2})}: Assume that a set $A$ satisfies  
$\mathscr{T} \left( \mathscr{P}( L_\mathcal{M}) \right) \subseteq A \subseteq \mathscr{P}( L_\mathcal{M})$, $A$ is closed under both the $\odot$ and ${^*}$ operations, and $A$ is closed under $\bigcup$. 
Since $\mathscr{T} \left( \mathscr{P}( L_\mathcal{M}) \right)=%
\{\{x\}\mid x\in L_\mathcal{M}\}$ and $A$ is closed under $\bigcup$ we conclude that  $A=\mathscr{P}( L_{M})$.

\medskip

\noindent{(\textbf{TODA3})}: Since the join in $\mathscr{P}(L)$ is a set-theoretic union and the elements of $\mathscr{P}(L)$, being atoms, are disjoint from one another, (\textbf{TODA3}) is automatically satisfied.

\medskip

\noindent{(\textbf{TODA4})}: Since $\equiv$ is an equivalence 
we obtain that $=\subseteq \equiv$. 

Assume now that $S, T\in %
\mathscr{T}\left(\mathscr{P}\left(\mathscr{T}(\mathbf{Lin}(\mathcal M))\right)\right)$ and $S \equiv T$. Then $S=\{s\}$, $s\in \mathscr{T}(\mathbf{Lin}(\mathcal M))$, $T=\{t\}$, $t\in \mathscr{T}(\mathbf{Lin}(\mathcal M))$ and 
${\sim}\,{\sim} (\{s\}\odot Z)={\sim}\,{\sim} (\{t\}\odot Z)$ for every $Z\in \widetilde{\mathscr{P}\left(\mathscr{T}(\mathbf{Lin}(\mathcal M))\right)}$. 


Considering each singleton $Z = \{m\}$ with $m \in M$, we can compute:
\begin{align*}
  {\sim}\,{\sim} (\{s\}\odot Z)&=   {\sim}\,{\sim} (\{s\}\odot \{\pi_m\})={\sim}\,{\sim} (\{s\circ \pi_m\})%
  \stackrel{L. \ref{lemma4}}{=}\{\pi_{s(\pi_m(1))}\}= \{\pi_{s(m)}\}.
\end{align*}
Hence, $\{\pi_{s(m)}\}=\{\pi_{t(m)}\}$ for every $m\in M$. 
We conclude that $s=t$, i.e., $S=T$. 
   \end{proof}
\medskip

 \subsection{Mapping of Arrows}
 
We formally define the action of the map $\Gamma$ on ortholattice isomorphisms between complete orthomodular lattices. Let $\mathcal{M}_1=(M_{1},\le_{1}, -^{\perp_{1}})$ and $\mathcal{M}_2=(M_{2}, \le_{2}, -^{\perp_{2}})$ be complete orthomodular lattices. The map $\Gamma$ maps these lattices to their respective $\mathscr{T}$-based orthomodular dynamic algebras %
$\Gamma (\mathcal{M}_1) =\bigl(\mathscr{P}( L_{{M}_1}) ,\bigcup_1,\odot_1, ^{*_1}, {\sim_1}, \{\mathrm{id}_{M_1}\} \bigr)$ and $\Gamma (\mathcal{M}_2) =\bigl(\mathscr{P}( L_{{M}_2}) ,\bigcup_2,\odot_2, ^{*_2}, {\sim_2}, \{\mathrm{id}_{M_2}\}\bigr)$.

\medskip

\noindent{}For an ortholattice isomorphism $k \colon \mathcal{M}_1 \to \mathcal{M}_2$, we define a map $\Gamma(k) \colon \Gamma (\mathcal{M}_1) \to \Gamma (\mathcal{M}_2)$ as follows:

$$A \mapsto \{k \circ a \circ k^{-1} \mid a \in A\}, A\in \mathscr{P}( L_{{M}_1}).$$
Clearly, $\Gamma(k)(A)\in \mathscr{P}( L_{{M}_2})$,  and the inverse map to 
$\Gamma(k)$ is $\Gamma(k^{-1})$. 

\noindent Suppose that $m \in M_1$. Then $\pi_{\,k(m)}=k\circ \pi_m\circ k^{-1}$. Namely,  for every $n \in M_1$:

\[
\begin{aligned}
{\pi _{k\left( m \right)}}\left( n \right) &= k\left( m \right) \wedge \left( {{{\left( {k\left( m \right)} \right)}^ \bot } \vee n} \right)%
= k\left( {m \wedge \left( {{m^ \bot } \vee {k^{ - 1}}\left( n \right)} \right)} \right)\\
&= k\left( {{\pi _m}\left( {{k^{ - 1}}\left( n \right)} \right)} \right)%
=\left( {k \circ {\pi _m} \circ {k^{ - 1}}} \right)\left( n \right).
\end{aligned}
\]

\medskip

The following lemma verifies the definition of $\Gamma$ on morphisms.

\medskip

\begin{lemma}\label{gammamor}
Let $\mathcal M_{1}=(M_{1},\le_{1}, -^{\perp_{1}})$ and $\mathcal M_{2}=(M_{2}, \le_{2}, -^{\perp_{2}})$ be complete orthomodular lattices. If $k: \mathcal M_{1} \to \mathcal M_{2}$ is an ortholattice isomorphism then the map $\Gamma(k)$ is a $\mathscr{T}\mathbb{ODA}$-morphism.
\end{lemma}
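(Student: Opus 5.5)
The plan is to check directly that $\Gamma(k)$ is a bijective morphism of involutive generalized dynamic algebras. First, $\Gamma(k)$ must map $\mathscr P(L_{M_1})$ into $\mathscr P(L_{M_2})$. Conjugation $c_k(a):=k\circ a\circ k^{-1}$ is an involutive monoid isomorphism $\mathbf{Lin}(\mathcal M_1)\to\mathbf{Lin}(\mathcal M_2)$. It is multiplicative and unital by inspection. It lands in $\mathbf{Lin}$ and preserves ${}^*$ because $k$ is an ortholattice isomorphism with adjoint $k^{-1}$: $k\circ a\circ k^{-1}(x)\perp y$ iff $a(k^{-1}x)\perp k^{-1}y$ iff $k^{-1}x\perp a^*(k^{-1}y)$ iff $x\perp k\circ a^*\circ k^{-1}(y)$, so $(c_k(a))^*=c_k(a^*)$. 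Since $c_k(\pi_m)=\pi_{k(m)}$ (computed just before the lemma) and $k$ is onto, $c_k$ maps Sasaki projections onto Sasaki projections. Hence it maps $L_{M_1}=\mathscr T(\mathbf{Lin}(\mathcal M_1))=\mathbf{Si}(M_1)$ (Remark~\ref{rem:canonical-Si} together with the concrete $\mathscr T$) onto $\mathbf{Si}(M_2)=L_{M_2}$, because every element is a finite composite $\pi_{m_1}\circ\cdots\circ\pi_{m_j}$ and $c_k$ sends it to $\pi_{k(m_1)}\circ\cdots\circ\pi_{k(m_j)}$. The same argument with $k^{-1}$ shows that $\Gamma(k^{-1})$ is inverse to $\Gamma(k)$, so $\Gamma(k)$ is bijective.

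Next I verify the unital involutive quantale homomorphism properties. Since $\Gamma(k)(A)=c_k[A]$ is a direct image under a map, it preserves arbitrary unions, including $\emptyset$. For products,
$\Gamma(k)(A\odot B)=\{c_k(a\circ b)\}=\{c_k(a)\circ c_k(b)\}=\Gamma(k)(A)\odot\Gamma(k)(B)$.
For the involution, $\Gamma(k)(A^*)=\{c_k(a^*)\}=\{c_k(a)^*\}=\Gamma(k)(A)^*$, and for the unit, $\Gamma(k)(\{\mathrm{id}_{M_1}\})=\{\mathrm{id}_{M_2}\}$.

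The main step is preservation of ${\sim}$. Here
${\sim_2}\Gamma(k)(A)=\bigl\{\pi_{(\bigvee_{a\in A}k(a(k^{-1}(1))))^{\perp_2}}\bigr\}$.
Since $k$ is an ortholattice isomorphism, $k^{-1}(1)=1$. Because $k$ is an order isomorphism of complete lattices, it preserves arbitrary joins (not only the binary ones noted in the remark), and it commutes with ${}^\perp$. So the index equals $k\bigl((\bigvee_{a\in A}a(1))^{\perp_1}\bigr)$, and
${\sim_2}\Gamma(k)(A)=\{\pi_{k(m)}\}=\{c_k(\pi_m)\}=\Gamma(k)({\sim_1}A)$ with $m=(\bigvee_{a\in A}a(1))^{\perp_1}$.

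I expect the only delicate point to be showing that $c_k$ really maps $L_{M_1}$ onto $L_{M_2}$, that is, identifying $\mathscr T(\mathbf{Lin}(\mathcal M))$ with the submonoid generated by Sasaki projections. This follows from Proposition~\ref{prop:T-axioms-hold}, since the tests of $\mathbf{Lin}(\mathcal M)$ are exactly the $\pi_m$ and they are self-adjoint. The other delicate point is the use of arbitrary joins in the ${\sim}$ computation, which I justify by the order-isomorphism property.
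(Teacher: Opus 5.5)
Your proposal is correct and follows essentially the same route as the paper: a direct check that $\Gamma(k)$ preserves unions, $\odot$, ${}^{*}$, the unit and ${\sim}$, the last using $k^{-1}(1)=1$, the fact that $k$ preserves arbitrary joins and ${}^{\perp}$, and $\pi_{k(m)}=k\circ\pi_m\circ k^{-1}$. You also prove that $\Gamma(k)$ maps $\mathscr P(L_{M_1})$ bijectively onto $\mathscr P(L_{M_2})$, which the paper only asserts as clear, but your argument uses the concrete $\mathscr T$ (so that $L_M=\mathbf{Si}(M)$); for an abstract $\mathscr T$ satisfying (\textbf{T1})--(\textbf{T4}) the same conclusion follows from (\textbf{T4}), because $a\mapsto k\circ a\circ k^{-1}$ is an $\mathbb{IDA}$-isomorphism $\mathbf{Lin}(\mathcal M_1)\to\mathbf{Lin}(\mathcal M_2)$ whose inverse is the corresponding map for $k^{-1}$.
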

\begin{proof} Clearly, $\Gamma(k)$ is a bijective map preserving arbitrary joins that are unions. Assume that $A, B\in  \mathscr{P}( L_{{M}_1})$. We compute:
\begin{align*}
    \Gamma(k)(A)\odot_2 \Gamma(k)(B)&=%
    \{k \circ a \circ k^{-1} \mid a \in A\} \odot_2 \{k \circ b \circ k^{-1} \mid b \in B\}\\
    &=\{k \circ a \circ k^{-1} \circ k \circ b \circ k^{-1} \mid a \in A, b \in B\} \\
    &=\{k \circ a \circ b \circ k^{-1} \mid a \in A, b \in B\} \\
    &=\{k \circ c \circ k^{-1} \mid c \in A\odot_1 B\}=%
    \Gamma(k)(A\odot_1 B),
\end{align*}
Since the composition of linear maps is linear, the adjoint of $k$ exists and $k^{*}=k^{-1}$ we can compute:
\begin{align*}
     \Gamma(k)(A)^{*_2}&=\{k \circ a \circ k^{-1} \mid a \in A\}^{*_2}%
     =\{\left(k \circ a \circ k^{-1}\right)^{*_2} \mid a \in A\}\\
     &=\{(k^{-1})^{*} \circ a^{*_1} \circ k^{*} \mid a \in A\}=%
     \{k \circ a^{*_1} \circ k^{-1} \mid a^{*_1} \in A^{*_1}\}=\Gamma(k)(A^{*_1}).\\
\end{align*}
Since $k$ preserves arbitrary joins and orthocomplement, and $k^{-1}(1)=1$ we obtain:
\begin{align*}
     {\sim_2}\Gamma(k)(A)&={\sim_2}\{k \circ a \circ k^{-1} \mid a \in A\}=\{\pi_{\left(\bigvee_{a\in A} \left(k \circ a \circ k^{-1}\right)(1)\right)^{\perp_2}}\}\\
     &=\{\pi_{\left(k\left(\bigvee_{a\in A}  a(1)\right)\right)^{\perp_2}}\}%
     =\{\pi_{k\left(\left(\bigvee_{a\in A}  a(1)\right)^{\perp_1}\right)}\}\\%
      &=\{k\circ \pi_{\left(\bigvee_{a\in A}  a(1)\right)^{\perp_1}} \circ k^{-1}\}=\Gamma(k)({\sim_1} A).
\end{align*} 

We compute
\begin{align*}
    \Gamma(k)(\{\mathrm{id}_{M_1}\})&=\{k\circ \mathrm{id}_{M_1}\circ k^{-1}\}=\{\mathrm{id}_{M_2}\}. 
\end{align*}
We conclude that $\Gamma(k)$ is a $\mathscr{T}\mathbb{ODA}$-morphism.
\end{proof}

\medskip
The following lemma demonstrates that the functor $\Gamma$ preserves the composition of morphisms.

\medskip

\begin{lemma}\label{comp}
Let $\mathcal{M}_1$, $\mathcal{M}_2$, and $\mathcal{M}_3$ be complete orthomodular lattices. Let $k \colon \mathcal{M}_1 \to \mathcal{M}_2$ and $l \colon \mathcal{M}_2 \to \mathcal{M}_3$ be $\mathbb{COL}$-morphisms. Then, the functor $\Gamma$ preserves the composition of morphisms, i.e., $\Gamma(l \circ k) = \Gamma(l) \circ \Gamma(k)$.
\end{lemma}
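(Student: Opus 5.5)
The plan is to compare the two maps pointwise on an arbitrary element $A\in\mathscr{P}(L_{M_1})$, using only the defining formula $\Gamma(k)(A)=\{k\circ a\circ k^{-1}\mid a\in A\}$ and associativity of composition of functions. First I will note that $l\circ k\colon \mathcal M_1\to\mathcal M_3$ is again an ortholattice isomorphism: it is a bijection, it reflects and preserves order, and it commutes with orthocomplements. Hence $l\circ k$ is a $\mathbb{COL}$-morphism, $\Gamma(l\circ k)$ is defined, and by Lemma~\ref{gammamor} it is a $\mathscr{T}\mathbb{ODA}$-morphism $\Gamma(\mathcal M_1)\to\Gamma(\mathcal M_3)$. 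I will also record that $(l\circ k)^{-1}=k^{-1}\circ l^{-1}$.

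Next I will compute directly:
\begin{align*}
\bigl(\Gamma(l)\circ\Gamma(k)\bigr)(A)
&=\Gamma(l)\bigl(\{k\circ a\circ k^{-1}\mid a\in A\}\bigr)
=\{l\circ(k\circ a\circ k^{-1})\circ l^{-1}\mid a\in A\}\\
&=\{(l\circ k)\circ a\circ (k^{-1}\circ l^{-1})\mid a\in A\}
=\{(l\circ k)\circ a\circ (l\circ k)^{-1}\mid a\in A\}
=\Gamma(l\circ k)(A).
\end{align*}
The middle step is plain associativity of composition of maps between the underlying sets.

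Since $A$ was arbitrary, the two maps agree, which gives $\Gamma(l\circ k)=\Gamma(l)\circ\Gamma(k)$. For completeness I will also observe that $\Gamma(\mathrm{id}_{M})(A)=\{\mathrm{id}_M\circ a\circ\mathrm{id}_M\mid a\in A\}=A$, so $\Gamma$ also preserves identities. This is not part of the statement, but it is needed for $\Gamma$ to be a functor.

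There is no real obstacle here. The only point needing care is the well-definedness remark: the set $\{(l\circ k)\circ a\circ(l\circ k)^{-1}\mid a\in A\}$ must lie in $\mathscr{P}(L_{M_3})$. This is already guaranteed because $\Gamma(l\circ k)$ is defined as a map into $\Gamma(\mathcal M_3)$, as recorded before Lemma~\ref{gammamor}, and the computation above shows it coincides with $\Gamma(l)\circ\Gamma(k)$, whose values lie in $\mathscr{P}(L_{M_3})$ by construction.
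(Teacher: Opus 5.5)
Your proposal is correct and takes essentially the same approach as the paper. Both compare the two maps pointwise on an arbitrary $A\in\mathscr{P}(L_{M_1})$, using associativity of composition and $(l\circ k)^{-1}=k^{-1}\circ l^{-1}$; your extra remarks on $l\circ k$ being a $\mathbb{COL}$-morphism and on preservation of identities (which the paper proves in a separate lemma) are harmless additions.
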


\begin{proof}
Let $k: \mathcal{M}_1 \to \mathcal{M}_2$ and $l: \mathcal{M}_2 \to \mathcal{M}_3$ be arbitrary $\mathbb{COL}$-morphisms. We aim to demonstrate that $\Gamma(l \circ k) = \Gamma(l) \circ \Gamma(k)$.

For any $A_1 \in \mathscr{P}(L_{M_1})$, applying the definition of $\Gamma$ to the composite morphism $l \circ k$ yields:
\[ \Gamma(l \circ k)(A_1) = \{(l \circ k) \circ a \circ (l \circ k)^{-1} \mid a \in A_1\}. \]

Next, we evaluate the composition $\Gamma(l) \circ \Gamma(k)$ by applying $\Gamma(l)$ to the result of $\Gamma(k)(A_1)$:
\begin{align*} (\Gamma(l) \circ \Gamma(k))(A_1) &= \Gamma(l)(\Gamma(k)(A_1)) \\ &= \Gamma(l)(\{k \circ a \circ k^{-1} \mid a \in A_1\}) \\ &= \{l \circ (k \circ a \circ k^{-1}) \circ l^{-1} \mid a \in A_1\} \end{align*}

By the associativity of function composition and the property of inverse functions, $(f \circ g)^{-1} = g^{-1} \circ f^{-1}$, we can rewrite the expression as:
\[ l \circ (k \circ a \circ k^{-1}) \circ l^{-1} = (l \circ k) \circ a \circ (k^{-1} \circ l^{-1}) = (l \circ k) \circ a \circ (l \circ k)^{-1}. \]

Substituting this back into the expression for $(\Gamma(l) \circ \Gamma(k))(A_1)$, we obtain:
\[ (\Gamma(l) \circ \Gamma(k))(A_1) = \{(l \circ k) \circ a \circ (l \circ k)^{-1} \mid a \in A_1\} \]
Comparing this result with the definition of $\Gamma(l \circ k)(A_1)$, we conclude that:
\[ \Gamma(l \circ k)(A_1) = (\Gamma(l) \circ \Gamma(k))(A_1) \]

Since this equality holds for all $A_1 \in \mathscr{P}(L_{M_1})$, we conclude that $\Gamma(l \circ k) = \Gamma(l) \circ \Gamma(k)$.
\end{proof}

We now show that $\Gamma$ preserves identity morphisms. Let $\mathrm{id}_{\mathcal{M}}$ denote the identity morphism on $\mathcal{M}$ in the category $\mathbb{COL}$ of complete orthomodular lattices, and $\mathrm{id}_{\mathcal{K}}$ the identity morphism on $\mathcal{K}$ in the category of 
$\mathscr{T}$-based orthomodular dynamic algebras.

 \medskip

\begin{lemma}\label{id}
Let $\mathcal{M}$ be a complete orthomodular lattice. 
The map $\Gamma$ preserves the identity morphism on $\mathcal{M}$, that is, $\Gamma(\text{\rm id}_{\mathcal{M}}) = \text{\rm id}_{\Gamma(\mathcal{M})}$.
\end{lemma}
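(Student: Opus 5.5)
The plan is to unfold the definition of $\Gamma$ on arrows and compare the two maps pointwise on $\mathscr{P}(L_M)$, where $L_M=\mathscr{T}(\mathbf{Lin}(\mathcal M))$. The identity morphism $\mathrm{id}_{\mathcal M}$ in $\mathbb{COL}$ is the identity function $\mathrm{id}_M\colon M\to M$. This is an ortholattice isomorphism whose inverse is again $\mathrm{id}_M$. Hence, by the definition of $\Gamma$ on arrows, for every $A\in\mathscr{P}(L_M)$ we have
\[
\Gamma(\mathrm{id}_{\mathcal M})(A)=\{\mathrm{id}_M\circ a\circ \mathrm{id}_M^{-1}\mid a\in A\}.
\]

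The key step is the observation that $\mathrm{id}_M\circ a\circ\mathrm{id}_M=a$ for every $a\in L_M$, since composition of functions is unital. The displayed set is therefore $\{a\mid a\in A\}=A$. Since $A$ was arbitrary, $\Gamma(\mathrm{id}_{\mathcal M})$ coincides as a function with the identity map on $\mathscr{P}(L_M)$.

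Finally, we note that the identity morphism $\mathrm{id}_{\Gamma(\mathcal M)}$ in $\mathscr{T}\mathbb{ODA}$ is precisely the identity function on the carrier $\mathscr{P}(L_M)$. This is because $\mathscr{T}\mathbb{ODA}$ is a (concrete) subcategory of $\mathbb{IDA}$ with set-theoretic composition and identities. Equality of underlying maps thus gives $\Gamma(\mathrm{id}_{\mathcal M})=\mathrm{id}_{\Gamma(\mathcal M)}$. Lemma~\ref{gammamor} already guarantees that $\Gamma(\mathrm{id}_{\mathcal M})$ is a $\mathscr{T}\mathbb{ODA}$-morphism, so no further verification is required. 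There is no real obstacle here; the only point needing care is to make explicit that identities in $\mathscr{T}\mathbb{ODA}$ are the underlying identity functions.
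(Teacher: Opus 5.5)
Your proposal is correct and follows essentially the same argument as the paper: unfold $\Gamma$ on arrows, use $\mathrm{id}_M\circ a\circ \mathrm{id}_M^{-1}=a$, and conclude $\Gamma(\mathrm{id}_{\mathcal M})(A)=A$ for every $A\in\mathscr{P}(L_M)$. Your extra remark that identities in $\mathscr{T}\mathbb{ODA}$ are the underlying identity functions only makes explicit a point the paper leaves implicit.
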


\begin{proof}
Let $\mathcal{M} = (M, \le, -^{\perp})$ be a complete orthomodular lattice. Our objective is to demonstrate that $\Gamma(\text{id}_{\mathcal{M}}) = \text{id}_{\Gamma(\mathcal{M})}$.

By the definition of $\Gamma$, for any $A \in \mathscr{P}(L_M)$:
\[ \Gamma(\mathrm{id}_{\mathcal{M}})(A) = \{\mathrm{id}_{\mathcal{M}} \circ a \circ \mathrm{id}_{\mathcal{M}}^{-1} \mid a \in A\}. \]

Since $\mathrm{id}_{\mathcal{M}}$ is the identity morphism, for any $a \in A$, we have $\mathrm{id}_{\mathcal{M}} \circ a = a$ and $a \circ \mathrm{id}_{\mathcal{M}}^{-1} = a$. Thus, the expression simplifies to:
\[ \Gamma(\mathrm{id}_{\mathcal{M}})(A) = \{a \mid a \in A\} = A. \]

Since this equality holds for every $A \in \mathscr{P}( L_{{M}})$, it follows directly from the definition of the identity morphism on $\Gamma(\mathcal{M})$ that $\Gamma(\text{id}_{\mathcal{M}})$ acts as the identity on $\Gamma(\mathcal{M})$.

Therefore, $\Gamma(\mathrm{id}_{\mathcal{M}}) = \mathrm{id}_{\Gamma(\mathcal{M})}$.
\end{proof}

Thus, $\Gamma$ is a functor from the category $\mathbb{COL}$ of complete orthomodular lattices to the category $\mathscr{T}\mathbb{ODA}$ of $\mathscr{T}$-based orthomodular dynamic algebras.

\medskip

\begin{theorem}\label{Func COL to TODA}
The map $\Gamma$ constitutes a functor from the category of complete orthomodular lattices, $\mathbb{COL}$, to the category of $\mathscr{T}$-based orthomodular dynamic algebras, $\mathbb{\mathscr{T}ODA}$.
\end{theorem}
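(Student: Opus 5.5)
The theorem follows by assembling the lemmas just proved, so the plan is to check the functor data and axioms one at a time.

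\textbf{Objects.} For a complete orthomodular lattice $\mathcal M$ we set $L_{\mathcal M}=\mathscr T(\mathbf{Lin}(\mathcal M))$ and $\Gamma(\mathcal M)=\mathscr P(L_{\mathcal M})$. We need $L_{\mathcal M}$ to be admissible input for Construction~\ref{constrpm}. By Proposition~\ref{prop:T-axioms-hold}(1) it is an involutive submonoid of $\mathbf{Lin}(\mathcal M)$ containing $\widetilde{\mathbf{Lin}(\mathcal M)}$. By Remark~\ref{rem:sasaki-proj-terminology} the latter is exactly $\{\pi_m\mid m\in M\}$. So all Sasaki projections lie in $L_{\mathcal M}$, and in fact $L_{\mathcal M}=\mathbf{Si}(M)=L_M^{\mathrm{can}}$ by Remark~\ref{rem:canonical-Si}. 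Lemma~\ref{PMisIDA} then makes $\mathscr P(L_{\mathcal M})$ an object of $\mathbb{IDA}$. Lemma~\ref{PMisTODA} shows it satisfies (\textbf{TODA1})--(\textbf{TODA4}), hence it is an object of $\mathscr T\mathbb{ODA}$.

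\textbf{Morphisms.} For an ortholattice isomorphism $k\colon\mathcal M_1\to\mathcal M_2$ we put $\Gamma(k)(A)=\{k\circ a\circ k^{-1}\mid a\in A\}$. This needs conjugation by $k$ to map $L_{M_1}$ into $L_{M_2}$. The identity $\pi_{k(m)}=k\circ\pi_m\circ k^{-1}$ shows each generator $\pi_m$ goes to $\pi_{k(m)}\in L_{M_2}$. Conjugation is multiplicative and commutes with ${}^*$ because $k^*=k^{-1}$, so every finite composite of Sasaki projections also lands in $\mathbf{Si}(M_2)=L_{M_2}$. This codomain check is the one step the earlier text passes over quickly, and I expect it to be the main point needing care. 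Granting it, Lemma~\ref{gammamor} shows $\Gamma(k)$ is a bijective $\mathbb{IDA}$-morphism, with inverse $\Gamma(k^{-1})$: it preserves unions, $\odot$, ${}^*$, ${\sim}$ and the unit. Hence $\Gamma(k)$ is a $\mathscr T\mathbb{ODA}$-morphism.

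\textbf{Functoriality.} Lemma~\ref{comp} gives $\Gamma(l\circ k)=\Gamma(l)\circ\Gamma(k)$, and Lemma~\ref{id} gives $\Gamma(\mathrm{id}_{\mathcal M})=\mathrm{id}_{\Gamma(\mathcal M)}$. Together with the object and morphism assignments above, this shows $\Gamma\colon\mathbb{COL}\to\mathscr T\mathbb{ODA}$ is a functor.
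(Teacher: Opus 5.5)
Your proposal is correct and follows the paper's proof, which likewise just assembles Lemmas~\ref{PMisTODA}, \ref{gammamor}, \ref{comp} and~\ref{id}. Your explicit check that conjugation by $k$ maps $L_{M_1}$ into $L_{M_2}$, via $\pi_{k(m)}=k\circ\pi_m\circ k^{-1}$, usefully fills in a step the paper dismisses with ``Clearly''.

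One caveat: that check, and the identification $L_{\mathcal M}=\mathbf{Si}(M)$, rely on the concrete $\mathscr T$ of Subsection~\ref{sec:concrete-T}. If $\mathscr T$ is only assumed to satisfy (\textbf{T1})--(\textbf{T4}), the same inclusion still follows: apply (\textbf{T4}) to the $\mathbb{IDA}$-isomorphism $f\mapsto k\circ f\circ k^{-1}$ from $\mathbf{Lin}(\mathcal M_1)$ onto $\mathbf{Lin}(\mathcal M_2)$.
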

\begin{proof}
The assertion follows directly from Lemmas \ref{PMisTODA}, \ref{gammamor}, \ref{comp} and \ref{id}.
\end{proof}

\section{The Functor $\Psi $ from  $\mathscr{T}$-based orthomodular dynamic algebras to complete orthomodular lattices} 
\label{sec:functor-psi}

This section defines a functor $\Psi \colon \mathscr{T}\mathbb{ODA} \to \mathbb{COL}$. The functor $\Psi$ assigns a complete orthomodular lattice to each $\mathscr{T}$-based orthomodular dynamic algebra and an ortholattice isomorphism to each $\mathscr{T}\mathbb{ODA}$-morphism.

\subsection{Mapping of Objects}

Let $\mathfrak{K} = (K, \bigsqcup, \odot, {}^{*}, \sim, e)$ be a $\mathscr{T}$-based orthomodular dynamic algebra. The object mapping of $\Psi$ is defined as $\Psi(\mathfrak{K}) = \widetilde{\mathfrak{K}} = (\widehat{K}, \preceq, \perp)$. By Definition \ref{finitary-goda}, $\Psi(\mathfrak{K})$ is a complete orthomodular lattice.


\subsection{Mapping of Arrows}

Let $\mathfrak{K}_1 = (K_1, \bigsqcup_1, \odot_1, {}^{*}_1, \sim_1, e_1)$ and $\mathfrak{K}_2 = (K_2, \bigsqcup_2, \odot_2, {}^{*}_2, \sim_2, e_2)$ be $\mathscr{T}$-based orthomodular dynamic algebras, and let $\phi \colon \mathfrak{K}_1 \to \mathfrak{K}_2$ be a $\mathscr{T}\mathbb{ODA}$-morphism.

We define $\Psi(\phi) = \widetilde{\phi} \colon \Psi(\mathfrak{K}_1) \to \Psi(\mathfrak{K}_2)$. By Lemma \ref{morisorthoiso}, $\Psi(\phi)$ is a $\mathbb{COL}$-morphism.

\medskip
The following theorem establishes that $\Psi$ is a functor from the category $\mathscr{T}\mathbb{ODA}$ of $\mathscr{T}$-based orthomodular dynamic algebras to the category $\mathbb{COL}$ of complete orthomodular lattices. This means $\Psi$ preserves both objects and the structure of morphisms between these categories.

\medskip

\begin{theorem}\label{thm:G-is-functor}
The map $\Psi$ constitutes a functor from the category of $\mathscr{T}$-based orthomodular dynamic algebras ($\mathscr{T}\mathbb{ODA}$) to the category of complete orthomodular lattices ($\mathbb{COL}$).
\end{theorem}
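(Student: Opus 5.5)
To prove Theorem~\ref{thm:G-is-functor} I will check, in order, that $\Psi$ is well defined on objects, well defined on arrows, preserves identities, and preserves composition. All four checks follow from the definitions together with Lemma~\ref{morisorthoiso}.

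\emph{Objects.} Let $\mathfrak{K}$ be a $\mathscr{T}$-based orthomodular dynamic algebra. By (\textbf{TODA1}), $\Psi(\mathfrak{K})=(\widetilde K,\preceq,{}^{\perp})$ is a complete orthomodular lattice, so it is an object of $\mathbb{COL}$. No further work is needed here.

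\emph{Arrows.} Let $\phi\colon\mathfrak{K}_1\to\mathfrak{K}_2$ be a $\mathscr{T}\mathbb{ODA}$-morphism. Lemma~\ref{morisorthoiso} shows that $\widetilde\phi$ maps $\widetilde K_1$ bijectively onto $\widetilde K_2$, is order-preserving, and commutes with ${}^{\perp}$. A $\mathbb{COL}$-morphism, however, must reflect the order as well: we need $x\preceq_1 y \iff \widetilde\phi(x)\preceq_2\widetilde\phi(y)$. I expect this to be the one step that is not purely formal, since the lemma only gives the forward implication.

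To get the converse, I will use the inverse map. Since $\phi$ is a bijective homomorphism of unital involutive quantales that also commutes with ${\sim}$, its set-theoretic inverse $\phi^{-1}$ preserves $\bigsqcup$, $\odot$, ${}^*$, $e$ and ${\sim}$. Hence $\phi^{-1}$ is again a $\mathscr{T}\mathbb{ODA}$-morphism. Applying Lemma~\ref{morisorthoiso} to $\phi^{-1}$ shows that $\widetilde{\phi^{-1}}=(\widetilde\phi)^{-1}$ is order-preserving, which is exactly the missing implication. Alternatively, the argument in the proof of Lemma~\ref{morisorthoiso} can be run backwards: from $\widetilde\phi(\bigvee_1\{x,y\})=\bigvee_2\{\widetilde\phi(x),\widetilde\phi(y)\}=\widetilde\phi(y)$ and injectivity of $\widetilde\phi$ we get $\bigvee_1\{x,y\}=y$.

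\emph{Functoriality.} Identities are immediate, since $\Psi(\mathrm{id}_{\mathfrak{K}})$ is the restriction of the identity map to $\widetilde K$, namely $\mathrm{id}_{\widetilde K}$. For composition, take $\phi\colon\mathfrak{K}_1\to\mathfrak{K}_2$ and $\psi\colon\mathfrak{K}_2\to\mathfrak{K}_3$. By Lemma~\ref{morisorthoiso}, $\phi$ maps $\widetilde K_1$ into $\widetilde K_2$, so restricting $\psi\circ\phi$ to $\widetilde K_1$ gives $\widetilde\psi\circ\widetilde\phi$. Therefore $\Psi(\psi\circ\phi)=\Psi(\psi)\circ\Psi(\phi)$, and $\Psi$ is a functor.
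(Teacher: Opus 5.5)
Your proposal is correct and follows the paper's route: objects via (\textbf{TODA1}), arrows via Lemma~\ref{morisorthoiso}, then the pointwise checks of identities and composition. Your extra check that $\widetilde\phi$ also reflects the order is valid. You can argue via $\phi^{-1}$ being a $\mathscr{T}\mathbb{ODA}$-morphism, or via join-preservation plus injectivity of $\widetilde\phi$. This fills in a direction that the paper's proof of Lemma~\ref{morisorthoiso} leaves implicit, and it is needed: an order-preserving, $\perp$-preserving bijection between orthomodular lattices need not reflect order.
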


\begin{proof}
Let $\mathfrak{K} = (K, \bigsqcup, \odot, {}^{*}, \sim, e)$ be a $\mathscr{T}$-based orthomodular dynamic algebra, i.e., an object in $\mathscr{T}\mathbb{ODA}$.

\noindent \textit{Preservation of Identity Morphisms}:
Consider the identity $\mathscr{T}\mathbb{ODA}$-morphism $\mathrm{id}_{\mathfrak{K}}: \mathfrak{K} \to \mathfrak{K}$. By the definition of $\Psi$, for any $v \in \widetilde K$ (where $\widetilde K$ is the underlying set of $\Psi(\mathfrak{K})$), we have:
$$ \Psi(\mathrm{id}_\mathfrak{K})(v) = \mathrm{id}_{\widetilde{\mathfrak{K}}}(v) = v.$$

This implies that $\Psi(\mathrm{id}_\mathfrak{K})$ acts as the identity mapping on $\widetilde K$, which is precisely $\mathrm{id}_{\Psi(\mathfrak{K})}$. Hence, $\Psi(\mathrm{id}_\mathfrak{K})=\mathrm{id}_{\Psi(\mathfrak{K})}$.

\noindent \textit{Preservation of Composition of Morphisms}:
Let $\mathfrak{K}_1$, $\mathfrak{K}_2$, and $\mathfrak{K}_3$ be $\mathscr{T}$-based orthomodular dynamic algebras, and let $\phi: \mathfrak{K}_1 \to \mathfrak{K}_2$ and $\varphi:\mathfrak{K}_2 \to \mathfrak{K}_3$ be $\mathscr{T}\mathbb{ODA}$-morphisms. For any $v \in \widetilde K_1$:
\begin{align*}
\Psi(\varphi\circ\phi)(v) &= (\varphi\circ\phi)(v) && \text{(by definition of } \Psi(\text{morphism})) \\
&= \varphi\bigl(\phi(v)\bigr) && \text{(by definition of function composition)} \\
&= \Psi(\varphi)\bigl(\phi(v)\bigr) && \text{(by definition of } \Psi(\text{morphism})) \\
&= \Psi(\varphi)\bigl(\Psi(\phi)(v)\bigr) && \text{(by definition of } \Psi(\text{morphism})) \\
&= \bigl(\Psi(\varphi)\circ \Psi(\phi)\bigr)(v) && \text{(by definition of function composition)}
\end{align*}
Since this equality holds for all $v \in \widetilde K_1$, we conclude that $\Psi(\varphi\circ\phi)=\Psi(\varphi)\circ \Psi(\phi)$.

Since $\Psi$ preserves identity morphisms and the composition of morphisms, it is a functor from $\mathscr{T}\mathbb{ODA}$ to $\mathbb{COL}$.
\end{proof}

\subsection{The Natural Isomorphism $\mu : 1_{\mathbb{COL}} \stackrel{\cong}{\Rightarrow} \Psi\circ \Gamma$}

From the definition of the functors $\Psi$ and $\Gamma$, for any orthomodular lattice $\mathcal{M}$, we have:
\[ \left( {\Psi \circ \Gamma } \right)\left( \mathcal{M} \right) = \Psi \left( {\Gamma \left( \mathcal{M} \right)} \right) = \Psi \left( \mathscr{P}(L_\mathcal{M}) \right) = \widetilde {\mathscr{P}(L_\mathcal{M})} \]

Our goal is to establish a natural isomorphism between the identity functor $1_{\mathbb{COL}}$ and the composite functor $\Psi \circ \Gamma$.

\medskip

\begin{theorem}\label{thm:tau-natural}
Let ${\mathcal{M}} = \left( {{M},{ \le },{}^{{ \bot }} } \right)$ be a complete orthomodular lattice, and let $\Gamma (\mathcal {M}) =\bigl(\mathscr{P}(L_\mathcal{M}) ,\bigcup,\odot,{\sim}, { - ^*}\bigr)$ be its corresponding $\mathscr{T}$-based orthomodular dynamic algebra. Furthermore, let $\delta : \mathcal{M} \to {\widetilde {\mathscr{P}(L_\mathcal{M})}}$ be the established ortholattice isomorphism, as defined in Theorem~\ref{prop:chi-iso-finitary}. Define a natural transformation $\mu : 1_{\mathbb{COL}} \Rightarrow \Psi\circ \Gamma$ such that for every orthomodular lattice $\mathcal M$, the component $\mu_{\mathcal M}$ is precisely $\delta$. Thus, $\mu$ is a natural isomorphism.
\end{theorem}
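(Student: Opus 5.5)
The proof splits into two parts. First we check that each component $\mu_{\mathcal M}=\delta$ is an isomorphism in $\mathbb{COL}$. Then we verify the naturality square.

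\emph{Components.} For the first part I would invoke Proposition~\ref{prop:chi-iso-finitary} directly, applied with $L_{\mathcal M}=\mathscr T(\mathbf{Lin}(\mathcal M))$; this $L_{\mathcal M}$ contains all Sasaki projections by Proposition~\ref{prop:T-axioms-hold}(\ref{item:T-basic-1}) together with Remark~\ref{rem:sasaki-proj-terminology}. That proposition already gives that $\delta(m)=\{\pi_m\}$ is a bijection onto $\widetilde{\mathscr P(L_{\mathcal M})}$ with $\delta(m^\perp)={\sim}\delta(m)$. What is still missing for an ortholattice isomorphism is the biconditional $m\le n\iff \delta(m)\preceq\delta(n)$. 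I would derive it from Lemma~\ref{lemma4}:
\[
\delta(m)\vee\delta(n)={\sim}{\sim}\{\pi_m,\pi_n\}=\{\pi_{m\vee n}\},
\]
so $\delta(m)\preceq\delta(n)$ iff $\{\pi_{m\vee n}\}=\{\pi_n\}$. By injectivity of $m\mapsto\pi_m$, which follows from evaluating at $1$, this holds iff $m\vee n=n$, i.e.\ iff $m\le n$. Since $\Psi(\Gamma(\mathcal M))$ is by definition $\widetilde{\mathscr P(L_{\mathcal M})}$ with order $\preceq$ and complement ${\sim}$, each $\mu_{\mathcal M}$ is then a $\mathbb{COL}$-morphism whose inverse is again an ortholattice isomorphism. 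Hence every component is an isomorphism.

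\emph{Naturality.} For an ortholattice isomorphism $k\colon\mathcal M_1\to\mathcal M_2$ I must show
\[
\Psi(\Gamma(k))\circ\delta_1=\delta_2\circ k .
\]
By definition $\Psi(\Gamma(k))$ is the restriction of $\Gamma(k)$ to the test sets. So for $m\in M_1$,
\[
\Psi(\Gamma(k))(\delta_1(m))=\Gamma(k)(\{\pi_m\})=\{k\circ\pi_m\circ k^{-1}\}.
\]
The computation preceding Lemma~\ref{gammamor} shows $k\circ\pi_m\circ k^{-1}=\pi_{k(m)}$, so the right-hand side equals $\{\pi_{k(m)}\}=\delta_2(k(m))$. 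This identity is the key input and is already available.

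The only point I expect to need care is well-definedness. One must check that $\Gamma(k)$ maps $\mathscr P(L_{M_1})$ into $\mathscr P(L_{M_2})$, i.e.\ that conjugation by $k$ sends $\mathscr T(\mathbf{Lin}(\mathcal M_1))$ into $\mathscr T(\mathbf{Lin}(\mathcal M_2))$. The paper asserts this when defining $\Gamma$ on arrows. It follows because conjugation by $k$ is an involutive monoid map sending the generators $\pi_m$ to the generators $\pi_{k(m)}$; this uses $k^*=k^{-1}$ and $\pi^*=\pi$. With this settled, both squares commute, and $\mu$ is a natural isomorphism.
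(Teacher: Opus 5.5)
Your proposal is correct and follows the paper's proof. The components are handled by Proposition~\ref{prop:chi-iso-finitary}, and naturality reduces to the identity $k\circ\pi_m\circ k^{-1}=\pi_{k(m)}$, which gives $\Gamma(k)(\{\pi_m\})=\{\pi_{k(m)}\}$; your extra checks (that $\mathscr T(\mathbf{Lin}(\mathcal M))$ contains every $\pi_m$, order-reflection via Lemma~\ref{lemma4}, and well-definedness of $\Gamma(k)$ because conjugation by $k$ is an involutive monoid map) are sound details the paper leaves implicit, though order-reflection is not actually missing: that proposition already states ``order-isomorphism'', and it follows from join-preservation plus injectivity.
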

\begin{proof}
From Proposition~\ref{prop:chi-iso-finitary}, we know that for each orthomodular lattice $\mathcal{M}$, the component $\mu_{\mathcal M} = \delta_{\mathcal M}$ is an $\mathbb{COL}$-isomorphism, thus ensuring its bijectivity.

To demonstrate that $\mu$ is a natural transformation, we must verify the commutativity of the following diagram for any $\mathbb{COL}$-morphism $k : \mathcal M_1 \to \mathcal M_2$:
\[
\begin{tikzcd}
\mathcal M_1 \ar[r,"k"] \ar[d,"\mu_{\mathcal M_1}"']
& \mathcal M_2 \ar[d,"\mu_{\mathcal M_2}"]\\
\Psi\bigl(\Gamma(\mathcal M_1)\bigr)\ar[r,"\Psi(\Gamma(k))"']
& \Psi\bigl(\Gamma(\mathcal M_2)\bigr)
\end{tikzcd}
\]
For any element $m \in M_1$, we evaluate both compositions:

1.  Path through $\mu_{\mathcal M_2} \circ k$ (by the definition of $\mu_{\mathcal M_2}$, which is $\delta_{\mathcal M_2}$):
    $$ (\mu_{\mathcal M_2}\circ k)(m) = \mu_{\mathcal M_2}(k(m)) = \{\pi_{k(m)}\} $$

2.  Path through $\Psi(\Gamma(k)) \circ \mu_{\mathcal M_1}$ (first by the definition of $\mu_{\mathcal M_1}$, which is $\delta_{\mathcal M_1}$, and then by the definition of the functors $\Psi$ and $\Gamma$):
\begin{align*}
    (\Psi(\Gamma(k))\circ\mu_{\mathcal M_1})(m) &= \Psi(\Gamma(k))(\mu_{\mathcal M_1}(m))=\Psi(\Gamma(k))(\{\pi_m\})= \Gamma(k)(\{\pi_m\})\\
    &=\{k \circ \pi_m \circ k^{-1}\}= \{\pi_{k(m)}\}.
\end{align*}

Comparing the results from both paths, we obtain that 
   $\Psi(\Gamma(k))\circ\mu_{\mathcal M_1}=\mu_{\mathcal M_2}\circ k$.

Given that each component $\mu_{\mathcal M}$ is an isomorphism and the naturality condition is satisfied, we conclude that $\mu$ is a natural isomorphism.
\end{proof}

\subsection{The Natural Isomorphism $\lambda : 1_{\mathscr{T}\mathbb{ODA}} \stackrel{\cong}{\Rightarrow} \Gamma \circ \Psi$}
\label{section:natural_iso_lambda}

Let $\mathfrak{K} = (K , {\bigsqcup } , {\odot } , {}{^*}, {\sim}, e)$   be a $\mathscr{T}$-based orthomodular dynamic algebra. Pursuant to Definition \ref{finitary-goda}, the associated structure $(\widetilde K , {\preceq } , {}^{\perp} )$ is a complete orthomodular lattice.

 By the definitions of the functors $\Psi$ and $\Gamma$, the action of the composite functor on $\mathfrak{K}$ yields the following algebraic structure:
    $$(\Gamma \circ \Psi)(\mathfrak{K}) = \Gamma(\Psi(\mathfrak{K})) = \Gamma(\widetilde {\mathfrak{K}}) = 
    \big(\mathscr{P}( \mathscr T(\mathbf{Lin}(\widetilde{\mathfrak{K}}))),%
    \bigcup,\odot, ^ *, {\sim}, \{\mathrm{id}_M\}\big).$$

     Lemma~\ref{lemma2} guarantees that every element $k \in K$ possesses a unique representation as a  join  of elements from $\mathscr T({\mathfrak{K}})$.

\medskip 

     We formally define the natural transformation $\lambda : 1_{\mathscr{T}\mathbb{ODA}} \Rightarrow \Gamma\circ\Psi$ by specifying its component map $\lambda_\mathfrak{K} : K \to \mathscr{P}( \mathscr T(\mathbf{Lin}(\widetilde{\mathfrak{K}})))$ for each $\mathfrak{K} \in \operatorname{Ob}(\mathscr{T}\mathbb{ODA})$. For an element $k \in K$ with its unique representation:
    $$k = \bigsqcup S_k, \ S_k\subseteq \mathscr T({\mathfrak{K}})$$
    the component map $\lambda_\mathfrak{K}$ is defined as:
    $${\lambda_{\mathfrak{K}}}(k) = \left\{\nu_{\mathfrak{K}}(s)\mid s\in S_k \right\}\subseteq \mathscr T(\mathbf{Lin}(\widetilde{\mathfrak{K}})).$$

Our subsequent objective is to demonstrate that $\lambda$ is a natural isomorphism.

\medskip 

\begin{theorem}\label{NI Lambda} The natural transformation 
$\lambda : 1_{\mathscr{T}\mathbb{ODA}} \stackrel{\cong}{\Rightarrow} \Gamma\circ\Psi$ is a natural isomorphism.
\end{theorem}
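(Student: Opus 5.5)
We must show two things: each component $\lambda_{\mathfrak K}$ is a $\mathscr{T}\mathbb{ODA}$-isomorphism, and $\lambda$ is natural. The plan is to factor $\lambda_{\mathfrak K}$ through the set-level representation of Lemma~\ref{lemma-atomic}. That is, we write
\[
\lambda_{\mathfrak K}=\mathscr P(\nu_{\mathfrak K})\circ h_{\mathfrak K},\qquad \mathscr P(\nu_{\mathfrak K})(A)=\{\nu_{\mathfrak K}(s)\mid s\in A\},
\]
where $h_{\mathfrak K}(k)=S_k$ by Lemma~\ref{lemma2}. Since $\mathfrak K$ satisfies (\textbf{TODA1}) and (\textbf{TODA4}), it is a semi-Foulis dynamic algebra with the separation property. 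Proposition~\ref{prop:T-axioms-hold}(2) therefore gives an isomorphism of involutive monoids $\nu_{\mathfrak K}\colon\mathscr T(\mathfrak K)\to\mathscr T(\mathbf{Lin}(\widetilde{\mathfrak K}))=L_{\widetilde{\mathfrak K}}$. Hence $\mathscr P(\nu_{\mathfrak K})$ is a bijection preserving unions. It also preserves setwise products, the involution and the unit $\{e\}\mapsto\{\mathrm{id}\}$, because $\nu_{\mathfrak K}$ does. Combined with the bijection $h_{\mathfrak K}$ with inverse $\bigsqcup$, this makes $\lambda_{\mathfrak K}$ a bijection preserving arbitrary joins.

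Next we check multiplicativity and the involution directly. For $k=\bigsqcup S_k$ and $l=\bigsqcup S_l$, distributivity gives $k\odot l=\bigsqcup\{s\odot t\mid s\in S_k,\,t\in S_l\}$ with all $s\odot t\in\mathscr T(K)$. Uniqueness in Lemma~\ref{lemma2} (i.e.\ (\textbf{TODA3})) then gives $S_{k\odot l}=S_k\odot S_l$ setwise. Applying $\nu_{\mathfrak K}$ yields $\lambda_{\mathfrak K}(k\odot l)=\lambda_{\mathfrak K}(k)\odot\lambda_{\mathfrak K}(l)$. The same argument handles ${}^{*}$ and the unit, the latter because $S_e=\{e\}$.

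The main obstacle is preservation of ${\sim}$. We must show $\lambda_{\mathfrak K}({\sim}k)=\{\pi_{(\bigvee_{s\in S_k}\nu(s)(1))^{\perp}}\}$. Since ${\sim}k\in\widetilde K\subseteq\mathscr T(K)$, and $\mathscr T(K)$ consists of atoms, we get $S_{{\sim}k}=\{{\sim}k\}$. Then $\lambda_{\mathfrak K}({\sim}k)=\{\nu({\sim}k)\}=\{\pi_{{\sim}k}\}$ by Lemma~\ref{Sasboth}. On the other side, $\nu(s)(1_{\widetilde K})=s\bullet{\sim}{\sim}1={\sim}{\sim}(s\odot 1)$. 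The join in $\widetilde K$ is computed by Theorem~\ref{thm:K-module} and (\textbf{IDA3}):
\[
\bigvee_{s\in S_k}{\sim}{\sim}(s\odot 1)={\sim}{\sim}\Bigl(\bigsqcup_{s}s\odot 1\Bigr)={\sim}{\sim}(k\odot 1).
\]
So it remains to prove ${\sim}k={\sim}(k\odot 1)$, i.e.\ ${\sim}k={\sim}{\sim}{\sim}(k\odot 1)$. For this, first use (\textbf{IDA2}) to write ${\sim}(k\odot 1)={\sim}(k\odot{\sim}{\sim}1)$. Then show ${\sim}{\sim}1=e$, the top of $\widetilde K$, which should follow from (\textbf{TODA1}) since $e\in\widetilde K$. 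Indeed $e={\sim}{\sim}e$ by (\textbf{IDA5}) with $x=y=e$ and orthomodularity, and $e\preceq{\sim}{\sim}1$. If $e={\sim}{\sim}1$ fails to be derivable directly, the fallback is to compare $k\bullet w$ against the greatest element and use $k\odot e=k$. I expect this identification to be the delicate point.

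Finally, for naturality we take a $\mathscr{T}\mathbb{ODA}$-morphism $\phi\colon\mathfrak K_1\to\mathfrak K_2$ and put $\kappa=\widetilde\phi$. By (\textbf{T4}) $\phi$ restricts to $\mathscr T(K_1)\to\mathscr T(K_2)$, and since it is a bijective join-preserving map, $S_{\phi(k)}=\phi(S_k)$. It then suffices to show $\nu_{\mathfrak K_2}(\phi(s))=\kappa\circ\nu_{\mathfrak K_1}(s)\circ\kappa^{-1}$. This holds because $\phi(s)\bullet\phi(w)={\sim}{\sim}(\phi(s)\odot\phi(w))=\phi(s\bullet w)$. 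Hence $\Gamma(\kappa)\circ\lambda_{\mathfrak K_1}=\lambda_{\mathfrak K_2}\circ\phi$, and $\lambda$ is a natural isomorphism.
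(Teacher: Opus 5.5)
Your route is the paper's: transport the normal form $k=\bigsqcup S_k$ along the involutive-monoid isomorphism $\nu_{\mathfrak K}$, use uniqueness of $S_k$ for joins, products, involution and unit, use the atom argument $S_{{\sim}k}=\{{\sim}k\}$ for ${\sim}$, and use $\phi(s)\bullet_2\phi(w)=\phi(s\bullet_1 w)$ for naturality. All of that is fine.

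\textbf{The gap.} The one genuine gap is the step you flagged: ${\sim}(k\odot 1)={\sim}k$, which requires identifying the top ${\sim}{\sim}1$ of $\widetilde K$. Your justification for this does not work, for three reasons.
\begin{itemize}
\item (\textbf{IDA5}) with $x=y=e$ only gives ${\sim}{\sim}e={\sim}({\sim}e\sqcup{\sim}({\sim}e\sqcup e))$. That is an identity between two elements of $\widetilde K$, and it says nothing about $e$ itself.
\item In fact $e\in\widetilde K$ does not follow from (\textbf{IDA1})--(\textbf{IDA5}) and (\textbf{TODA1}) at all. In $\mathscr P(L_{\mathcal M})\times\mathbf 2$ with ${\sim}(A,b):=({\sim}A,0)$, all of these hold, yet $e\notin\widetilde K$.
\item Even granting $e\in\widetilde K$, the inequality $e\preceq{\sim}{\sim}1$ is the trivial direction; you need ${\sim}{\sim}1\preceq e$.
\end{itemize}
The ``fallback'' is not an argument. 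The paper does not help here either: it simply asserts that the top of $\widetilde{\mathfrak K}$ is $e$ ``by (\textbf{TODA1})''.

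\textbf{The fix.} What you actually need is only ${\sim}{\sim}e={\sim}{\sim}1$, and this follows from Lemma~\ref{Sasboth}. Put $u={\sim}e\in\widetilde K$. By (\textbf{IDA2}) and Theorem~\ref{thm:K-module},
\[
u\bullet({\sim}{\sim}e)={\sim}{\sim}({\sim}e\odot{\sim}{\sim}e)={\sim}{\sim}({\sim}e\odot e)={\sim}{\sim}{\sim}e={\sim}e .
\]
On the other hand, Lemma~\ref{Sasboth} gives
\[
u\bullet({\sim}{\sim}e)=\pi_{{\sim}e}({\sim}{\sim}e)={\sim}e\wedge({\sim}{\sim}e\vee{\sim}{\sim}e)={\sim}e\wedge({\sim}e)^{\perp}=0 .
\]
So ${\sim}e$ is the bottom of $\widetilde K$, and ${\sim}{\sim}e$ is its top ${\sim}{\sim}1$. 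Two applications of (\textbf{IDA2}) then give
\[
{\sim}(k\odot 1)={\sim}(k\odot{\sim}{\sim}1)={\sim}(k\odot{\sim}{\sim}e)={\sim}(k\odot e)={\sim}k ,
\]
which closes your ${\sim}$-step. Equivalently, $\nu_{\mathfrak K}(s)(1)=s\bullet{\sim}{\sim}e={\sim}{\sim}s$ directly.

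If you want the paper's formulation $1_{\widetilde K}=e$, you must add (\textbf{TODA4}). Both $e$ and ${\sim}{\sim}e$ lie in $\mathscr T(K)$, and both act as the identity on $\widetilde K$ (the latter because ${\sim}{\sim}e\bullet(-)=\pi_1$). Hence $e\equiv{\sim}{\sim}e$, and therefore $e={\sim}{\sim}e$. With this inserted, the rest of your proposal goes through as written.
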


\begin{proof}
The proof consists of two main steps: first, showing  that each component $\lambda_\mathfrak{K}$ is a $\mathscr{T}\mathbb{ODA}$–isomorphism, and second, verifying the naturality condition.

\medskip 

\begin{enumerate}[label=\arabic*., ref=\arabic*]
    \item \label{subsection:isomorphism} \textbf{Component Isomorphism ($\lambda_\mathfrak{K}$ is a $\mathscr{T}\mathbb{ODA}$–Isomorphism)}

    \noindent{}Let $\mathfrak{K}$ be an arbitrary $\mathscr{T}$-based orthomodular dynamic algebra. The map $\lambda_{\mathfrak{K}} \colon K \to (\Gamma \circ \Psi)(\mathfrak{K})$, defined above, is a $\mathscr{T}\mathbb{ODA}$–isomorphism for the following reasons:

    Fix $\mathfrak K=(K,\bigsqcup,\odot,{}^{*},{\sim},e)\in\mathscr{T}\mathbb{ODA}$.
By {\rm({\bfseries TODA1})}, $\widetilde{\mathfrak K}$ is a complete orthomodular lattice, hence
$\mathbf{Lin}(\widetilde{\mathfrak K})$ is a Foulis quantale. In particular,
$\mathscr{T}(\mathbf{Lin}(\widetilde{\mathfrak K}))$ is well-defined.

Since $\mathfrak{K}$ is $\mathscr{T}$-based and satisfies 
{\rm({\bfseries TODA4})}, property ({\bfseries T2}) of functor  $\mathscr{T}$ provides an isomorphism of involutive monoids:
\[
\nu_{\mathfrak K}:\mathscr{T}(\mathfrak K)\longrightarrow
\mathscr{T}\bigl(\mathbf{Lin}(\widetilde{\mathfrak K})\bigr),
\qquad
\nu_{\mathfrak K}(t):=t\bullet(-).
\]
For each $k\in K$, let $S_k\subseteq \mathscr{T}(\mathfrak K)$ be the \emph{unique} set
such that
\begin{equation}\label{eq:Sk-normal-form}
k=\bigsqcup S_k,
\end{equation}
whose existence and uniqueness are guaranteed by Lemma~{\rm 4.3}.
Define
\[
\lambda_{\mathfrak K}:K\longrightarrow
\mathscr{P}\!\left(\mathscr{T}\bigl(\mathbf{Lin}(\widetilde{\mathfrak K})\bigr)\right),
\qquad
\lambda_{\mathfrak K}(k):=\{\nu_{\mathfrak K}(t)\mid t\in S_k\}.
\]

\medskip
\noindent\textbf{Step 1: $\lambda_{\mathfrak K}$ is bijective.}

\smallskip
\noindent\emph{Injectivity.}
Suppose $\lambda_{\mathfrak K}(k)=\lambda_{\mathfrak K}(\ell)$.
Then,
\[
\{\nu_{\mathfrak K}(t)\mid t\in S_k\}=\{\nu_{\mathfrak K}(s)\mid s\in S_\ell\}.
\]
Since $\nu_{\mathfrak K}$ is injective, we get $S_k=S_\ell$.
Hence by \eqref{eq:Sk-normal-form},
\[
k=\bigsqcup S_k=\bigsqcup S_\ell=\ell,
\]
so $\lambda_{\mathfrak K}$ is injective.

\smallskip
\noindent\emph{Surjectivity.}
Let $X\subseteq \mathscr{T}(\mathbf{Lin}(\widetilde{\mathfrak K}))$ be arbitrary.
Since $\nu_{\mathfrak K}$ is bijective, there is a unique subset
$S\subseteq \mathscr{T}(\mathfrak K)$ such that $X=\nu_{\mathfrak K}[S]$.
Let $k=\bigsqcup S\in K$.
By the uniqueness in Lemma~\ref{lemma2}, $S_k=S$, and thus 
\[
\lambda_{\mathfrak K}(k)=\{\nu_{\mathfrak K}(t)\mid t\in S_k\}
=\{\nu_{\mathfrak K}(t)\mid t\in S\}=X.
\]
Thus, $\lambda_{\mathfrak K}$ is surjective.

\medskip
\noindent\textbf{Step 2: $\lambda_{\mathfrak K}$ preserves the operations.}

\medskip 

Recall that $\Gamma(\Psi(\mathfrak K))=
\mathscr{P}(\mathscr{T}(\mathbf{Lin}(\widetilde{\mathfrak K})))$ has: 
\begin{itemize}
    \item Joins given by set union,
    \item Products given by setwise composition:
\[ A \odot B = \{f \circ g \mid f \in A, g \in B\}, \]
\item Involution given setwise:
\[ A^* = \{f^* \mid f \in A\}, \]
\item Unit given by $\{ \mathrm{id}_{\widetilde{\mathfrak{K}}} \}$.
\end{itemize}

\medskip
\noindent\emph{Preservation of arbitrary joins.}
Let $\{k_\alpha\}_{\alpha\in\Lambda}\subseteq K$ and write $k_\alpha=\bigsqcup S_{k_\alpha}$.
Then
\[
\bigsqcup_{\alpha\in\Lambda}k_\alpha
=\bigsqcup_{\alpha\in\Lambda}\bigsqcup S_{k_\alpha}
=\bigsqcup\Bigl(\bigcup_{\alpha\in\Lambda}S_{k_\alpha}\Bigr).
\]
By the uniqueness of the decomposition we have
$S_{\bigsqcup_{\alpha}k_\alpha}=\bigcup_{\alpha}S_{k_\alpha}$.
Therefore, 
\[
\lambda_{\mathfrak K}\Bigl(\bigsqcup_{\alpha\in\Lambda}k_\alpha\Bigr)
=\{\nu_{\mathfrak K}(t)\mid t\in \bigcup_{\alpha}S_{k_\alpha}\}
=\bigcup_{\alpha\in\Lambda}\{\nu_{\mathfrak K}(t)\mid t\in S_{k_\alpha}\}
=\bigcup_{\alpha\in\Lambda}\lambda_{\mathfrak K}(k_\alpha).
\]

\smallskip
\noindent\emph{Preservation of multiplication.}
Let $k,\ell\in K$ with $k=\bigsqcup S_k$ and $\ell=\bigsqcup S_\ell$.
Since $\odot$ distributes over arbitrary joins in each coordinate, we obtain
\[
k\odot \ell
=\Bigl(\bigsqcup S_k\Bigr)\odot\Bigl(\bigsqcup S_\ell\Bigr)
=\bigsqcup\{\,s\odot t\mid s\in S_k,\ t\in S_\ell\,\}.
\]
Thus $S_{k\odot\ell}=\{s\odot t\mid s\in S_k,\ t\in S_\ell\}$ by uniqueness.
Therefore
\begin{align*}
\lambda_{\mathfrak K}(k\odot\ell)
&=\{\nu_{\mathfrak K}(s\odot t)\mid s\in S_k,\ t\in S_\ell\}\\
&=\{\nu_{\mathfrak K}(s)\circ \nu_{\mathfrak K}(t)\mid s\in S_k,\ t\in S_\ell\}
\qquad(\text{$\nu_{\mathfrak K}$ is a monoid homomorphism})\\
&=\lambda_{\mathfrak K}(k)\odot \lambda_{\mathfrak K}(\ell).
\end{align*}

\smallskip
\noindent\emph{Preservation of involution.}
Let $k=\bigsqcup S_k$. Since ${}^*$ preserves arbitrary joins,
\[
k^{*}=\Bigl(\bigsqcup S_k\Bigr)^{*}
=\bigsqcup\{\,s^{*}\mid s\in S_k\,\},
\]
hence $S_{k^{*}}=\{s^{*}\mid s\in S_k\}$ by uniqueness.
Consequently,
\begin{align*}
\lambda_{\mathfrak K}(k^{*})
&=\{\nu_{\mathfrak K}(s^{*})\mid s\in S_k\}
=\{\nu_{\mathfrak K}(s)^{*}\mid s\in S_k\}
=\lambda_{\mathfrak K}(k)^{*},
\end{align*}
since $\nu_{\mathfrak K}$ is a homomorphism of involutive monoids.

\smallskip
\noindent\emph{Preservation of the unit.}
Since $e\in \mathscr{T}(\mathfrak K)$ and $e=\bigsqcup\{e\}$, we have $S_e=\{e\}$. Thus, 
\[
\lambda_{\mathfrak K}(e)=\{\nu_{\mathfrak K}(e)\}=\{id_{\widetilde{\mathfrak K}}\}.
\]

Thus $\lambda_{\mathfrak K}$ is an isomorphism in $\mathscr{T}\mathbb{ODA}$.

\smallskip
\noindent\emph{Preservation of ${\sim}$.}
Let $k\in K$ and write $k=\bigsqcup S_k$.
Since ${\sim} k\in \widetilde{\mathfrak K}\subseteq \mathscr{T}(\mathfrak K)$, it is an atom of $K$
by Lemma~\ref{lemma-atomic}\textup{(\emph{i})}. Hence
\[
S_{{\sim} k}=\{{\sim} k\},
\quad\text{and so}\quad
\lambda_{\mathfrak K}({\sim} k)=\{\nu_{\mathfrak K}({\sim} k)\}.
\]
On the other hand, in $\mathscr{P}\!\bigl(\mathscr{T}(\mathbf{Lin}(\widetilde{\mathfrak K}))\bigr)$, 
the operation ${\sim}$ is given by Construction~\ref{constrpm}\textup{(5)}:
\[
{\sim} A=\Bigl\{\pi_{\bigl(\bigvee_{a\in A} a(1)\bigr)^{\perp}}\Bigr\}.
\]
Applying this to $A=\lambda_{\mathfrak K}(k)=\{\nu_{\mathfrak K}(s)\mid s\in S_k\}$ gives
\[
{\sim}\lambda_{\mathfrak K}(k)
=
\Bigl\{\pi_{\bigl(\bigvee_{s\in S_k}\nu_{\mathfrak K}(s)(1)\bigr)^{\perp}}\Bigr\}.
\]
By \textup{({\bfseries TODA1})}, the top element $1$ of the orthomodular lattice $\widetilde{\mathfrak K}$ is $e$,
so $1=e$ in $\widetilde{\mathfrak K}$. Therefore
\[
\nu_{\mathfrak K}(s)(1)=\nu_{\mathfrak K}(s)(e)=s\bullet e.
\]
By Theorem~\ref{thm:K-module}, $s\bullet e={\sim}{\sim}(s\odot e)={\sim}{\sim} s$, 
since $e$ is the unit of $\odot$. Hence
\[
\bigvee_{s\in S_k}\nu_{\mathfrak K}(s)(1)
=
\bigvee_{s\in S_k}{\sim}{\sim} s
=
{\sim}{\sim}\Bigl(\bigsqcup_{s\in S_k}s\Bigr)
=
{\sim}{\sim} k,
\]
where we used join-preservation of ${\sim}{\sim}$ (Proposition~\ref{prop:K-module}).
Consequently,
\[
{\sim}\lambda_{\mathfrak K}(k)
=
\Bigl\{\pi_{({\sim}{\sim} k)^{\perp}}\Bigr\}
=
\{\pi_{{\sim} k}\}
=
\{\nu_{\mathfrak K}({\sim} k)\}
=
\lambda_{\mathfrak K}({\sim} k),
\]
using $x^{\perp}:={\sim} x$ on $\widetilde{\mathfrak K}$ and
${\sim}({\sim}{\sim} k)={\sim} k$ (Theorem~\ref{thm:K-module}).

\medskip

    \item \label{subsection:naturality} \textbf{Naturality of $\lambda$}

    We now show  that the collection of maps $\lambda = \{\lambda_\mathfrak{K}\}$ satisfies the naturality condition. For any $\mathscr{T}\mathbb{ODA}$–morphism $\phi: \mathfrak{K}_1\to\mathfrak{K}_2$, the following diagram must commute:

    $$
    \begin{tikzcd}
    \mathfrak{K}_1 \ar[r,"{\lambda_{\mathfrak{K}_1}}"] \ar[d,"{\phi}"'] &
    (\Gamma\circ\Psi)(\mathfrak{K}_1) \ar[d,"{(\Gamma\circ\Psi)(\phi)}"] \\
    \mathfrak{K}_2 \ar[r,"{\lambda_{\mathfrak{K}_2}}"'] &
    (\Gamma\circ\Psi)(\mathfrak{K}_2)
    \end{tikzcd}
    $$

    Consider an arbitrary element $k \in K_1$, with its unique representation $k =\bigsqcup S_k$, where $S_k\subseteq  \mathscr T({\mathfrak{K}})$.

    \begin{enumerate}[label=\textbf{Path \arabic*:}]
        \item \label{subsubsection:flow1}  {$\lambda_{\mathfrak{K}_2} \circ \phi$}

        Since $\phi$ is a $\mathscr{T}\mathbb{ODA}$–morphism, it preserves the join and product operations, and maps elements from $\widetilde{K}_1$ to $\widetilde{K}_2$.
        \vspace{0,2cm}
        \begin{center}
            $
        \phi(k) = \phi\!\left(\bigsqcup S_k\right) = %
        \bigsqcup \phi\!\left(S_k\right).
        $
        \end{center}
        \vspace{0,2cm}
        Applying the component map $\lambda_{\mathfrak{K}_2}$ to the result:
        \begin{equation}
        \tag{A}
        (\lambda_{\mathfrak{K}_2})(\phi(k)) = \left(\nu_{\mathfrak{K}_2}\circ \phi\right)\!\left(S_k\right)=%
        \{\phi(s)\bullet_2 (-)\mid s\in S_k\}.
        \end{equation}

        \item \label{subsubsection:flow2} \textbf{$(\Gamma \circ \Psi)(\phi) \circ \lambda_{\mathfrak{K}_1}$}

        First, applying $\lambda_{\mathfrak{K}_1}$ to $k$:
        $$
        (\lambda_{\mathfrak{K}_1})(k) = \nu_{\mathfrak{K}_1}\left(S_k\right)=
         \{s\bullet_1 (-)\mid s\in S_k\}.
        $$
        Next, we apply the functorial action $(\Gamma \circ \Psi)(\phi)$. Recall that $\Psi(\phi)$ is the restriction $\phi|_{\widetilde{K}_1}$, which is an $\mathbb{COL}$-morphism. 
        For every $X\in \mathscr{P}(\mathscr T(\mathbf{Lin}(\widetilde{\mathfrak{K}_1})))$
        $$
        (\Gamma \circ \Psi)(\phi)(X) = \Bigl\{ \Psi(\phi) \circ x \circ \Psi(\phi)^{-1} \;\Big|\; x \in X \Bigr\}
        $$
        For a $z\in {\widetilde{K}_2}$ and $s\in S_k$, we compute:
        \begin{align*}
            \left(\Psi(\phi) \circ (s\bullet_1 (-)) \circ \Psi(\phi)^{-1}\right)(z)&=%
            \left(\Psi(\phi) \circ (s\bullet_1 (-))\right)%
            \left(\Psi(\phi)^{-1}(z)\right)\\
            &=\phi \left(s\bullet_1 (\Psi(\phi)^{-1}(z))\right)\\%
             &=\phi (s)\bullet_2 (\phi (\phi^{-1}(z))%
             =\phi (s)\bullet_2 z.
        \end{align*}
       
        \noindent{}Applying this to $\lambda_{\mathfrak{K}_1}(k)$:
        \begin{equation}
\tag{B}
\begin{split}
((\Gamma \circ \Psi)(\phi) \circ \lambda_{\mathfrak{K}_1})(k) &= \Psi(\phi) \circ \nu_{\mathfrak{K}_1}\left(S_k\right) \circ \Psi(\phi)^{-1}\\
&= \{\phi(s)\bullet_2 (-)\mid s\in S_k\}.
\end{split}
\end{equation}
    \end{enumerate}

    Since Expression (A) is equal to Expression (B), the diagram commutes.

\end{enumerate}

Since all components $\lambda_{\mathfrak{K}}$ are isomorphisms and the naturality condition is satisfied, the natural transformation $\lambda : 1_{\mathscr{T}\mathbb{ODA}} \Rightarrow \Gamma \circ \Psi$ is a natural isomorphism.
\end{proof}

The following corollary establishes the mutual isomorphism among the three forms of $\mathscr{T}$-based orthomodular dynamic algebras, highlighting their structural equivalence.

\medskip
\begin{corollary}\label{isomut}
  Let $\mathfrak{K} = (K, {\bigsqcup}, {\odot}, {}{^*}, {\sim}, e)$   be a $\mathscr{T}$-based orthomodular dynamic algebra. Then $\mathfrak{K}$, $\big(\mathscr{P}(\mathscr T(\mathbf{Lin}(\widetilde{\mathfrak{K}}))),%
    \bigcup, \odot, {}^*, {\sim}, \{\mathrm{id}_M\}\big)$ and $\bigl(\mathscr{P}(\mathscr{T}(K)), \bigcup, \sqdot, {}^{\star}, \sqtilde, \{e\})$ are mutually isomorphic $\mathscr{T}$-based orthomodular dynamic algebras.
\end{corollary}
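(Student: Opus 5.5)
The corollary is a matter of assembling isomorphisms already constructed, so the plan is to exhibit two isomorphisms out of $\mathfrak{K}$ and use that isomorphism in $\mathscr{T}\mathbb{ODA}$ is an equivalence relation. First, Lemma~\ref{lemma-atomic}(ii) gives directly that $\bigl(\mathscr{P}(\mathscr{T}(K)),\bigcup,\sqdot,{}^{\star},\sqtilde,\{e\}\bigr)$ is a $\mathscr{T}$-based orthomodular dynamic algebra. It also states that $h_{\mathfrak{K}}\colon \mathfrak{K}\to\mathscr{P}(\mathscr{T}(\mathfrak{K}))$ is an isomorphism. I would double-check that $h_{\mathfrak{K}}$ is a bijective morphism of involutive generalized dynamic algebras in the sense of Definition~\ref{q-morphism}. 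By Lemma~\ref{lemma2} it is a bijection with inverse $\bigsqcup$. The lifted operations are defined by transport of structure, so, for example,
\[
h_{\mathfrak{K}}(k\odot l)=h_{\mathfrak{K}}\Bigl(\bigsqcup h_{\mathfrak{K}}(k)\odot\bigsqcup h_{\mathfrak{K}}(l)\Bigr)=h_{\mathfrak{K}}(k)\,\sqdot\,h_{\mathfrak{K}}(l).
\]
The involution ${}^{\star}$ and negation $\sqtilde$ are handled the same way. For joins, $h_{\mathfrak{K}}(\bigsqcup_\alpha k_\alpha)=\bigcup_\alpha h_{\mathfrak{K}}(k_\alpha)$ by the uniqueness of decompositions in Lemma~\ref{lemma2}. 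For the unit, $h_{\mathfrak{K}}(e)=\{e\}$, because $e\in\mathscr{T}(K)$ is an atom by Lemma~\ref{lemma-atomic}(i).

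Second, Theorem~\ref{NI Lambda} provides the component $\lambda_{\mathfrak{K}}\colon\mathfrak{K}\to(\Gamma\circ\Psi)(\mathfrak{K})$, which is a $\mathscr{T}\mathbb{ODA}$-isomorphism. By construction of $\Gamma$ and $\Psi$, its codomain is exactly $\bigl(\mathscr{P}(\mathscr T(\mathbf{Lin}(\widetilde{\mathfrak{K}}))),\bigcup,\odot,{}^*,{\sim},\{\mathrm{id}\}\bigr)$. That this structure is a $\mathscr{T}$-based orthomodular dynamic algebra follows from Lemma~\ref{PMisTODA} applied to the complete orthomodular lattice $\widetilde{\mathfrak{K}}$, which is complete by (\textbf{TODA1}).

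Finally, I would compose the isomorphisms. The inverse of a bijective morphism of involutive generalized dynamic algebras again preserves joins, $\odot$, ${}^*$, ${\sim}$ and the unit, since all of these are equational conditions. Hence the inverses are $\mathscr{T}\mathbb{ODA}$-morphisms. The map $\lambda_{\mathfrak{K}}\circ h_{\mathfrak{K}}^{-1}$ then gives the third isomorphism, between $\mathscr{P}(\mathscr{T}(\mathfrak{K}))$ and $\mathscr{P}(\mathscr T(\mathbf{Lin}(\widetilde{\mathfrak{K}})))$.

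The only step needing care is the unit condition for $h_{\mathfrak{K}}$: it relies on $e$ being an atom lying in $\mathscr{T}(K)$, so that its decomposition set is $\{e\}$. Everything else is transport of structure.
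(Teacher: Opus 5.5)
Your proposal is correct and is essentially the argument the paper intends: the corollary is stated without a separate proof, as a direct combination of the isomorphism $h_{\mathfrak K}$ from Lemma~\ref{lemma-atomic}(ii) with the component $\lambda_{\mathfrak K}$ of Theorem~\ref{NI Lambda}, together with closure of $\mathscr{T}\mathbb{ODA}$-isomorphisms under inverses and composition. One small simplification is that the unit check does not need atomicity, since $e\in\mathscr{T}(K)$ and $h_{\mathfrak K}(\bigsqcup A)=A$ applied to $A=\{e\}$ already give $h_{\mathfrak K}(e)=\{e\}$.
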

\medskip
The preceding analysis yields the following conclusions.
\medskip

\begin{theorem}The quadruple $\left(\Gamma, \Psi, \mu, \lambda\right)$ establishes a categorical equivalence between $\mathbb{COL}$ and $\mathscr{T}\mathbb{ODA}$.\end{theorem}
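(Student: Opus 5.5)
The plan is to assemble the equivalence directly from the results already established, using the standard definition. An equivalence of categories consists of functors $\Gamma\colon\mathbb{COL}\to\mathscr{T}\mathbb{ODA}$ and $\Psi\colon\mathscr{T}\mathbb{ODA}\to\mathbb{COL}$, together with natural isomorphisms $1_{\mathbb{COL}}\cong\Psi\circ\Gamma$ and $1_{\mathscr{T}\mathbb{ODA}}\cong\Gamma\circ\Psi$.

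The proof will proceed in four steps.
\begin{enumerate}
\item Theorem~\ref{Func COL to TODA} gives that $\Gamma$ is a functor. This rests on Lemma~\ref{PMisTODA} for objects and on Lemmas~\ref{gammamor}, \ref{comp} and \ref{id} for arrows.
\item Theorem~\ref{thm:G-is-functor} gives that $\Psi$ is a functor. Well-definedness on arrows comes from Lemma~\ref{morisorthoiso}.
\item Theorem~\ref{thm:tau-natural} supplies $\mu\colon 1_{\mathbb{COL}}\Rightarrow\Psi\circ\Gamma$ with components $\delta_{\mathcal M}$. These are $\mathbb{COL}$-isomorphisms by Proposition~\ref{prop:chi-iso-finitary}, and naturality reduces to the identity $\pi_{k(m)}=k\circ\pi_m\circ k^{-1}$.
\item Theorem~\ref{NI Lambda} supplies $\lambda\colon 1_{\mathscr{T}\mathbb{ODA}}\Rightarrow\Gamma\circ\Psi$.
\end{enumerate}

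Before concluding, I would check two points about $\lambda$ that the paper's ordering leaves implicit.

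First, each $\lambda_{\mathfrak K}$ must be a genuine isomorphism in $\mathscr{T}\mathbb{ODA}$, and not merely a bijective $\mathbb{IDA}$-morphism. Since $\mathscr{T}\mathbb{ODA}$-morphisms are exactly bijective $\mathbb{IDA}$-morphisms, the inverse of a bijective morphism that preserves joins, $\odot$, ${}^*$, $e$ and ${\sim}$ automatically preserves them as well. So $\lambda_{\mathfrak K}^{-1}$ is again a $\mathscr{T}\mathbb{ODA}$-morphism. The same remark applies to $\Gamma(k)$, whose inverse is $\Gamma(k^{-1})$.

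Second, the components $\mu_{\mathcal M}$ must be inverses in $\mathbb{COL}$. This is immediate, because $\mathbb{COL}$-morphisms are ortholattice isomorphisms and $\delta^{-1}$ is again one.

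With these checks in place, the quadruple $(\Gamma,\Psi,\mu,\lambda)$ meets the definition of an equivalence. If the stricter notion of an adjoint equivalence is wanted, I would add a remark that any equivalence can be promoted to one by modifying $\lambda$. Alternatively, one can verify the triangle identities directly: both composites evaluate on tests to $\{\pi_u\}\mapsto\{\pi_u\}$, using $\nu_{\mathfrak K}(u)=\pi_u$ from Lemma~\ref{Sasboth}.

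The main obstacle I expect is the naturality square for $\lambda$. The issue is that $\lambda_{\mathfrak K_2}(\phi(k))$ must be computed from the decomposition $S_{\phi(k)}$, and one must know that $S_{\phi(k)}=\phi[S_k]$. This requires $\phi$ to map $\mathscr{T}(\mathfrak K_1)$ into $\mathscr{T}(\mathfrak K_2)$, which holds by (\textbf{T4}) and Proposition~\ref{prop:T-axioms-hold}(\ref{item:T-basic-4}). It also requires the uniqueness of the normal form from Lemma~\ref{lemma2} in $\mathfrak K_2$.

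The conjugation formula $\Psi(\phi)\circ(s\bullet_1-)\circ\Psi(\phi)^{-1}=\phi(s)\bullet_2-$ then depends on $\phi$ commuting with ${\sim}{\sim}$ and $\odot$. The identity $\phi(s\bullet_1 w)=\phi(s)\bullet_2\phi(w)$ is therefore the one point I would write out explicitly; everything else is citation.
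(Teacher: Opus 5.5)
Your proposal is correct and follows the paper's proof: the paper also just assembles the functoriality of $\Gamma$ and $\Psi$ with the natural isomorphisms $\mu\colon 1_{\mathbb{COL}}\Rightarrow\Psi\circ\Gamma$ and $\lambda\colon 1_{\mathscr{T}\mathbb{ODA}}\Rightarrow\Gamma\circ\Psi$ from the preceding theorems. Your extra checks do not change the route; they make explicit points the paper leaves implicit, in particular that inverses of bijective $\mathbb{IDA}$-morphisms are again morphisms, and that $S_{\phi(k)}=\phi[S_k]$ in the naturality of $\lambda$ (which follows from $\phi(\mathscr{T}(K_1))\subseteq\mathscr{T}(K_2)$ and uniqueness of the normal form in $\mathfrak K_2$).
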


\begin{proof}To demonstrate the categorical equivalence between $\mathbb{COL}$ and $\mathscr{T}\mathbb{ODA}$, it suffices to show the existence of two functors, $\Gamma: \mathbb{COL} \to \mathscr{T}\mathbb{ODA}$ and $\Psi: \mathscr{T}\mathbb{ODA} \to \mathbb{COL}$, together with two natural isomorphisms, $\mu: 1_{\mathbb{COL}} \stackrel{\cong}{\Rightarrow} \Psi \circ \Gamma$ and $\lambda: 1_{\mathscr{T}\mathbb{ODA}} \stackrel{\cong}{\Rightarrow} \Gamma \circ \Psi$.

The functoriality of $\Gamma: \mathbb{COL} \to \mathscr{T}\mathbb{ODA}$ is established by Theorem \ref{Func COL to TODA}. Similarly, $\Psi: \mathscr{T}\mathbb{ODA} \to \mathbb{COL}$ is a functor, as proved in Theorem \ref{thm:G-is-functor}. Furthermore, Theorem \ref{thm:tau-natural} demonstrates that $\mu$ is a natural isomorphism from the identity functor $1_{\mathbb{COL}}$ to the composite functor $\Psi \circ \Gamma$. Dually, Theorem \ref{NI Lambda} confirms that $\lambda$ is a natural isomorphism from $1_{\mathscr{T}\mathbb{ODA}}$ to $\Gamma \circ \Psi$. Since these functors and the required natural isomorphisms exist, the quadruple $\left(\Gamma, \Psi, \mu, \lambda\right)$ fulfills the defining conditions of a categorical equivalence.\end{proof}

\begin{example}\rm 
Since the concrete functor $\mathscr{T}$ from Subsection 
\ref{sec:concrete-T} satisfies conditions ({\bf T1})-({\bf T4}) from Section \ref{sectionTODA}, we obtain a categorical equivalence between the orthomodular semi-Foulis dynamic algebras (which form a subcategory of the orthomodular dynamic algebras introduced by \cite{KRSZ}) and complete orthomodular lattices. Moreover, their basic example from \cite[Section 3]{KRSZ} is, up to isomorphism, exactly our $\mathscr{T}$-based orthomodular dynamic algebra $\mathscr{P}(\mathscr{T}(\mathbf{Lin}(\mathcal{M})))$ from Proposition \ref{prop:T-axioms-hold}.
\end{example}

\medskip 

It is worth mentioning that every orthomodular dynamic algebra from \cite{KRSZ} can be turned into a $\mathscr{T}$-based orthomodular dynamic algebra by defining the involution on the generators as the identity and extending it freely using multiplication and join.

\section*{Conclusion}

This paper establishes a categorical equivalence between the category $\mathbb{COL}$ of complete orthomodular lattices (COL) and the category $\mathscr{T}\mathbb{ODA}$ of $\mathscr{T}$-based orthomodular dynamic algebras. By constructing explicit functors and natural isomorphisms, we provide a clear and rigorous bridge between static quantum logic structures captured by orthomodular lattices and the dynamic quantum actions formalized within these specialized quantale-based algebras.

The equivalence elaborated here generalizes prior results by internalizing the dynamics into the structure of involutive generalized dynamic algebras, with $\mathscr{T}$-based orthomodular dynamic algebras serving as natural counterparts to orthomodular lattices. This correspondence respects orthomodularity, orthocomplementation, and Sasaki projections, preserving the rich algebraic and categorical structures underlying quantum logic.

Beyond theoretical interest, this equivalence offers a robust conceptual framework linking lattice-theoretic and quantale-theoretic approaches to quantum theory. It facilitates transferring results and intuitions between the static and dynamic viewpoints and lays the groundwork for further exploration of quantum-logical phenomena through categorical and algebraic methods.

Future research may extend this framework to specialized lattice classes such as Hilbert lattices, explore computational implementations, and investigate applications to new quantum systems and logic frameworks.

\backmatter

\bmhead{Funding}
This research was funded in part by the Austrian Science Fund (FWF) grant 10.55776/PIN5424624 and the Czech Science Foundation
(GACR) grant 25-20013L.

\end{document}